\numberwithin{equation}{section}
\setlist[enumerate]{itemsep=0pt,label=$(\mathrm{\roman*})$, topsep=5pt}
\setlist[itemize]{itemsep=0pt, topsep=5pt, labelindent=\parindent, leftmargin=*}
\setlist[description]{itemsep=0pt, topsep=5pt, leftmargin=*}
\newcommand{\blue}{}%\color{black}}%{\color{blue}}
\newtheorem{thm}{Theorem}[section]
\newaliascnt{cor}{thm}
\newtheorem{cor}[cor]{Corollary}
\newaliascnt{lem}{thm}
\newtheorem{lem}[lem]{Lemma}
\newaliascnt{prop}{thm}
\newtheorem{prop}[prop]{Proposition}
\newaliascnt{conj}{thm}
\theoremstyle{definition}
\newaliascnt{dfn}{thm}
\newtheorem{dfn}[dfn]{Definition}
\newaliascnt{rem}{thm}
\newtheorem{rem}[rem]{Remark}
\newaliascnt{prob}{thm}
\newaliascnt{ex}{thm}
\newtheorem{ex}[ex]{Example}
\newcommand{\Ab}{\mathrm{Ab}}
\newcommand{\AS}[1]{[\,#1\,)}
\newcommand{\ba}{\mathbf{a}}
\newcommand{\balpha}{\boldsymbol{\alpha}}
\newcommand{\Br}{\operatorname{Br}}
\newcommand{\Coker}{\operatorname{Coker}}
\newcommand{\dlog}{\operatorname{dlog}}
\newcommand{\et}{\mathrm{et}}
\newcommand{\Ext}{\operatorname{Ext}}
\newcommand{\F}{\mathbb{F}}
\newcommand{\Fp}{\F_p}
\newcommand{\Gal}{\operatorname{Gal}}
\newcommand{\Gm}{\mathbb{G}_{m}}
\newcommand{\Ga}{\mathbb{G}_{a}}
\newcommand{\isomto}{\xrightarrow{\simeq}}
\renewcommand{\Im}{\operatorname{Im}}
\newcommand{\ilim}{\varinjlim}
\newcommand{\Id}{\operatorname{id}}
\newcommand{\M}{\mathscr{M}}
\newcommand{\N}{\mathscr{N}}
\newcommand{\otimesZ}{\otimes_{\Z}}
\newcommand{\otimesM}{\overset{M}{\otimes}}
\newcommand{\PF}{$(\mathbf{PF})\,$}
\newcommand{\res}{\operatorname{res}}
\newcommand{\sep}{\mathrm{sep}} 
\newcommand{\surj}{\twoheadrightarrow}
\newcommand{\Spec}{\operatorname{Spec}}
\newcommand{\Symb}{\operatorname{Symb}}
\newcommand{\Tr}{\operatorname{Tr}}
\newcommand{\tr}{\operatorname{tr}}
\newcommand{\W}{\mathbb{W}}
\newcommand{\WS}{\W_S}
\newcommand{\WSOmega}{\WS\Omega}
\newcommand{\wt}[1]{\widetilde{#1}}
\newcommand{\Z}{\mathbb{Z}}
\title[An additive variant of the Differential symbol maps]{An additive variant of the differential symbol maps}
\author[T. Hiranouchi]{Toshiro Hiranouchi}\address[T. Hiranouchi]{Department of Basic Sciences, Graduate School of Engineering, 
Kyushu Institute of Technology, 
1-1 Sensui-cho, Tobata-ku, Kitakyushu-shi, 
Fukuoka 804-8550 JAPAN}
\email{hira@mns.kyutech.ac.jp}
\keywords{Milnor $K$-groups, K\"ahler differentials, MSC2020: 19D45; 19C30}
\begin{document}
\pagenumbering{arabic}
\maketitle

\begin{abstract}
Our investigation focuses on an additive analogue of the Bloch-Gabber-Kato theorem 
which establishes a relation between the Milnor $K$-group of a field 
of positive characteristic 
and a Galois cohomology group of the field. 
Extending the Aritin-Schreier-Witt theory, 
we present an isomorphism from 
the Mackey product associated with the Witt group and the multiplicative groups 
to a Galois cohomology group. 
As a result, we give a new expression for the torsion subgroup of 
the Brauer group of a field, and more generally, the Kato homology groups.
\end{abstract}

\section{Introduction}
\label{sec:intro}

For an arbitrary field $k$ and any positive integer $m$ prime to the characteristic of $k$, 
the \textbf{norm residue isomorphism theorem} 
(\cite{Voe11}, see also \cite{Wei09}) %\cite[Sect.~1.1, Conj.~1]{Kat80}, \cite{Wei09}) 
%(also known as the \textbf{Milnor-Bloch-Kato conjecture}) 
is relating the Milnor $K$-group $K_n^M(k)$ of the field $k$  
and the Galois cohomology group $H^n(k,\Z/m\Z(n)) = H^n_{\et}(\Spec(k),\mu_m^{\otimes n})$. 
Precisely, for any $n\ge 0$, 
the \textbf{Galois symbol map} 
\begin{equation}\label{eq:gs}
	K_n^M(k)/mK_n^M(k) \isomto H^n(k,\Z/m\Z(n))
\end{equation}
is bijective. 
The Milnor $K$-group $K_n^M(k)$ can be replaced by 
the \textbf{Somekawa $K$-group} associated to the multiplicative groups $\Gm$
by using the canonical isomorphism 
\begin{equation}
	\label{thm:Som}
	K(k;\overbrace{\Gm,\ldots,\Gm}^n)\simeq K^M_n(k)
\end{equation}
(\cite[Thm.~1.4]{Som90}). 
The aim of this note is to study the ``additive analogue'' of the norm residue isomorphism theorem 
by replacing the first $\Gm$ in the Somekawa $K$-group $K(k;\Gm,\ldots,\Gm)$ 
with the additive group $\Ga$ or the Witt group $W_r$ more generally. 
For the case $n=1$, 
the Galois symbol map is the Kummer theory 
\[
k^{\times}/(k^{\times})^m \xrightarrow{\simeq} H^1(k,\Z/m\Z(1)).
\]
The additive analogue should be the Artin-Schreier-Witt theory
\begin{equation}\label{eq:ASW}
	W_r(k)/\wp(W_r(k)) \xrightarrow{\simeq} H^1(k,\Z/p^r\Z)
\end{equation}
for a field $k$ of characteristic $p>0$, 
where $\wp((x_0,\ldots,x_{r-1})) = (x_0^p,\ldots,x_{r-1}^p) - (x_0,\ldots,x_{r-1})$. 
Unfortunately, it has not been given that 
the Somekawa type $K$-group 
associated to $W_r$ and $\Gm$'s over an arbitrary field $k$ 
 (cf.~\cite{Hir14}, \cite{IR17}, \cite{RSY22}). 
However, after tensoring with $\Z/m\Z$, {\blue for the expression of the Milnor $K$-group in \eqref{thm:Som}}
there is no need to use the Somekawa $K$-group. 
To explain this fact, 
we recall that 
the Somekawa $K$-group is defined to be the quotient of 
$\bigoplus_{K/k:\,\mathrm{finite}} (K^{\times})^{\otimesZ n}$ 
by two kind of relations, resembling the projection formula 
and the Weil reciprocity law of the Milnor $K$-groups: 
\[
K(k;\overbrace{\Gm,\ldots,\Gm}^n) := \left.\Bigg(\bigoplus_{K/k:\,\mathrm{finite}} (K^{\times})^{\otimesZ n}\Bigg)\middle/\ \begin{matrix}(\text{projection formula}) \\ \&\ (\text{Weil reciprocity})\end{matrix}\right..
\]
The \textbf{Mackey product} $\Gm^{\otimesM n}(k) = (\overbrace{\Gm\otimesM\cdots\otimesM\Gm}^n)(k)$ of $\Gm$'s defined upon the same generators as the Somekawa $K$-group 
with the relation concerning the projection formula only:  
\begin{equation}
\label{def:Gmn} 	
\Gm^{\otimesM n}(k) := \left.\Bigg(\bigoplus_{K/k:\,\mathrm{finite}} (K^{\times})^{\otimesZ n}\Bigg)\middle/\ (\text{projection formula})\right..
\end{equation}
(for the precise definition, 
see \autoref{def:otimesM}).
By the very definition, 
%In fact, 
%for any field $k$, 
the Somekawa $K$-group is a quotient of the Mackey product
$\Gm^{\otimesM n}(k)\surj K(k;\Gm,\ldots,\Gm)$. 
This quotient map 
becomes bijective  
\begin{equation}\label{eq:Kahn}
	\Gm^{\otimesM n}(k)/m  \xrightarrow{\simeq} K(k;\overbrace{\Gm,\ldots,\Gm}^n)/m 
\end{equation}
after considering modulo $m$ (Kahn's theorem, cf.~\autoref{thm:Kahn}).  
It is important to note here that the Mackey product  
$(W_r \otimesM \Gm^{\otimesM (n-1)}) (k)$ is defined 
for an arbitrary field $k$ in the same way as in \eqref{def:Gmn}. 
Using this,  
we present an isomorphism below 
which extends the Artin-Schreier-Witt theory \eqref{eq:ASW}.

\begin{thm}[\autoref{thm:bij}]\label{thm:main}
	Let $k$ be an arbitrary field of characteristic $p>0$.  
	For $r\ge 1$ and $n\ge 1$, 
	there is an isomorphism
	\[
	(W_r \otimesM \Gm^{\otimesM (n-1)}) (k)/\wp \isomto H^n(k,\Z/p^r\Z(n-1)) = H^{1}_{\et}(\Spec(k), W_r\Omega_{\log}^{n-1}),
	\]
	where 
	$(W_r \otimesM \Gm^{\otimesM (n-1)}) (k)/\wp$ 
	is the cokernel of a map 
	$\wp: (W_r \otimesM \Gm^{\otimesM (n-1)}) (k)\to (W_r \otimesM \Gm^{\otimesM (n-1)}) (k)$ 
	defined by $\wp:W_r(K)\to W_r(K)$ on the Witt groups used in \eqref{eq:ASW}.
\end{thm}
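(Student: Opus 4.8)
The plan is to realise the isomorphism as a ``norm residue'' symbol map built from Artin--Schreier--Witt theory in the Witt slot and Kummer/$\dlog$ theory in the $\Gm$-slots, and then to prove bijectivity by identifying both sides with one and the same de Rham--Witt cokernel. First I construct the map. For a finite extension $K/k$ and a generator $a\otimes b_1\otimes\cdots\otimes b_{n-1}$ of $W_r(K)\otimesZ(K^{\times})^{\otimesZ(n-1)}$, set
\[
\Phi_K(a\otimes b_1\otimes\cdots\otimes b_{n-1}):=\Cor_{K/k}\!\bigl(\delta_K(a)\cup\{b_1\}\cup\cdots\cup\{b_{n-1}\}\bigr)\in H^n(k,\Z/p^r\Z(n-1)),
\]
where $\delta_K\colon W_r(K)/\wp\isomto H^1(K,\Z/p^r\Z)=H^1(K,\Z/p^r\Z(0))$ is the Artin--Schreier--Witt isomorphism \eqref{eq:ASW}, $\{b_i\}=\dlog[b_i]\in H^1(K,\Z/p^r\Z(1))=H^0(K,W_r\Omega^1_{\log})$ is the class of $b_i$ under Kummer theory together with the $n=1$ case of Bloch--Gabber--Kato ($K^{\times}/(K^{\times})^{p^r}\isomto W_r\Omega^1_{K,\log}$), and $\cup$ combines the \'etale cup product with the pairing $\Z/p^r\Z(i)\otimes\Z/p^r\Z(j)\to\Z/p^r\Z(i+j)$. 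Because cup product and corestriction satisfy the projection formula, the $\Phi_K$ are compatible with the transfer relations defining the Mackey product, so they glue to $\Phi\colon(W_r\otimesM\Gm^{\otimesM(n-1)})(k)\to H^n(k,\Z/p^r\Z(n-1))$; and since $\delta_K$ already annihilates $\wp(W_r(K))$, the map $\Phi$ descends to the map $\overline\Phi$ of the theorem.

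Next I identify the target. Using the Bloch--Gabber--Kato/Illusie exact sequence of \'etale sheaves on $\Spec(k)$
\[
0\to W_r\Omega^{n-1}_{\log}\to W_r\Omega^{n-1}\xrightarrow{\ 1-C^{-1}\ }W_r\Omega^{n-1}\big/dV^{r-1}\Omega^{n-2}\to 0
\]
together with the vanishing $H^i_{\et}(\Spec(k),W_r\Omega^{n-1})=0$ for $i\ge1$ (additive Hilbert~90: since $k^{\sep}/k$ is separable, $W_r\Omega^{n-1}_{k^{\sep}}=W_r(k^{\sep})\otimes_{W_r(k)}W_r\Omega^{n-1}_{k}$, which is cohomologically trivial for $G_k$ by the normal basis theorem and d\'evissage along the $V$-filtration), one gets a canonical isomorphism
\[
H^n(k,\Z/p^r\Z(n-1))=H^1_{\et}(\Spec(k),W_r\Omega^{n-1}_{\log})\ \simeq\ \Coker\!\Bigl(1-C^{-1}\colon W_r\Omega^{n-1}_{k}\to W_r\Omega^{n-1}_{k}\big/dV^{r-1}\Omega^{n-2}_{k}\Bigr).
\]
Tracing through the construction, $\overline\Phi$ becomes the map sending the class of $a\otimes b_1\otimes\cdots\otimes b_{n-1}$ (on $K/k$) to the class of $\Tr_{K/k}\!\bigl(a\,\dlog[b_1]\cdots\dlog[b_{n-1}]\bigr)$; here one uses the projection formula $\Tr_{K/k}(x\cdot\dlog[b])=\Tr_{K/k}(x)\cdot\dlog[b]$ for $b\in k^{\times}$ on de Rham--Witt forms --- precisely the transfer relation of the Mackey product --- and the identity $(1-C^{-1})\bigl(x\,\dlog[b_1]\cdots\dlog[b_{n-1}]\bigr)=-\wp(x)\,\dlog[b_1]\cdots\dlog[b_{n-1}]$, valid since $C^{-1}$ fixes $\dlog$-forms and restricts to the Witt--Frobenius on $W_r$, which re-proves well-definedness.

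For surjectivity, recall Illusie's presentation: for $n-1\ge1$ the abelian group $W_r\Omega^{n-1}_{k}$ is generated by symbols $V^s([a_0])\,\dlog[a_1]\cdots\dlog[a_{n-1}]$ and by $dV^s([a_0])\,\dlog[a_1]\cdots\dlog[a_{n-2}]$ ($a_0\in k$, $a_i\in k^{\times}$, $0\le s\le r-1$). The first kind lies in the image of $\overline\Phi$ already with $K=k$, since every element of $W_r(k)$ is a sum of terms $V^s([a_0])$. For the second kind, the projection formula for $V$ gives $dV^{r-1}([a_0])\,\dlog[a_1]\cdots\dlog[a_{n-2}]=d\bigl(V^{r-1}(a_0\,\dlog a_1\cdots\dlog a_{n-2})\bigr)\in dV^{r-1}\Omega^{n-2}_{k}$, hence $0$ in the cokernel; and for $s<r-1$ one absorbs $dV^s([a_0])\,\dlog[a_1]\cdots\dlog[a_{n-2}]$ into a first-kind symbol modulo $(1-C^{-1})W_r\Omega^{n-1}_{k}$ by a short computation with the relations among $C^{-1},F,V,d$. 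Hence $\overline\Phi$ is surjective.

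It remains to invert $\overline\Phi$, and this is where I expect the real work to lie. The candidate inverse sends $a\,\dlog[b_1]\cdots\dlog[b_{n-1}]$ to the class of $a\otimes b_1\otimes\cdots\otimes b_{n-1}$ modulo $\wp$; to make this well defined one must show that, \emph{after} passing to the quotient by $\wp$, the Mackey product $(W_r\otimesM\Gm^{\otimesM(n-1)})(k)$ satisfies all the defining relations of $W_r\Omega^{n-1}_{k}$ --- most importantly the Steinberg relation $\dlog[b]\wedge\dlog[b]=0$, the alternating/Leibniz relations among the $\dlog$'s, and the $F$- and $V$-relations used above. This is the exact additive analogue of Kahn's theorem \eqref{eq:Kahn} (\autoref{thm:Kahn}): the Mackey product itself carries \emph{no} such relations, and they appear only modulo $\wp$. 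I would attack it with the circle of methods behind the multiplicative statement --- transfer and specialisation arguments in the spirit of Bass--Tate and Rost, reduction to the cases $n-1\le2$ and to residue computations over $k(t)$, using the Artin--Schreier--Witt sequence to rewrite each offending element as a $\wp$-boundary, with the reciprocity-sheaf formalism of \cite{RSY22} supplying the needed transfer structure. Granting this lemma, the candidate descends to a homomorphism
\[
W_r\Omega^{n-1}_{k}\big/\bigl((1-C^{-1})W_r\Omega^{n-1}_{k}+dV^{r-1}\Omega^{n-2}_{k}\bigr)\ \longrightarrow\ (W_r\otimesM\Gm^{\otimesM(n-1)})(k)/\wp
\]
inverse to $\overline\Phi$, and the theorem follows.
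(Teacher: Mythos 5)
Your forward map agrees with the paper's $t^n_{k,p^r}$ (the paper writes it as $\set{\ba,b_1,\ldots,b_{n-1}}_{K/k}\mapsto \Tr_{K/k}(\ba\,\dlog[b_1]\cdots\dlog[b_{n-1}])$ using R\"ulling's trace on de Rham--Witt forms rather than corestriction in Galois cohomology), and your identification of the target with the cokernel of $\wp=F-\Id$ and the resulting surjectivity are sound; the paper gets surjectivity at once from Kato's presentation of $H^n_{p^r}(k)$ as $(W_r(k)\otimesZ(k^{\times})^{\otimesZ(n-1)})/J$ with only three relations. The gap is that the entire content of the theorem --- injectivity --- is deferred to an unproven ``lemma'', and the plan you sketch for it is not the one that works. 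You propose to verify, in the Mackey product modulo $\wp$, the full set of relations of $W_r\Omega_k^{n-1}$ (Steinberg, alternating, $F$- and $V$-relations) by Rost/Bass--Tate specialisation and residue computations over $k(t)$. This both overshoots and undershoots. It overshoots because, thanks to Kato's presentation, one only needs three relations --- (a) $b_i=b_j$, (b) $(0,\ldots,a,\ldots,0)\otimes a\otimes\cdots$, (c) $\wp$-boundaries --- and the paper checks these by short symbol manipulations (a purely inseparable extension $k(\sqrt[p]{b})$ plus the projection formula for (a), an Artin--Schreier extension $k(\alpha)$ with $\wp(\alpha)=a$ for (b), the definition of the quotient for (c)). It undershoots because, even granting your lemma, your candidate inverse is defined on, and inverts $\overline{\Phi}$ on, only the subgroup of symbols defined over $k$ itself. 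To conclude you must show that every symbol $\set{\alpha,\beta_1,\ldots,\beta_{n-1}}_{K/k}$ with $[K:k]>1$ already lies in that subgroup. That is the real work of the paper: it first reduces to $k=k^{(p)}$ (pro-$p$ absolute Galois group) by a transfer--restriction argument, then proves an additive Bass--Tate statement for a degree-$p$ extension $K/k$ (every class in $\Omega_K^{n-1}/\mathrm{d}\Omega_K^{n-2}$ is a sum of classes with at most one entry outside $k$), after which the projection formula pushes everything down to symbols over $k$. Nothing in your sketch supplies this step.

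A second structural omission: you attempt all $r$ at once, but the relation-checking above only works cleanly for $r=1$. The paper handles $r\ge 2$ by induction along $0\to W_r\to W_{r+1}\to\Ga\to 0$ and the corresponding short exact sequence of logarithmic de Rham--Witt sheaves, and it needs the connecting map $H^{n-1}(k,\Z/p(n-1))\to H^n_{p^r}(k)$ to vanish; this is deduced from the surjectivity of $K^M_{n-1}(k)/p^{r+1}\to K^M_{n-1}(k)/p$ together with the Bloch--Gabber--Kato theorem. Your proposal has no substitute for either the induction or this vanishing, so even with your lemma granted the case $r\ge 2$ would remain open.
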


The above theorem should be compared to 
the Bloch-Gabber-Kato theorem which is  
a \emph{$p$-analogue} of the norm residue isomorphism theorem.

\begin{thm}[{Bloch-Gabber-Kato, \cite[Cor.~2.8]{BK86}}]\label{thm:BKG}
	Let $k$ be an arbitrary field of characteristic $p>0$.  
	For any $r \ge 1$ and $n \ge 0$, the \textbf{differential symbol map} $($cf.~\eqref{eq:ds}$)$
	\[
	K_n^M(k)/p^rK_n^M(k) \isomto H^n(k,\Z/p^r\Z(n)) = W_r\Omega_{k,\log}^n
	\]
	is an isomorphism.
\end{thm}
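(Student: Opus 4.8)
The plan is to analyse the differential symbol map $\rho \colon K_n^M(k)/p^r \to W_r\Omega^n_{k,\log}$, $\rho(\{a_1,\dots,a_n\}) = \dlog[a_1]\cdots\dlog[a_n]$ with $[a]\in W_r(k)$ the Teichm\"uller representative of $a\in k^\times$, and to show it is bijective. Two points are preliminary. First, $\rho$ is well defined: it is multilinear because $[a][b]=[ab]$, and the Steinberg relation $\dlog[a]\,\dlog[1-a]=0$ in $W_r\Omega^2_k$ follows from the Witt-vector identity $[a]+[1-a]\equiv[1]\bmod V W_{r-1}(k)$ together with the de Rham--Witt relations (Kato's computation; characteristic $2$ needs a separate but routine check). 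Second, $\rho$ is surjective: by the definition of the right-hand side, $H^n(k,\Z/p^r\Z(n)) = H^0_{\et}(\Spec k, W_r\Omega^n_{\log})$, and by Galois descent of $W_r\Omega^n_{\log}$ along $k^{\sep}/k$, the group $W_r\Omega^n_{k,\log}$ is already generated by the forms $\dlog[x_1]\cdots\dlog[x_n]$ with $x_i\in k^\times$. So the real content is the injectivity of $\rho$.

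First I would reduce to $r=1$ by d\'evissage. Izhboldin's theorem that $K_n^M(k)$ has no nonzero $p$-torsion yields a short exact sequence
\[
0\longrightarrow K_n^M(k)/p \xrightarrow{\;p^{r-1}\;} K_n^M(k)/p^r \longrightarrow K_n^M(k)/p^{r-1}\longrightarrow 0 ,
\]
and the structure theory of the logarithmic de Rham--Witt groups over a field (Illusie, Gros--Suwa) provides the matching exact sequence
\[
0\longrightarrow \Omega^n_{k,\log} \longrightarrow W_r\Omega^n_{k,\log} \xrightarrow{\;R\;} W_{r-1}\Omega^n_{k,\log}\longrightarrow 0 ,
\]
with $R$ the restriction. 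Since $\rho$ carries the first sequence into the second --- restricting to the $r=1$ symbol on the subobjects and inducing the $(r-1)$-symbol on the quotients --- the five lemma reduces the assertion for $(r,n)$ to the cases $(1,n)$ and $(r-1,n)$; by induction on $r$ it therefore suffices to treat $r=1$.

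It remains to prove the Bloch--Kato--Gabber isomorphism $\dlog\colon K_n^M(k)/p \isomto \Omega^n_{k,\log}$. Milnor $K$-theory, K\"ahler differentials and the logarithmic subgroup $\Omega^n_{\log}$ all commute with filtered colimits of fields, so one may assume $k$ finitely generated over $\F_p$, hence $[k:k^p]=p^d<\infty$, and, using a separating transcendence basis over the perfect field $\F_p$, one may further assume $k$ finite separable over $\F_p(t_1,\dots,t_d)$. I would then induct on $d$. For $d=0$ the field $k$ is finite, hence perfect, so $\Omega^n_k=0$ and $K_n^M(k)$ is $p$-divisible for $n\ge1$, while both sides agree for $n=0$; so $d=0$ is immediate. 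For the step, set $F_0=\F_p(t_1,\dots,t_{d-1})$ and compare Milnor's localization sequence for $K_\bullet^M$ of the rational function field $F_0(t)/F_0$ --- residues along the closed points of $\mathbb{A}^1_{F_0}$, whose residue fields have transcendence degree $<d$ --- with the corresponding residue sequence for $\Omega^\bullet_{\log}$ obtained from the Cartier-operator analysis of differentials on $\mathbb{A}^1_{F_0}$; this settles $F_0(t)$, and one then passes to the finite separable extension $k/F_0(t_d)$ by a transfer argument for the prime-to-$p$ part of the degree and by a direct computation relative to the common $p$-basis for the degree-$p$ Artin--Schreier part. I expect this last step --- injectivity when $r=1$ --- to be the genuine obstacle: it demands controlling $K_n^M(k)/p$ through a $p$-basis and matching the residue and transfer structures on the Milnor and on the differential side, which is the technical core of Bloch--Kato--Gabber; by contrast the Steinberg relation, surjectivity, and the d\'evissage in $r$ are comparatively formal once Izhboldin's torsion-freeness and the structure theory of $W_r\Omega^n_{\log}$ are granted.
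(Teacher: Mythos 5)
First, a point of orientation: the paper offers no proof of this statement at all --- it is imported verbatim from \cite[Cor.~2.8]{BK86} and used as a black box (in the proofs of \autoref{thm:Kahn} and \autoref{thm:bij}), so there is nothing internal to compare your sketch against; what follows assesses it on its own terms. The outer layers are sound: well-definedness via the Witt-vector form of the Steinberg relation, reduction to finitely generated fields by filtered colimits, and the d\'evissage from $p^r$ to $p$ via the two short exact sequences are all standard. Two small remarks there: the identification $H^0(k,W_r\Omega^n_{k^{\sep},\log})=W_r\Omega^n_{k,\log}$ is not formal ``Galois descent of a generating set'' and itself requires the four-term exact sequence defining $H^n_{p^r}$ together with a Hilbert-90-type input; and you do not actually need Izhboldin's torsion-freeness theorem, since with the bottom row exact and the two outer vertical maps bijective a four-lemma chase gives injectivity of the middle map even when the top row is only right exact --- this is precisely how the paper runs the analogous diagram \eqref{diag:BGK}, in the opposite logical direction.

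The genuine gap is the injectivity modulo $p$, and your own framing concedes it. The proposed induction on the transcendence degree $d$ cannot be closed at the step ``pass to the finite separable extension $k/\F_p(t_1,\ldots,t_d)$'': restriction--corestriction transfers injectivity only along extensions of degree prime to $p$, and it transfers it \emph{downward} (from the larger field to the smaller), whereas here you must go \emph{upward} from the rational function field to $k$ through subextensions of degree $p$ --- Artin--Schreier extensions, which are exactly the hard case of the theorem; there is no formal mechanism for this. Likewise, exactness in the middle of the residue sequence for $\Omega^n_{\log}$ on $\mathbb{A}^1_{F_0}$ over the \emph{imperfect} base $F_0$ is not an off-the-shelf Cartier-operator computation: it is a Gersten-type purity statement which in the literature (Gros--Suwa) is \emph{deduced from} the Bloch--Gabber--Kato theorem, not used to prove it, so invoking it here is circular. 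The actual proof in \cite{BK86} avoids any induction on transcendence degree: it works with an arbitrary field directly, filtering $K_n^M(k)/p$ and $\Omega^n_k$ by a totally ordered $p$-basis and comparing graded pieces via the inverse Cartier operator. Your sketch correctly maps the periphery of the argument but does not contain a proof of its core.
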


Put $H^n_{p^r}(k) := H^n(k,\Z/p^r\Z(n-1))$. 
In the case $n=2$, 
we have an isomorphism $H^2_{p^r}(k) \simeq \Br(k)[p^r]$, 
where   $\Br(k)[p^r]$ is the $p^r$-torsion part 
 of the Brauer group $\Br(k)$. 
Our theorem (\autoref{thm:main}) above gives 
an expression for the Brauer group $\Br(k)[p^r]$. 
More generally, 
for an excellent scheme $X$ over a field $k$ of characteristic $p>0$ with $[k:k^p]\le p^n$ for some $n$,  
there is a homological complex $KC^n_{\bullet}(X,\Z/p^r\Z)$ of Bloch-Ogus type: 
\[
 \cdots \to \bigoplus_{x\in X_{j}}H^{n+j+1}_{p^r}(k(x))\to \cdots  
\to \bigoplus_{x\in X_{1}}H^{n+2}_{p^r}(k(x)) \to \bigoplus_{x\in X_{0}}H^{n+1}_{p^r}(k(x)), 
\]
where $X_j$ is the set of points $x$ in $X$ with $\dim(\overline{\set{x}}) = j$ 
and $k(x)$ is the residue field at $x$ (\cite{Kat86}). 
Here, the term $\bigoplus_{x\in X_j}H_{p^r}^{n+j+1}(k(x))$ is placed in degree $j$. 
The \textbf{Kato homology group} of $X$ (with coefficients in $\Z/p^r\Z$) is defined 
to be the homology group $KH_j(X,\Z/p^r\Z) := H_j(KC_{\bullet}^n(X,\Z/p^r\Z))$.
\autoref{thm:main} gives an alternative expression of $KC^n_{\bullet}(X,\Z/p^r\Z)$
(for recent progress on the Kato homology groups, see \cite{Sai10}).

\subsection*{Notation}
Throughout this note, 
for an abelian group $G$ and $m\in \Z_{\ge 1}$,  
we write $G[m]$ and $G/m$ for the kernel and cokernel 
of the multiplication by $m$ on $G$ respectively.

{\blue
\subsection*{Acknowledgements} 
The author was supported by JSPS KAKENHI Grant Number 24K06672.
The authors would like to thank the referee for pointing out some errors in the earlier versions of this paper and suggesting remedy.
}

%%%%%%%%%%%%%%%%%
\section{Mackey functors}\label{sec:Mack}
In this section, we recall the notion of Mackey functors, and their product 
following \cite[Sect.~5]{Kah92a} (see also \cite[Rem.~1.3.3]{IR17}, \cite[Sect.~3]{RS00}).
Throughout this section, let $k$ be a field, 
and $\Ext_k$ the category of field extensions of $k$. 

\begin{dfn}\label{def:Mack}
	A \textbf{Mackey functor} $\M$ over a field $k$ 
	(a cohomological finite Mackey functor over $k$ in the sense of \cite{Kah92a})
    is 
    a covariant functor $\M:\Ext_k\to \Ab$ from $\Ext_k$ to the category of abelian groups $\Ab$ 
    equipped with a contravariant structure for finite extensions $L/K$ in $\Ext_k$
    satisfying the following conditions: 
    For any finite field extension $L/K$ in $\Ext_k$ and a field extension $K'/K$ in $\Ext_k$, 
	the following diagram is commutative: 
    \[
      \xymatrix@C=20mm{
      \M(L) \ar[r]^-{{\bigoplus e_i \res_{L'_i/L}}}\ar[d]_{\tr_{L/K}} & \displaystyle{\bigoplus_{i=1}^n \M(L_i')} \ar[d]^{\sum \tr_{L_i'/K'}} \\
      \M(K) \ar[r]^-{\res_{K'/K}} & \M(K'),
    }
    \]
    where 
	$L\otimes_KK' = \bigoplus_{i=1}^n A_i$ for some local Artinian algebras $A_i$  
    of dimension $e_i$ over the residue field $L_i' := A_i/\mathfrak{m}_{A_i}$.
    Here, for an extension $L/K$ in $\Ext_k$, the map 
    $\res_{L/K}\colon \M(K)\to \M(L)$ is defined by the covariant structure of $\M$ 
    and the contravariant structure gives $\tr_{L/K}\colon \M(K)\to \M(L)$ if $[L:K]<\infty$. 
 
%    $\res_{L/K} := \M_{\ast}(K\hookrightarrow L)$ 
%    and the transfer map 
%    $\tr_{L/K} := \M^{\ast}(K\hookrightarrow L)$ if we have $[L:K]<\infty$. 
	%\item $\tr_{L/K}\circ \res_{L/K} = [L:K]$ for any finite field extension $L/K$ over $k$,  	
    %\end{enumerate}
\end{dfn}

%\begin{rem}
%By considering the Mackey functor as a functor on the category of finite-dimensional $k$-algebras satisfying appropriate conditions, we can show that the category of all Mackey functors is an additive monoidal abelian category. However, in this note, we refrain from developing such general theory and confine ourselves to the point where the ``tensor product'' (defined in \autoref{def:otimesM} below) of two Mackey functors exhibits functoriality (cf.\ \autoref{lem:prod}).	
%\end{rem}

%
%\begin{rem}
%\label{rem:MF}
%Let $\mathrm{fptCor}(k)$ be the additive category of finite disjoint union of a morphisms 
%$\Spec(K)\to \Spec(k)$, where $K$ is a finite field extension over $k$ 
%with morphisms are given by finite correspondences 
%(Thm.~4.1, Kelly, User's guide \cite{Kelly};  \cite[Def.~9.1.2]{CD13}). 
%Any Mackey functor over $k$ induces a contravariant functor 
%from $\mathrm{fptCor}(k)$ to the category of Abelian groups $\mathrm{Ab}$. 
%Conversely, 
%a functor $\mathrm{fptCor}(k)^{\mathrm{op}} \to \mathrm{Ab}$ 
%gives a Mackey functor over $k$ in the above sense.
%The category of functors $\mathrm{fptCor}(k)^{\mathrm{op}} \to \mathrm{Ab}$ 
%forms a Grothendieck abelian category (cf.~\cite[Appendix A]{KY13}) 
%and hence any morphism of Mackey functor $f\colon \M\to \N$ 
%gives the image $\Im(f)$, the cokernel $\Coker(f)$, the kernel $\Ker(f)$ and so on.
%\end{rem}

The category of Mackey functors  
forms a Grothendieck abelian category (cf.~\cite[Appendix A]{KY13}) 
and hence any morphism of Mackey functor $f\colon \M\to \N$, 
that is, a natural transformation, 
gives the image $\Im(f)$, the cokernel $\Coker(f)$ and so on.

\begin{ex}\label{ex:MFexs}
	\begin{enumerate}
	\item 
	For any {\blue endomorphism $f\colon \M\to \M$ of a Mackey functor $\M$}, 
	we denote the cokernel by 
	$\M/f := \Coker(f)$. 
	This Mackey functor is given by 
	\[
	(\M/f)(K) = \Coker\left(f(K)\colon \M(K)\to \M(K)\right).
	\]

	\item 
	A commutative algebraic group $G$ over $k$ 
	induces a Mackey functor by defining $K/k\mapsto G(K)$ for any field extension $K/k$
	 (cf.\ \cite[(1.3)]{Som90}, \cite[Prop.~2.2.2]{IR17}). 
	In particular, 
	the multiplicative group $\Gm$ is a Mackey functor given by 
	$\Gm(K) = K^{\times}$ 
	for any field extension $K/k$. 
	The translation maps are the norm 
	$N_{L/K}\colon L^{\times} \to K^{\times}$ and the inclusion $K^{\times} \to L^{\times}$. 
	For the additive group $\Ga$, 
	the translation maps are 
	the trace map $\Tr_{L/K}:L\to K$ and the inclusion $K\hookrightarrow L$. 
	%More generally, for $r \ge 1$, 
	%let $W_r$ be the Witt group scheme of length $r$. 
	%The trace map $\Tr_{L/K}:W_r(L)\to W_r(K)$ (cf.~\cite[Prop.~A.9]{Rue07}) 
	%and the restriction 
	%$\res_{L/K}:W_r(K)\to W_r(L)$ make the structure 
	%of the Mackey functor on $W_r$. 
	
	\item 
	Recall that, for $n\ge 1$, the Milnor $K$-group $K_n^M(k)$ of a field $k$   
	is the quotient group of $(k^{\times})^{\otimesZ n}$ by the subgroup generated by all elements of the form 
	$a_1\otimes \cdots \otimes a_n$ 
	with $a_i + a_{j} = 1$ for some $i\neq j$. 
	We also put $K_0^M(k) = \Z$. 
	The class of $a_1\otimes \cdots \otimes a_n $ in $K_n^M(k)$ 
	is denoted by $\set{a_1,\ldots, a_n}_{k}$. 
	For an extension $L/K$ in $\Ext_k$, 
	the restriction map $\res_{L/K}:K_n^M(K)\to K_n^M(L)$ 
	and the norm map $N_{L/K}:K_n^M(L)\to K_n^M(K)$ when $[L:K]<\infty$ 
	gives the structure of the Mackey functor  
	$K_n^M$.
	\end{enumerate}
\end{ex}

\begin{dfn}\label{def:otimesM}
	For Mackey functors $\M_1,\ldots, \M_n$ over $k$, the \textbf{Mackey product} $\M_1\otimesM \cdots \otimesM \M_n$ 
	is defined as follows: For any field extension $k'/k$, 
	\begin{equation}
		\label{eq:otimesM}
		(\M_1 \otimesM \cdots \otimesM \M_n ) (k') := 
		\left.\left(\bigoplus_{K/k':\,\mathrm{finite}} \M_1(K) \otimesZ \cdots \otimesZ \M_n(K)\right)\middle/\ (\textbf{PF}),\right. 
	\end{equation}
	where \PF\ stands for the subgroup generated 
	by elements of the following form: 
	\begin{itemize}[leftmargin=!,  align=left]%, nosep]
		\item [\PF]
	For a finite field extension $L/K$ in $\Ext_{k'}$, 
	\[
		 x_1 \otimes \cdots\otimes \tr_{L/K}(\xi_{i_0}) \otimes \cdots \otimes x_{n} - \res_{L/K}(x_1) \otimes \cdots \otimes \xi_{i_0} \otimes \cdots \otimes \res_{L/K}(x_n) \quad 
	\]
	for $\xi_{i_0} \in \M_{i_0}(L)$ and $x_{i} \in \M_i(K)$ for $i\neq i_0$.
	\end{itemize}
\end{dfn}

For the Mackey product  $\M_1\otimesM \cdots \otimesM \M_n$,  
we write $\set{x_1,\ldots ,x_n}_{K/k'}$ 
for the image of $x_1 \otimes \cdots \otimes x_n \in \M_1(K) \otimesZ \cdots \otimesZ \M_n(K)$ in the product 
$(\M_1\otimesM \cdots \otimesM \M_n)(k')$. 
The translation maps
$\tr_{K'/K}\colon  (\M_1\otimesM \cdots \otimesM \M_n)(K) \to (\M_1\otimesM \cdots \otimesM \M_n)(K')$ and 
$\res_{K'/K}\colon (\M_1\otimesM \cdots \otimesM \M_n)(K) \to (\M_1\otimesM \cdots \otimesM \M_n)(K')$ 
are defined as follows (cf.~\cite[Lem.~4.1.2]{IR17}, \cite[Sect.~3.4]{Akh00}):
\begin{align*}
	\tr_{K'/K}(\set{x_1,\ldots ,x_n}_{L'/K'}) &= \set{x_1,\ldots ,x_n}_{L'/K},\quad \mbox{and}\\
	\res_{K'/K}(\set{x_1,\ldots ,x_n}_{L/K})&= \sum_{i=1}^n e_i\set{\res_{L_i'/L}(x_1), \ldots , \res_{L_i'/L}(x_n)}_{L_i'/K'}, 
\end{align*}
where $L\otimes_KK' = \bigoplus_{i=1}^n A_i$ for some local Artinian algebras $A_i$  
of dimension $e_i$ over the residue field $L_i' := A_i/\mathfrak{m}_{A_i}$.
	
\begin{rem}\label{rem:tensor}
	The Mackey product $\M_1\otimesM\cdots \otimesM \M_n$ is a Mackey functor 
	with translation maps defined above (\cite[Lem.~4.1.2]{IR17}). 
	For Mackey functors $\M_1,\M_2,\M_3$ over $k$, 
    $\M_1\otimesM (\M_2\oplus \M_3) \simeq (\M_1\otimesM \M_2)\oplus (\M_1\otimesM \M_3)$ 
    by the same proof of \cite[Prop.~3.5.6]{Akh00} (Note that the Somekawa $K$-group used in \cite{Akh00} is a quotient of the Mackey product).
	Moreover, 
	we have 
	\[
	 \M_1\otimesM\M_2\otimesM\M_3 \simeq (\M_1\otimesM \M_2)\otimesM \M_3 \simeq \M_1\otimesM(\M_2\otimesM \M_3).
	\]
	\end{rem}

\section{de Rham-Witt complex}
In this section, 
we recall the notion of the de Rham-Witt complex 
and its properties following \cite{Rue07}. 
Let $k$ be a field of characteristic $p\ge 0$. 
A \textbf{truncation set} $S \subset \mathbb{N} := \Z_{>0}$ is 
a nonempty subset such that if $s \in S$ and $t\mid s$ then $t\in S$. 
For each truncation set $S$, 
let $\WS(k)$ be the ring of Witt vectors (cf.~\cite[Appendix]{Rue07}). 
The trace map $\Tr_{L/K}:\WS(L)\to \WS(K)$ 
and the restriction 
$\res_{L/K}:\WS(K)\to \WS(L)$ make the structure 
of the Mackey functor on $\WS$ (\cite[Prop.~A.9]{Rue07}). 
%For $r\ge 1$, taking the truncation set $P_r := \set{1,p,p^2,\ldots,p^{r-1}}$, 
%$\W_{P_r}(k) = W_r(k)$ is the classical $p$-typical ring of Witt vectors. 
For a \emph{finite} truncation set $S$, 
the de Rham-Witt complex $\W_S\Omega_k^{\bullet}$ 
is a quotient of the differential graded algebra $\Omega_{\W_S(k)/\Z}^{\bullet}$ 
satisfying 
\begin{equation}\label{eq:Kaehler}
	\W_{\set{1}}\Omega_k^{\bullet}\simeq \Omega_{k/\Z}^{\bullet},\quad \mbox{and}\quad \W_S\Omega_k^0 \simeq \W_S(k),
\end{equation}
and has some natural maps 
\[
	V_n\colon\W_{S/n}\Omega_k^\bullet\to \WSOmega_k^\bullet,\quad \mbox{and}\quad F_n\colon\WSOmega_k^{\bullet}\to \W_{S/n}\Omega^{\bullet}_k, 
\]
where $S/n := \set{s \in \mathbb{N} | ns \in S}$ (cf.~\cite[Prop.~1.2]{Rue07}). 
For a truncation set $T\subset S$, 
the restriction map $\WS(k)\to \W_T(k); \ba = (a_s)_{s\in S}\mapsto \ba|_T := (a_s)_{s \in T}$ 
induces a map of differential graded algebras 
\begin{equation}
\label{eq:RST}	
R^S_T\colon \W_S\Omega_k^{\bullet} \to \W_T\Omega_k^\bullet. 
\end{equation}
For an arbitrary truncation set $S$, 
the restriction \eqref{eq:RST} gives 
the inverse system $(\W_{S_0}\Omega_k^{\bullet})_{S_0\subset S}$, 
where $S_0$
runs through the set of all finite truncation set contained in $S$.
We define 
\begin{equation}\label{eq:WSO}	
	\WSOmega_k^{\bullet} := \varprojlim_{S_0\subset S}\W_{S_0}\Omega_k^{\bullet}.
\end{equation}

For any extension $L/K$ in the category $\Ext_k$,  
the natural homomorphism $\WS(K) \to \WS(L)$ of rings (cf.~\cite[A.1]{Rue07}) 
induces $\Omega_{\WS(K)}^{\bullet}\to \Omega_{\WS(L)}^{\bullet}$ 
and hence the restriction map 
\begin{equation}\label{eq:resWSO}
	\res_{L/K}\colon \WSOmega_K^\bullet\to \WSOmega_L^\bullet.
\end{equation}
Define $pS := S\cup \set{ps | s\in S}$. 
There is a map of differential graded algebras 
\begin{equation}
	\label{eq:pbar}
	\underline{p}\colon\WSOmega_K^{\bullet}\to \W_{pS}\Omega_L^{\bullet}; \omega \mapsto p\wt{\omega},
\end{equation}
where $\wt{\omega}$ is a lift of $\omega$ to $\W_{pS}\Omega_L^{\bullet}$ 
with respect to the map $R_{S}^{pS}$ \eqref{eq:RST} (\cite[Def.~2.5]{Rue07}). 
%For any finite extension $L/K$ in $\Ext_k$, 
%extending both of the trace map $\Omega_L^{\bullet}\to \Omega_K^{\bullet}$ 
%on the K\"ahler differentials (cf.~\cite[Sect.~2]{Kun86}) and the trace $\Tr_{L/K}\colon \W_S(L)\to\W_K(K)$ 
%through the isomorphisms in \eqref{eq:Kaehler}, 
%there is a \textbf{trace map}:
For a finite truncation set $S$, 
the correspondence $K/k\mapsto \WSOmega^n_K$ forms a Mackey functor  
using the trace map below (\cite[Prop.~2.7]{Rue07}). 
%Furthermore, the correspondence $K \mapsto \W_S(K)$ is also Mackey functor 
%for any truncation set $S$ (\cite[Prop.~A.9]{Rue07}).

\begin{thm}[{\cite[Thm.~2.6]{Rue07}, \cite[Thm.~{\blue A}.2]{KP21}}]\label{thm:Tr}
	{\blue Let $L/K$ be a finite field extension in $\Ext_k$ and $S$ a finite truncation set.}	
	Then, there is a map of differential graded $\WSOmega_K^{\bullet}$-modules 
	\[
	\Tr_{L/K}\colon \WSOmega_L^{\bullet}\to \WSOmega_K^{\bullet}
	\]
	satisfying the following properties: 
	
	\begin{enumerate}[label=$(\mathrm{\alph*})$]
	
	\item If the extension $L/K$ is separable, then 
{\blue the isomorphism 
	$\WS(L)\otimes_{\WS(K)} \WS\Omega_K^n \xrightarrow{\simeq} \WSOmega_L^n; \balpha \otimes \omega \mapsto \balpha\omega$} gives 
	the map $\Tr_{L/K}$   
	by  
	\[
	 {\blue \WSOmega_L^n \xleftarrow{\simeq}\WS(L)\otimes\WSOmega_K^n \xrightarrow{\Tr_{L/K}\otimes\Id} \WS(K)\otimes_{\WS(K)}\WSOmega^n_K   \simeq \WSOmega^n_{K}.}
	\] 
	
	\item If $L/K$ is purely inseparable of degree $p$ and 
	$\omega \in \WSOmega_L^n$, 
	then $\Tr_{L/K}(\omega) = R^{pS}_S(\eta)$, 
	where $\res_{L/K}(\eta) = \underline{p}(\omega)$ 
	for some $\eta \in \W_{pS}\Omega_K^n$.
	\item If $K\subset M \subset L$ are finite field extensions, then we have 
	$\Tr_{L/K} = \Tr_{M/K}\circ \Tr_{L/M}$. 
	\item 
	The map $\Tr_{L/K}$ commutes with $V_n,F_n$ and the restriction $R_{T}^S$.
	\end{enumerate}
\end{thm}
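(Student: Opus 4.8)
The plan is to construct $\Tr_{L/K}$ by the familiar device of reducing to two elementary types of extension — finite separable ones and purely inseparable ones of degree $p$ — and then assembling the general transfer by multiplicativity in towers. Every finite extension $L/K$ sits canonically in a tower $K\subseteq K_{s}\subseteq L$ with $K_{s}/K$ separable and $L/K_{s}$ purely inseparable, and $L/K_{s}$ decomposes further into a chain of degree-$p$ steps; so once the two basic cases are available and the behaviour of their composites is understood, property $(\mathrm{c})$ lets us \emph{define} $\Tr_{L/K}$ in general and makes transitivity hold by construction.

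First I would treat the separable case. Since $L/K$ is finite separable, $\WS(K)\to\WS(L)$ is finite étale, hence $\Omega^{1}_{\WS(L)/\WS(K)}=0$ and $\Omega^{\bullet}_{\WS(L)/\Z}\simeq\WS(L)\otimes_{\WS(K)}\Omega^{\bullet}_{\WS(K)/\Z}$; this descends to the de Rham--Witt quotient, yielding the isomorphism $\WS(L)\otimes_{\WS(K)}\WSOmega_{K}^{n}\isomto\WSOmega_{L}^{n}$ of $(\mathrm{a})$. I would then define $\Tr_{L/K}$ to be $\Tr_{L/K}\otimes\Id$ for the Witt-vector trace $\Tr_{L/K}\colon\WS(L)\to\WS(K)$, transported through this isomorphism; it is $\WS(K)$-linear because $\WS(L)$ is finite projective over $\WS(K)$. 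There is no choice to make, so well-definedness is automatic, and the $\WSOmega_{K}^{\bullet}$-linearity and differential compatibility in $(\mathrm{a})$, the commutation with $F_{n},V_{n},R^{S}_{T}$ in $(\mathrm{d})$, and transitivity $(\mathrm{c})$ within separable towers all descend from the corresponding standard properties of the Witt-vector trace together with the relative vanishing $\Omega^{1}_{\WS(L)/\WS(K)}=0$.

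The heart of the matter — and the step I expect to be the main obstacle — is the purely inseparable case of degree $p$, say $L=K(t)$ with $t^{p}=a\in K$, so that $L^{p}\subseteq K$. Here I would set $\Tr_{L/K}(\omega):=R^{pS}_{S}(\eta)$ for any $\eta\in\W_{pS}\Omega_{K}^{n}$ with $\res_{L/K}(\eta)=\underline{p}(\omega)$, where $\underline{p}$ is the map of \eqref{eq:pbar}. This requires two facts: that $\underline{p}(\omega)$ always lies in $\Im\bigl(\res_{L/K}\colon\W_{pS}\Omega_{K}^{n}\to\W_{pS}\Omega_{L}^{n}\bigr)$, and that $R^{pS}_{S}$ annihilates the ambiguity, i.e.\ the kernel of $\res_{L/K}$ on $\W_{pS}\Omega_{K}^{n}$ is contained in $\Ker(R^{pS}_{S})$. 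For the first point I would pass to the additive generators $V^{i_{0}}[b_{0}]\,dV^{i_{1}}[b_{1}]\cdots dV^{i_{n}}[b_{n}]$ of $\WSOmega_{L}^{n}$ and exploit $L^{p}\subseteq K$: then $[b]^{p}=[b^{p}]=\res_{L/K}([b^{p}])$, so $d[b^{p}]=p[b]^{p-1}d[b]$ already lies in $\Im(\res_{L/K})$, and combining this with $p=FV$ and the way $\underline{p},F,V$ interact with the truncation sets $S$ and $pS$ pushes $\underline{p}$ of each generator into the image. For the second point one needs an explicit description of the kernel of $\res_{L/K}$ on $\W_{pS}\Omega_{K}^{\bullet}$, coming from the structure theory of the de Rham--Witt complex of the purely inseparable extension $L/K$, after which one verifies that $R^{pS}_{S}$ kills it; carrying this out while simultaneously tracking $S$ versus $pS$ and the interplay of $R^{pS}_{S},\underline{p},F,V$ is the genuinely technical part. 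As a sanity check, the identity $R^{pS}_{S}\circ\underline{p}=p\cdot\Id$ (valid because $R^{pS}_{S}$ splits the lift used to define $\underline{p}$), together with the commutation of $\underline{p}$ with $\res$, shows $\Tr_{L/K}\circ\res_{L/K}$ equals multiplication by $[L:K]=p$ on $\WSOmega_{K}^{\bullet}$.

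Finally I would assemble the general case. For arbitrary finite $L/K$, define $\Tr_{L/K}$ as the composite along the canonical tower $K\subseteq K_{s}\subseteq L$ of the separable transfer with the purely inseparable transfer built from the degree-$p$ case along a chain; independence of the chosen chain follows from transitivity among purely inseparable degree-$p$ extensions, which is read off directly from the $\underline{p}/R^{pS}_{S}$ formula. Then $(\mathrm{a})$ and $(\mathrm{b})$ are the defining formulas in the respective basic cases; $(\mathrm{c})$ in general reduces, by refining towers and comparing the canonical factorizations of an intermediate $K\subseteq M\subseteq L$, to the transitivity already checked within each type plus the compatibility of the separable and inseparable constructions across a square of extensions; and $(\mathrm{d})$, along with the differential graded $\WSOmega_{K}^{\bullet}$-module (projection-formula) property, follows because $F_{n},V_{n},R^{S}_{T}$ and the module structure are compatible with both basic constructions — through the Witt-vector trace in the separable case, and through the properties of $\underline{p},\res_{L/K},R^{pS}_{S}$ in the inseparable case.
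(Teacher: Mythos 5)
First, a point of comparison: the paper itself does not prove this theorem --- it is imported verbatim from \cite[Thm.~2.6]{Rue07} and \cite[Thm.~A.2]{KP21}, so there is no internal proof to measure your attempt against. Your sketch does reproduce the architecture of R\"ulling's construction: the separable case via the \'etale base-change isomorphism $\WS(L)\otimes_{\WS(K)}\WSOmega_K^n\simeq \WSOmega_L^n$ and the Witt-vector trace, the purely inseparable degree-$p$ case via the formula $\Tr_{L/K}(\omega)=R^{pS}_S(\eta)$ with $\res_{L/K}(\eta)=\underline{p}(\omega)$, and assembly along towers. That is the right strategy.

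As a proof, however, it has genuine gaps at exactly the two points you yourself flag as ``technical'', and these are not routine verifications but the actual content of the theorem. First, in the purely inseparable case you must prove that $\underline{p}(\WSOmega_L^n)\subseteq\Im\bigl(\res_{L/K}\colon \W_{pS}\Omega_K^n\to\W_{pS}\Omega_L^n\bigr)$ and that any two preimages $\eta$ have the same image under $R^{pS}_S$, i.e.\ that $\Ker(\res_{L/K})\cap\,\underline{p}$-relevant classes die under $R^{pS}_S$. Your reduction to generators $V^{i_0}[b_0]\,dV^{i_1}[b_1]\cdots$ only handles the Teichm\"uller part cleanly ($d[b^p]$ is visibly restricted from $K$); the interaction of $\underline{p}$ with $V$ across the passage from $S$ to $pS$, and above all the determination of $\Ker(\res_{L/K})$ on $\W_{pS}\Omega_K^{\bullet}$, are precisely where R\"ulling had to invoke the cycle-theoretic presentation of $\W_S\Omega$ (and where the direct argument in the appendix of \cite{KP21} spends most of its effort). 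Second, the assembly step is not free: the chain of degree-$p$ subextensions of $L/K_s$ is not canonical, and property (c) for an arbitrary intermediate field $M$ requires commuting a separable step past an inseparable one, since the tower $K\subseteq M\subseteq L$ need not refine the canonical tower $K\subseteq K_s\subseteq L$. Independence of the chain and this mixed-square compatibility are asserted but not argued, and they do not follow formally from the two defining formulas. So what you have is a correct outline of the known proof with its two hardest steps left open, which is consistent with the paper's decision to cite rather than reprove the result.
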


In the case where $k$ has \emph{positive characteristic $p>0$}, 
taking the truncation set $P_r := \set{1,p,p^2,\ldots,p^{r-1}}$ for $r\ge 1$, 
we define $W_{r}\Omega_k^{\bullet} := \W_{P_r}\Omega_k^{\bullet}$. 
This is the classical $p$-typical de Rham-Witt complex of $k$ (\cite{Ill79}). 
{\blue In particular, $W_{r}\Omega_k^0 = W_r(k)$ is the ring of $p$-typical Witt vectors of length $r$ in $k$.
It is customary to write  
\begin{align}\label{eq:RSTp}
	V &:= V_p\colon W_{r}\Omega_k^{\bullet} \to W_{r+1}\Omega_k^{\bullet},\notag \\ 
	F &:= F_p\colon W_{r+1}\Omega_k^{\bullet} \to W_{r}\Omega_k^{\bullet},\quad  \mbox{and}\\
	R &:= R^{P_{r+1}}_{P_{r}}\colon W_{r+1}\Omega_k^{\bullet} \to W_{r}\Omega_k^{\bullet} \notag. 
\end{align}
Furthermore, the map $F\colon W_{r+1}\Omega_k^{\bullet} \to W_{r}\Omega_k^{\bullet}$ induces 
\begin{equation}
	\label{eq:F}
	F\colon W_r\Omega_k^n \to W_r\Omega_k^n/\mathrm{d} V^{r-1}\Omega_k^{n-1}
\end{equation}
(\cite[Prop.~3.3]{Ill79}, \cite[Lem.~2.7]{Shi07}).
}
By \cite[Thm.~1.11]{Rue07}, for a finite truncation set $S$, 
there is a canonical decomposition (with respect to $S$) 
\begin{equation}\label{eq:dec}
	\WSOmega_k^n \simeq \prod_{(m,p) = 1}W_{r_m}\Omega_k^n,
\end{equation}
where $r_m := \# (S/m \cap \set{1,p,p^2,\ldots }) +1$. 
There is a natural homomorphism 
\[
\dlog\colon k^{\times} \to W_r\Omega_k^1; b\mapsto \dlog[b] := \mathrm{d}[b]/[b],
\]
where $[b] := (b,0,\ldots, 0) \in W_r\Omega_k^1$. %(\cite[Chap.~I, (3.23.1)]{Ill79}). 

\begin{lem}\label{lem:NTr}
For any $r\ge 1$, and for any finite field extension $L/K$ in $\Ext_k$, 
the following diagram is commutative: 
\[
\xymatrix{
L^{\times} \ar[r]^-{\dlog }\ar[d]_{N_{L/K}} & W_r\Omega_L^1 \ar[d]^{\Tr_{L/K}}\, \\ 
K^{\times} \ar[r]^-{\dlog } & W_r\Omega_K^1.
}
\]	
\end{lem}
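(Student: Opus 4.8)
The plan is to reduce the claim to the two cases in the definition of the trace map (\autoref{thm:Tr}): the separable case and the purely inseparable degree-$p$ case. By property (c) of \autoref{thm:Tr}, the trace is transitive, and the norm $N_{L/K}$ on $\Gm$ is likewise transitive for towers of finite extensions; since any finite extension factors as a tower of separable and purely inseparable steps, and $\dlog$ is natural in the base field, it suffices to verify commutativity of the square in these two special cases.

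For the separable case, I would use part (a) of \autoref{thm:Tr}: the isomorphism $\WS(L)\otimes_{\WS(K)}\WS\Omega_K^n\isomto\WSOmega_L^n$ identifies $\Tr_{L/K}$ with $\Tr_{L/K}\otimes\Id$ on Witt vectors. For $b\in L^\times$ one writes $\dlog[b]=\mathrm{d}[b]/[b]$ and expands using the multiplicativity of the Teichm\"uller lift and the Leibniz rule; the key input is that for the \emph{separable} extension $L/K$ the norm $N_{L/K}(b)$ has Teichm\"uller representative matching $\Tr_{L/K}([b]\,\text{(something)})$ up to the module structure, so that $\Tr_{L/K}(\mathrm{d}[b]/[b])$ computes to $\mathrm{d}[N_{L/K}(b)]/[N_{L/K}(b)]$. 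In practice the cleanest route is to reduce further to the case $L=K(b)$ and compute in terms of the minimal polynomial, or to invoke the compatibility of $\dlog$ with the $\Omega^\bullet$-module structure together with the known formula $\Tr_{L/K}\dlog[b]=\dlog N_{L/K}(b)$ for ordinary K\"ahler differentials (the $r=1$ case, which is classical), and then bootstrap to general $r$ using that $\Tr$ commutes with $R$, $F$, $V$ by part (d) and that $W_r\Omega^1$ is built from $\Omega^1$ via these operators.

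For the purely inseparable case of degree $p$, I would use part (b): $\Tr_{L/K}(\omega)=R^{pS}_S(\eta)$ where $\res_{L/K}(\eta)=\underline{p}(\omega)$. Here $[L:K]=p$, so $N_{L/K}(b)=b^p$ (as $b^p\in K$). One must check that $\eta=\dlog[b^p]=\mathrm{d}[b^p]/[b^p]$, viewed in $\W_{pP_r}\Omega_K^1$, satisfies $\res_{L/K}(\eta)=\underline{p}(\dlog[b])$. Now $\underline{p}$ sends $\omega$ to $p\wt\omega$ for a lift $\wt\omega$ along $R$; and $p\cdot\dlog[b]$ in Witt differentials is computed via the identity $V F=p$ (or $p=\underline{p}$ on $\Omega^0$) together with $F\,\dlog[b]=\dlog[b^p]$-type relations. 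The point is that $\dlog[b]$ lifts to $\dlog[b]$ in the longer truncation set and $\underline{p}(\dlog[b])=p\,\dlog[b]$, which after the Witt-vector identity becomes $\dlog[b^p]$ restricted from $K$; applying $R^{pS}_S$ recovers $\dlog[N_{L/K}(b)]$ in $W_r\Omega^1_K$.

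The main obstacle I anticipate is the purely inseparable case: unwinding the definition of $\Tr_{L/K}$ through $\underline{p}$ and the lift along $R^{pS}_S$ requires care with which truncation set everything lives in, and one needs the precise interaction between $\dlog$, the Teichm\"uller lift, and the operators $\underline{p}$, $R$, $F$, $V$ on the de Rham--Witt complex. The separable case, by contrast, should be essentially formal once the $r=1$ statement (classical: $\Tr\,\mathrm{d}\log b = \mathrm{d}\log N_{L/K}(b)$ for K\"ahler differentials of a separable extension) is granted, since everything is compatible with the module structure and the structural operators. I would therefore spend the bulk of the argument on the degree-$p$ inseparable step, possibly isolating a small lemma computing $\underline{p}(\dlog[b])$ explicitly.
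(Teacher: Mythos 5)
Your proposal follows the same route as the paper: reduce via transitivity of $\Tr$ and $N$ to the separable case (where the identity is read off from the construction of the trace map in \autoref{thm:Tr}(a)) and the purely inseparable degree-$p$ case, which is handled exactly as you describe by computing $\underline{p}(\dlog[\beta]) = p\,\dlog[\beta] = \dlog[\beta^p] = \res_{L/K}(\dlog[N_{L/K}\beta])$ and applying \autoref{thm:Tr}(b) together with $R(\dlog[N_{L/K}\beta]_{r+1}) = \dlog[N_{L/K}\beta]_{r}$. The only simplification over your sketch is that no appeal to $VF=p$ is needed in the inseparable step, since $p\,\dlog[\beta]=\dlog[\beta^p]$ already follows from multiplicativity of the Teichm\"uller lift.
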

\begin{proof}
	For any $\beta\in L^{\times}$, 
	we show the equality 
	\begin{equation}\label{eq:NTr}
		\Tr_{L/K}(\dlog[\beta]) = \dlog[N_{L/K}\beta].
	\end{equation}
	By taking the separable closure of $L/K$, 
	the extension $L/K$ is written as a finite separable extension 
	followed by a purely inseparable extension. 
	Since every finite purely inseparable extension is 
	decomposed as a sequence of extensions of prime degree $p$, 
	it suffices to show the assertion in the cases where 
	the extension $L/K$ is either separable or 
	purely inseparable of degree $p$. 
	In the former case, the equality \eqref{eq:NTr} follows from the construction of the trace map (\cite[Proof of Thm.~2.6]{Rue07}).
	Next, we assume that the extension $L/K$ is purely inseparable of degree $p$. 
	Denoting the Teichm\"uller map as 
	$[-]_{r,K}: K^{\times} \to W_r\Omega_K^1$ in this proof, 
	we obtain 
	\begin{align*}
		\underline{p}(\dlog([\beta]_{r,L})) &= p \dlog( [\beta]_{r+1,L}) \\
		&= \dlog([\beta^p]_{r+1,L})\\
		&= \res_{L/K}(\dlog([N_{L/K}\beta]_{r+1,K}))\quad (\mbox{by $N_{L/K}(\beta) = \beta^p$)}.
	\end{align*}
	By \autoref{thm:Tr} (b), 
	we have
	\[
		\Tr_{L/K}(\dlog([\beta]_{r,L})) = R(\dlog([N_{L/K}\beta]_{r+1,K})) = \dlog([N_{L/K}\beta]_{r,K}),
	\]
	where $R = R^{P_{r+1}}_{P_{r}}$ (cf.~\eqref{eq:RSTp}).
\end{proof}

The subgroup of $W_r\Omega_k^{n}$ generated by 
$\dlog [b_1] \cdots \dlog [b_n]$ for $b_1,\ldots,b_n\in k^{\times}$ 
is denoted by $W_r\Omega_{k,\log}^n$. 
We put 
\begin{equation}\label{def:H}
	H^n(k,\Z/p^r(m)) = H^{n-m}(k,W_r\Omega_{k^{\sep},\log}^m). 
\end{equation}
For any $r \in \Z_{>0}$ and $n \in \Z_{\ge 0}$, the differential symbol map 
\begin{equation}\label{eq:ds}	
s_{k,p^r}^n:K_n^M(k)/p^r \isomto H^n(k,\Z/p^r(n)) = W_r\Omega_{k,\log}^n
\end{equation}
introduced in \autoref{thm:BKG} 
is given by $\set{a_1,\ldots,a_n}_k\mapsto \dlog[a_1]\cdots \dlog[a_n]$.

%%%%%%%%%%%%%%%%%%%%%%%%%%%%%%%%%%%%%%%%%%%%
\section{Mackey products and the Milnor $K$-groups}
\label{sec:Kummer}
Let $k$ be a field of characteristic $p\ge 0$. 
As referred in Introduction, 
B.~Kahn gives an isomorphism 
$\Gm^{\otimesM n}(k)/m \simeq K_n^M(k)/m$ 
between the Mackey product of $\Gm$'s (\autoref{def:otimesM}) and 
the Milnor $K$-group  modulo $m$ 
(\autoref{ex:MFexs} (iii)) 
which is announced in \cite[Rem.~4.2.5 (b)]{RS00} without proof. 
In this section, 
we give a proof of this theorem (\autoref{thm:Kahn}).
%\footnote{
%However, here we use the norm residue isomorphism theorem \eqref{eq:gs}, which was not yet known at the time of \cite{RS00}.
%}. 
 
\begin{prop}
\label{prop:surjm}
	For any $n\ge 0$, and any $m\ge 1$, 
	there is a surjective homomorphism
	$\pi_{k,m}^n:\Gm^{\otimesM n}(k)/m \surj K_n^M(k)/m$.
\end{prop}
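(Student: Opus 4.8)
The plan is to construct $\pi^n_{k,m}$ by first building a homomorphism on the level of the direct sum $\bigoplus_{K/k \text{ finite}} (K^\times)^{\otimes_\Z n}$ and then checking that it kills the projection-formula relations and is compatible with multiplication by $m$. On the summand indexed by a finite extension $K/k$, I would send $a_1 \otimes \cdots \otimes a_n \in (K^\times)^{\otimes_\Z n}$ to $N_{K/k}\{a_1, \ldots, a_n\}_K \in K^M_n(k)$, using the norm (transfer) map on Milnor $K$-theory. This is clearly $\Z$-multilinear in the $a_i$, so it defines a map out of the tensor product, and then we sum over $K$.

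Next I would verify that this map descends to the quotient by $(\mathbf{PF})$. A generating element of $(\mathbf{PF})$ has the form $x_1 \otimes \cdots \otimes \tr_{L/K}(\xi_{i_0}) \otimes \cdots \otimes x_n$ (living in the $K$-summand) minus $\res_{L/K}(x_1) \otimes \cdots \otimes \xi_{i_0} \otimes \cdots \otimes \res_{L/K}(x_n)$ (living in the $L$-summand), where for $\Gm$ the transfer is the norm $N_{L/K}$ and the restriction is the inclusion $K^\times \hookrightarrow L^\times$. Applying the candidate map, the first term goes to $N_{K/k}\{x_1, \ldots, N_{L/K}\xi_{i_0}, \ldots, x_n\}_K$ and the second to $N_{L/k}\{x_1, \ldots, \xi_{i_0}, \ldots, x_n\}_L = N_{K/k} N_{L/K}\{x_1, \ldots, \xi_{i_0}, \ldots, x_n\}_L$. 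By the projection formula for Milnor $K$-theory (Bass--Tate / Kato), $N_{L/K}(\{\res_{L/K}x_1, \ldots, \xi_{i_0}, \ldots, \res_{L/K}x_n\}_L) = \{x_1, \ldots, N_{L/K}\xi_{i_0}, \ldots, x_n\}_K$, so the two images agree and their difference is zero. Hence the map factors through $\Gm^{\otimes^M n}(k)$, and since it is additive it descends further modulo $m$ to give $\pi^n_{k,m}: \Gm^{\otimes^M n}(k)/m \to K^M_n(k)/m$.

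Finally, surjectivity is the easy part: the generator $\{a_1, \ldots, a_n\}_k$ of $K^M_n(k)$ is the image of the symbol $\{a_1, \ldots, a_n\}_{k/k}$ (take $K = k$, where $N_{k/k} = \mathrm{id}$), so $\pi^n_{k,m}$ hits every generator of $K^M_n(k)/m$ and is therefore surjective. The one genuine point requiring care, rather than the construction itself, is the well-definedness step: one must invoke the correct form of the projection formula for Milnor $K$-groups and make sure the bookkeeping of which variables get restricted matches the definition of $(\mathbf{PF})$ in \autoref{def:otimesM}; everything else is formal. I would also remark that $\pi^n_{k,m}$ is natural in $k$ with respect to both $\res$ and $\tr$, i.e.\ a morphism of Mackey functors modulo $m$, which will be used later when comparing with Kahn's theorem.
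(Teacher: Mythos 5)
Your construction is correct, but it takes a different route from the paper. The paper gets the surjection for free from Somekawa's theorem \cite[Thm.~1.4]{Som90}: the Somekawa $K$-group $K(k;\Gm,\ldots,\Gm)$ is by definition a quotient of the Mackey product $\Gm^{\otimesM n}(k)$ (one imposes Weil reciprocity in addition to \PF), and Somekawa's isomorphism $K(k;\Gm,\ldots,\Gm)\simeq K_n^M(k)$ then yields the surjection $\Gm^{\otimesM n}(k)\surj K_n^M(k)$, which survives reduction mod $m$; the explicit formula $\set{a_1,\ldots,a_n}_{K/k}\mapsto N_{K/k}(\set{a_1,\ldots,a_n}_K)$ is only recorded afterwards. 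You instead build that map by hand on $\bigoplus_K (K^{\times})^{\otimesZ n}$ and verify directly that the \PF relations die, using the projection formula $N_{L/K}(\set{\res_{L/K}x_1,\ldots,\xi_{i_0},\ldots,\res_{L/K}x_n}_L)=\set{x_1,\ldots,N_{L/K}\xi_{i_0},\ldots,x_n}_K$ together with transitivity $N_{L/k}=N_{K/k}\circ N_{L/K}$ of the Bass--Tate--Kato norm. This is logically lighter: you need only the existence, transitivity, and projection formula for the Milnor norm, not the full strength of Somekawa's isomorphism (in particular, no appeal to Weil reciprocity), and the argument is self-contained. The paper's version is shorter and consistent with its repeated use of \cite{Som90} elsewhere. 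One small point of care in your approach, which you correctly flag: since $\xi_{i_0}$ sits in an interior slot, invoking the projection formula in its usual one-sided form requires permuting factors, which introduces matching signs on both sides via graded-commutativity of $K_*^M$, so the identity is unaffected. Your closing remark that $\pi^n_{k,m}$ is a morphism of Mackey functors is also consistent with the ``Moreover'' clause of \autoref{thm:Kahn}.
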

\begin{proof}
	By \cite[Thm.~1.4]{Som90}, we have a canonical isomorphism 
	$K(k;\Gm,\ldots ,\Gm) \simeq K_n^M(k)$. 
	As the Somekawa $K$-group is a quotient of the Mackey product $\Gm^{\otimesM n}(k)$, 
	%(as noted in Introduction), 
	there is a surjective homomorphism 
	$\pi_{k,m}^n:\Gm^{\otimesM n}(k)/m \surj K_n^M(k)/m$
	which is given by $\pi_{k,m}(\set{a_1,\ldots,a_n}_{K/k}) = N_{K/k}(\set{a_1,\ldots,a_n}_{K})$, 
	where $N_{K/k}\colon K_n^M(K)\to K_n^M(k)$ is the norm map on the Milnor $K$-groups.
	%Here, the first homomorphism is the natural quotient map. 
\end{proof}

For simplicity, we often abbreviate 
$\res_{L/K}(x)$ in $\Gm(L)/m=L^{\times}/m$ as $x$ 
for $x \in \Gm(K)/m = K^{\times}/m$. 
For a fixed prime number $l$, 
we denote by $\Symb_l(k)$ the subgroup of $\Gm^{\otimesM n}(k)/l$ 
generated by the symbols of the form $\set{a_1,\ldots,a_n}_{k/k}$ for $a_i \in k^{\times}$. 

\begin{lem}\label{lem:SymbK}
	For a prime number $l$, 
	the map $\pi_{k,l}^n\colon \Gm^{\otimesM n}(k)/l \to K_n^M(k)/l$ induces an isomorphism
	$\Symb_l(k)\xrightarrow{\simeq} K_n^M(k)/l$.
\end{lem}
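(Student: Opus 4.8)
The plan is to verify surjectivity directly and then to construct an explicit two-sided inverse on $\Symb_l(k)$.

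\emph{Surjectivity} is immediate: $K_n^M(k)/l$ is generated by the symbols $\{a_1,\dots,a_n\}_k$ with $a_i\in k^\times$, and by the description of $\pi_{k,l}^n$ given in \autoref{prop:surjm} one has $\pi_{k,l}^n(\{a_1,\dots,a_n\}_{k/k})=N_{k/k}(\{a_1,\dots,a_n\}_k)=\{a_1,\dots,a_n\}_k$, so already $\pi_{k,l}^n(\Symb_l(k))=K_n^M(k)/l$.

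\emph{Injectivity.} I would construct a homomorphism $\rho\colon K_n^M(k)/l\to\Symb_l(k)$ with $\rho(\{a_1,\dots,a_n\}_k)=\{a_1,\dots,a_n\}_{k/k}$; since then $\pi_{k,l}^n|_{\Symb_l(k)}$ and $\rho$ both agree with the identity on generating symbols, they are mutually inverse isomorphisms. By the very definition of the Mackey product, $(a_1,\dots,a_n)\mapsto\{a_1,\dots,a_n\}_{k/k}$ is $\Z$-multilinear, hence gives a homomorphism $(k^\times)^{\otimesZ n}\to\Symb_l(k)\subseteq\Gm^{\otimesM n}(k)/l$ vanishing on $l\cdot(k^\times)^{\otimesZ n}$. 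By the presentation of $K_n^M(k)$ recalled in \autoref{ex:MFexs}(iii), the only point left to check is the Steinberg relation in $\Gm^{\otimesM n}(k)/l$: for $a\in k\setminus\{0,1\}$, elements $a_j\in k^\times$, and indices $i\neq i'$ with $a_i=a$ and $a_{i'}=1-a$, one must show $\{a_1,\dots,a_n\}_{k/k}=0$ in $\Gm^{\otimesM n}(k)/l$; granting this, the homomorphism descends to the required $\rho$.

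The Steinberg relation modulo $l$ is the heart of the matter. It need not hold in $\Gm^{\otimesM n}(k)$ itself — that is precisely the difference between the Mackey product and the Somekawa quotient — and the reason it holds after reduction modulo $l$ is the identity $a=N_{E/k}(1-\beta)$ with $\beta^l=1-a$. Set $b:=1-a\in k^\times$. If $b\in(k^\times)^l$, multilinearity in the $i'$-th slot makes the symbol $l$ times another one, hence $0$ modulo $l$. Otherwise $t^l-b$ is irreducible over $k$ — because $l$ is prime, and when $l=\operatorname{char}(k)=p$ because $b\notin(k^\times)^p$ — so $E:=k[t]/(t^l-b)=k(\beta)$ has degree $l$ over $k$ with $\beta^l=b$, and
\[
N_{E/k}(1-\beta)=1-b=a
\]
(evaluate the minimal polynomial $t^l-b$ at $t=1$ when $l\neq p$; and $N_{E/k}(1-\beta)=(1-\beta)^p=1-\beta^p=1-b$ when $l=p$). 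Applying the relation \PF of \autoref{def:otimesM} in the $i$-th slot — with $\tr_{E/k}=N_{E/k}$ on $\Gm$ and $\res_{E/k}(b)=\beta^l$ inside $E^\times$ — the symbol $\{a_1,\dots,a_n\}_{k/k}$ becomes the symbol over $E/k$ obtained from it by putting $1-\beta$ in the $i$-th entry and $\beta^l$ in the $i'$-th entry (all other entries read in $E^\times$ via $\res_{E/k}$), and multilinearity in the $i'$-th slot exhibits this as $l$ times the corresponding symbol with $\beta$ in place of $\beta^l$. Hence $\{a_1,\dots,a_n\}_{k/k}\in l\cdot\Gm^{\otimesM n}(k)$ and vanishes in $\Gm^{\otimesM n}(k)/l$.

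The main obstacle is therefore conceptual rather than computational: to see that the Steinberg relation, though not a consequence of \PF over $\Z$, does follow from \PF modulo $l$ once $a$ is exhibited as the norm of $1-\beta$ from $k(\sqrt[l]{1-a})$. The remaining ingredients — the multilinearity, the irreducibility of $t^l-b$, and the norm computation in the purely inseparable case $l=p$ — are all elementary.
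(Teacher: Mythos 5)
Your proposal is correct and follows essentially the same route as the paper: construct the inverse $K_n^M(k)/l\to\Symb_l(k)$ on generators and reduce everything to checking the Steinberg relation modulo $l$ in $\Gm^{\otimesM n}(k)/l$ via the projection formula \PF together with the norm identity coming from adjoining an $l$-th root. The only (immaterial) difference is that you adjoin a root of $1-a$ and invoke the irreducibility criterion for $T^l-b$, whereas the paper adjoins roots of $a$ and sums over the irreducible factors of $T^l-a$ (using the purely inseparable extension $k(\sqrt[p]{a})$ when $l=p$); both are the same Bass--Tate argument.
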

\begin{proof}
	To show that 
	$\varphi_k: K_n^M(k)/l \to \Gm^{\otimesM n}(k)/l;  
	\set{a_1,\ldots,a_n}_k \mapsto \set{a_1,\ldots ,a_n}_{k/k}$
	is well-defined, 
	we prove $\set{a,1-a, a_2,\ldots,a_n}_{k/k} = 0$ in $\Gm^{\otimesM n}(k)/l$ 
	for some $a \not\in (k^{\times})^l$.  
	First, we consider the case $l\neq p$ (this is a direct consequence of \cite[Lem.~4.2.6]{RS00}).
	%For reader's convenience, we give a proof). 
	%We may assume that $a \not\in (k^{\times})^l$. 
	Let $T^l - a = \prod_{i}f_i(T)$ 
	be the decomposition with monic and irreducible polynomials $f_i(T)$ in $k[T]$. 
	For each $i$, let $\alpha_i \in k^{\sep}$ be a root of $f_i(T)$ 
	and put $K_i = k(\alpha_i)$. 
	Then, 
	\[
	1-a = \prod_{i} f_i(1) = \prod_i N_{K_i/k}(1 - \alpha_i).
	\]
	We have
	\begin{align*}
	&\set{a,1-a,a_2,\ldots,a_n}_{k/k} \\
	&= 
	\sum_i \set{a,N_{K_i/k}(\alpha_i),a_2,\ldots,a_n}_{k/k} \\
	&= 	\sum_i \set{a,\alpha_i, a_2,\ldots, a_n}_{K_i/k}\ (\mbox{by \PF })\\
	&= \sum_i \set{(\alpha_i)^l,\alpha_i,  a_2,\ldots, a_n}_{K_i/k}\quad (\mbox{by $(\alpha_i)^l - a = \prod_{j}f_j(\alpha_i)=0$})\\
	&= 0 \quad \mbox{in $\Gm^{\otimesM n}(k)/l$}.
	\end{align*} 
	%The assertion follows from this. 
	
	Next, we treat the case $p>0$ and $l=p$. 
	Let us consider the purely inseparable extension $K := k(\sqrt[p]{a})/k$ of degree $p$. 
	The norm map gives $N_{K/k}(\sqrt[p]{a}) = (\sqrt[p]{a})^p = a$. 
	Therefore, by the projection formula \PF, we have 
	\begin{align*}
		\set{a,1-a,a_2,\ldots,a_n}_{k/k} &= \set{N_{K/k}(\sqrt[p]{a}),1-a,a_2,\ldots,a_n}_{k/k} \\
		&= \set{\sqrt[p]{a},1-a,a_2,\ldots, a_n}_{K/k}.
	\end{align*}
	The equality $1-a = (1-\sqrt[p]{a})^p$ in $K$,
	gives $\set{a,1-a, a_2,\ldots,a_n}_{k/k} = 0$.
\end{proof}

\begin{thm}\label{thm:modl}
	Let $l$ be a prime number. 
	We assume that $k$ contains a primitive $l$-th root of unity when $l\neq p$. 
	Then, for any $n\ge 1$, 
	the map $\pi_{k,l}^n\colon \Gm^{\otimesM n}(k)/l \to K_n^M(k)/l$ is bijective.
\end{thm}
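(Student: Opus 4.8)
The goal is to prove $\pi_{k,l}^n\colon \Gm^{\otimesM n}(k)/l \to K_n^M(k)/l$ is bijective. By \autoref{prop:surjm} it is surjective, so the task is injectivity. The plan is to construct a section: I already have from \autoref{lem:SymbK} the well-defined map $\varphi_k\colon K_n^M(k)/l \to \Gm^{\otimesM n}(k)/l$ sending $\{a_1,\ldots,a_n\}_k \mapsto \{a_1,\ldots,a_n\}_{k/k}$, with image $\Symb_l(k)$, and $\pi_{k,l}^n\circ\varphi_k = \mathrm{id}$. So it suffices to show that $\varphi_k\circ\pi_{k,l}^n = \mathrm{id}$, equivalently that $\Symb_l(k) = \Gm^{\otimesM n}(k)/l$, i.e. every symbol $\{a_1,\ldots,a_n\}_{K/k}$ with $K/k$ finite already lies in the base-field-generated subgroup $\Symb_l(k)$.

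The key step is a reduction-of-the-field argument. Given $\{a_1,\ldots,a_n\}_{K/k}$, I would induct on $[K:k]$. Writing $K/k$ as a tower, it suffices to handle the case where $K/k$ is primitive, $K = k(\alpha)$, and further to reduce to $K/k$ either separable of prime degree or purely inseparable of degree $p$ (using \autoref{thm:Tr}(c)-type transitivity of the transfer and multiplicativity). The heart is then: express $a_1\in K^\times$ (or rather push the whole symbol down) using the projection formula \PF\ together with the hypothesis on roots of unity, mimicking the computation already carried out in \autoref{lem:SymbK}. Concretely, when $l\neq p$ and $\zeta_l\in k$, one uses that $N_{K/k}$ on $K^\times$ and the Bass–Tate / Milnor argument for $K_n^M$ of a simple extension let one rewrite $\{a_1,\ldots,a_n\}_{K/k}$ as a sum of symbols of lower-degree extensions plus base-field symbols; the roots-of-unity hypothesis is exactly what makes the "norm" relations in $K_n^M(K)$ expressible via \PF\ in the Mackey product. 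The purely inseparable degree-$p$ case is handled as in \autoref{lem:SymbK}, using $N_{K/k}(\sqrt[p]{a}) = a$ and that $K^\times$ is generated over $k^\times$ by $p$-th roots.

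Alternatively — and this may be cleaner — I would use the known structure of $K_n^M(k)/l$: by the Bass–Tate presentation, $K_n^M(K)$ for $K=k(\alpha)$ is generated by symbols involving at most one "genuinely new" element, and the norm $N_{K/k}$ is computed by the Milnor exact sequence / specialization maps. Combining this with the fact (from \autoref{thm:Som}, Somekawa) that $K(k;\Gm,\ldots,\Gm)\simeq K_n^M(k)$, the map $\pi_{k,l}^n$ factors as $\Gm^{\otimesM n}(k)/l \twoheadrightarrow K(k;\Gm,\ldots,\Gm)/l \xrightarrow{\sim} K_n^M(k)/l$, so injectivity of $\pi_{k,l}^n$ is equivalent to the first surjection being an isomorphism mod $l$ — this is precisely the mod-$l$ case of Kahn's theorem \eqref{eq:Kahn}. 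I would therefore aim to prove directly that the Weil-reciprocity relations defining the Somekawa quotient become redundant modulo $l$ in the Mackey product, again via the root-of-unity trick: each Weil reciprocity relation $\sum_v \partial_v(\ldots) = 0$ on $\mathbb{P}^1_K$ can be absorbed using \PF\ once $l$-th roots are available, since the tame symbols land in groups killed by the relevant norm relations.

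The main obstacle I expect is the \emph{purely inseparable / wild} part when $l = p$ and $K/k$ is inseparable of degree $p$ with $n\geq 2$: here $K^\times/(K^\times)^p$ is larger than the image of $k^\times$, and one must check that the extra generators $\{\sqrt[p]{a}, b_2,\ldots,b_n\}_{K/k}$ — which do not obviously come from base-field symbols — can still be rewritten, using that $\{\sqrt[p]{a},a\}_K = \{\sqrt[p]{a},(\sqrt[p]{a})^p\}_K = p\{\sqrt[p]{a},\sqrt[p]{a}\}_K = \{\sqrt[p]{a}, -1\}_K$ and such identities, to reduce the "new" slot. Making the bookkeeping of \PF\ interact correctly with these Milnor-$K$ identities, without circularity against the very statement being proven, is the delicate point; I would isolate it as a separate lemma about $\Gm^{\otimesM 2}(K)/p$ for $K/k$ purely inseparable and bootstrap to general $n$ via \autoref{rem:tensor}.
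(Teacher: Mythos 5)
Your overall framework coincides with the paper's: surjectivity comes from \autoref{prop:surjm}, the section $\varphi_k$ with $\pi_{k,l}^n\circ\varphi_k=\Id$ comes from \autoref{lem:SymbK}, and everything reduces to showing $\Symb_l(k)=\Gm^{\otimesM n}(k)/l$. The problem is that the core of the argument --- actually rewriting an arbitrary symbol $\set{\alpha_1,\ldots,\alpha_n}_{K/k}$ as a sum of symbols defined over $k$ --- is only gestured at, and the routes you sketch have genuine gaps.

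First, your induction cannot get started for a general $k$: a finite extension $K/k$ need not admit a tower of prime-degree subextensions (a separable quintic with Galois group $S_5$ has no proper intermediate fields), so ``reduce to $K/k$ separable of prime degree or purely inseparable of degree $p$'' is not a legitimate reduction, and \autoref{prop:BT} does not apply to a general base field since it requires every finite extension of $k$ to have $l$-power degree. The step you omit, and which the paper uses to fix both problems at once, is the passage to $k^{(l)}$, the fixed field of an $l$-Sylow subgroup of $\Gal(k^{\sep}/k)$: injectivity of $\pi_{k,l}^n$ descends from $k^{(l)}$ to $k$ because $\tr_{K/k}\circ\res_{K/k}$ is multiplication by $[K:k]$, which is prime to $l$ on finite subextensions of $k^{(l)}/k$, while over $k^{(l)}$ every finite extension is a tower of degree-$l$ steps and \autoref{prop:BT} becomes available. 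Second, even granting Bass--Tate, you never bridge the gap between a decomposition of the Milnor symbol $\set{\alpha_1,\ldots,\alpha_n}_{K}$ in $K_n^M(K)/l$ and a decomposition of the Mackey symbol $\set{\alpha_1,\ldots,\alpha_n}_{K/K}$ in $\Gm^{\otimesM n}(K)/l$; the Mackey product carries no Steinberg relation, so Milnor-$K$ identities (including your proposed $\set{x,x}=\set{x,-1}$) are not available there. The paper's key move is to invoke the isomorphism $\Symb_l(K)\simeq K_n^M(K)/l$ of \autoref{lem:SymbK} \emph{over $K$ itself} to transport the Bass--Tate expression back into $\Gm^{\otimesM n}(K)/l$, and only then apply $\tr_{K/k}$ and the projection formula \PF. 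Finally, your ``alternative'' route --- showing the Weil reciprocity relations become redundant modulo $l$ --- is a restatement of the mod-$l$ case of Kahn's theorem, i.e.\ of the statement to be proved, with no mechanism supplied; as written it is circular.
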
 
\begin{proof}%[{Proof of \autoref{thm:modl}}]
	Put $\pi_k:= \pi_{k,l}^n$. 
	From \autoref{lem:SymbK}, 
	we have a map $\varphi_k:K_n^M(k)/l  \to \Gm^{\otimesM n}(k)/l$ 
	defined by 
	$\varphi_k(\set{a_1,\ldots, a_n}_k) = \set{a_1,\ldots, a_n}_{k/k}$ and this gives $\pi_k \circ \varphi_k = \Id$. 
	To show \autoref{thm:modl}, it is sufficient to 
	prove that $\varphi_k$ is surjective or $\pi_k$ is injective.
	%(cf.~the proof of Thm.~1.4 in \cite{Som90}). 	
	
	Let $k^{(l)}/k$ be the field extension 
	corresponding to the $l$-Sylow subgroup of $\Gal(k^{\sep}/k)$. 
	% cf. Bass-Tate CH. I, Sect. 5, p. 40, Kato, LCFT II, p. 627
	The following diagram is commutative:
	\[
	\xymatrix{
	\Gm^{\otimesM n}(k^{(l)})/l \ar[r]^-{\pi_{k^{(l)}}} & K_n^M(k^{(l)})/l \\
	\Gm^{\otimesM n}(k)/l \ar[u]^{\res_{k^{(l)}/k}} \ar[r]^-{\pi_k} & K_n^M(k)/l
	\ar[u]_{\res_{k^{(l)}/k}}.
	}
	\]
	To show that $\pi_k$ is injective, take any  $\xi\in \Gm^{\otimesM n}(k)/l$ 
	with $\pi_k(\xi) = 0$. 
	If we assume that $\pi_{k^{(l)}}$ is bijective, 
	we have $\res_{k^{(l)}/k}(\xi) = 0$ in $\Gm^{\otimesM n}(k^{(l)})/l$. 
	One can write $\Gm^{\otimesM n}(k^{(l)})/l \simeq \ilim_{K}(\Gm^{\otimesM n}/l)(K)$, 
	where $K$ runs through all finite extension $K/k$ within $k^{(l)}$ (for the proof, see \cite[Lem.~3.6.4]{Akh00}). 
	There exists a subextension $k\subset K\subset k^{(l)}$ 
	such that 
	$[K:k]$ is finite and 
	$\res_{K/k}(\xi) = 0$. 
	This implies $[K:k]{\blue \xi} = \tr_{K/k}\circ \res_{K/k}(\xi) = 0$. 
	%In fact, as we have $\Gm^{\otimesM n}(k^{(l)})/l \simeq (\Gm^{\otimesM n}/l)(k^{(l)})$, 
	%the restriction $\res_{k^{(l)}/k}(\xi)$ must be a sum of the relations given in \PF (cf.~\eqref{def:otimesM}). 
	%Any such relation in  $(\Gm^{\otimesM n}/l)(k^{(l)})$ 
	%comes from  $(\Gm^{\otimesM n}/l)(K)$ for some finite extension $K/k$. 
	%because the same holds in the Somekawa $K$-group $K(k^{(l)}; \Gm,\ldots,\Gm)$ 
	%(for the detailed proof, see \cite[Lem.~3.6.4]{Akh00}). 
	Since $[K:k]$ is prime to $l$, 
	the restriction $\res_{K/k}$ is injective so that we obtain $\xi = 0$. 
	Consequently, we may assume 
	$k = k^{(l)}$ and it is left to show that $\varphi_k$ is surjective. 
	Take any symbol $\set{\alpha_1,\ldots, \alpha_n}_{K/k}$ in $\Gm^{\otimesM n}(k)/l$ 
	and we show that this is in $\Symb_l(k)$. 
	There exists a tower of fields 
	$k = k_0 \subset k_1 \subset k_2 \subset \cdots \subset k_r= K
	$
	such that $[k_i:k_{i-1}] = l$. %(cf. Akhtar, thesis, Prop.~2.2.5)
	By induction on $r$, 
	we may assume $[K:k] = l$. 
	It is known that $\set{\alpha_1,\ldots ,\alpha_n}_K \in K_n^M(K)$ 
	is written as a sum of elements of the form 
	$\set{\xi, x_2,\ldots, x_n}_K$ 
	for some $\xi \in K^{\times}$ and $x_2,\ldots, x_n\in k^{\times}$ 
	(\autoref{prop:BT} below). 
	From  \autoref{lem:SymbK}, we have 
	\[
	{\blue \set{\alpha_1,\ldots,\alpha_n}_{K/K} =\sum_i\set{\xi^{(i)},x_2^{(i)},\ldots,x_n^{(i)}}_{K/K}}
	\]
	for some $\xi^{(i)}\in K^{\times}$ and $x_2^{(i)},\ldots,x_n^{(i)}\in k^{\times}$. 
	By the projection formula \PF, 
	we obtain 
	\begin{align*}
	\set{\alpha_1,\ldots,\alpha_n}_{K/k} & = \tr_{K/k} (\set{\alpha_1,\ldots,\alpha_n}_{K/K})\\
	&=\sum_i \tr_{K/k}(\set{\xi^{(i)},x_2^{(i)},\ldots,x_n^{(i)}}_{K/K}) \\
	&=\sum_i \set{\xi^{(i)},x_2^{(i)},\ldots,x_n^{(i)}}_{K/k}  \\
	&= \sum_i \set{N_{K/k}(\xi^{(i)}),x_2^{(i)},\ldots,x_n^{(i)}}_{k/k}.
	\end{align*}
	Namely, $\set{\alpha_1,\ldots,\alpha_n}_{K/k}$ is in $\Symb_l(k)$ 
	and the map $\varphi_k$ is surjective. 
\end{proof}

\begin{prop}[{\cite[Chap.~I, Cor.~5.3]{BT73}, \cite[Prop.~2.2.3]{Akh00}}]\label{prop:BT}
	Let $k$ be a field and $l$ a prime number. 
	Suppose that every finite extension of $k$ is of degree $l^r$ for some $r\ge 0$. 
	Then, for an extension $K/k$ of degree $l$, 
	the group $K_n^M(K)$ is generated by symbols of the form 
	$\set{\xi, x_2,\ldots, x_n}_K$ 
	where $\xi \in K^{\times}$ and $x_2,\ldots, x_n \in k^{\times}$. 
\end{prop}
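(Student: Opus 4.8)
The plan is to produce, from the hypothesis, a convenient set of generators of $K^{\times}$, and then to use the Steinberg relation to reduce the number of ``bad'' entries in a Milnor symbol.

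Since $[K:k]=l$ is prime, $K$ has no proper intermediate field, so $K=k(\alpha)$ for any $\alpha\in K\setminus k$, and the minimal polynomial of $\alpha$ over $k$ has degree $l$; thus every element of $K$ is $g(\alpha)$ for a (unique) $g\in k[T]$ with $\deg g\le l-1$. The crucial consequence of the hypothesis is that every irreducible polynomial in $k[T]$ has degree a power of $l$, so an irreducible polynomial of degree $<l$ must be linear. Hence each nonzero $g\in k[T]$ with $\deg g<l$ factors over $k$ as $g(T)=c\prod_j(T-a_j)$ with $c\in k^{\times}$ and $a_j\in k$, and therefore $K^{\times}$ is generated by $k^{\times}$ together with the elements $\alpha-a$ for $a\in k$ (these are nonzero, as $\alpha\notin k$). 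Consequently $K_n^M(K)$ is generated by symbols each of whose entries lies in $k^{\times}\cup\{\alpha-a:a\in k\}$.

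Next I would cut the number of entries of the form $\alpha-a$ down to at most one. For $a,b\in k$ with $a\neq b$, put $u=(\alpha-a)/(b-a)$; then $1-u=(\alpha-b)/(a-b)$ and $u\neq 0,1$, so the Steinberg relation $\set{u,1-u}=0$ in $K_2^M(K)$ expands to
\[
\set{\alpha-a,\alpha-b}=\set{\alpha-a,\,a-b}+\set{b-a,\,\alpha-b}-\set{b-a,\,a-b}.
\]
By multilinearity of Milnor symbols this identity may be inserted into any two slots of an $n$-fold symbol, so a symbol with two entries $\alpha-a$ and $\alpha-b$ ($a\neq b$) gets rewritten as a sum of symbols in each of which at least one of those two slots now carries an element of $k^{\times}$; when the two entries coincide one uses instead the standard relation $\set{\alpha-a,\alpha-a}=\set{\alpha-a,-1}$. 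In every case the number of entries lying outside $k^{\times}$ strictly drops (the other slots being untouched), so by induction on that number $K_n^M(K)$ is generated by symbols with at most one entry not in $k^{\times}$. Such a symbol, after permuting the slots at the cost of a sign, has the form $\set{\xi,x_2,\dots,x_n}_K$ with $\xi\in K^{\times}$ and $x_2,\dots,x_n\in k^{\times}$, which is the assertion.

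The computations involved — the factorization of the low-degree polynomials over $k$ and the expansion of the Steinberg relation — are routine; the only point needing a little care is arranging the induction so that each rewriting step genuinely lowers the count of entries outside $k^{\times}$, which it does precisely because all but two slots are left unchanged. I do not expect any essential obstacle beyond this bookkeeping.
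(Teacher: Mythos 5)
Your argument is correct. The paper itself gives no proof of this proposition --- it is quoted from Bass--Tate and Akhtar --- and what you write is precisely the standard argument from those references: since every irreducible polynomial over $k$ of degree $<l$ is linear, $K^{\times}$ is generated by $k^{\times}$ and the elements $\alpha-a$, and the Steinberg relation applied to $u=(\alpha-a)/(b-a)$, $1-u=(\alpha-b)/(a-b)$ (together with $\set{x,x}=\set{x,-1}$ for repeated entries) lets one strictly decrease the number of slots lying outside $k^{\times}$ until at most one remains. No gaps.
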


\begin{thm}
\label{thm:Kahn}
	Let $k$ be an arbitrary field of characteristic $p\ge 0$. 
	For any $m\ge 1$ and $n\ge 0$, the map 
	$\pi_{k,m}^n \colon \Gm^{\otimesM n}(k)/m \isomto K_n^M(k)/m$
	is bijective. 
	Moreover, this gives an isomorphism of Mackey functors
	$\Gm^{\otimesM n}/m\simeq K_n^M/m$.
\end{thm}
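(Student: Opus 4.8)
The plan is to bootstrap from \autoref{thm:modl}, which handles the case where $m = l$ is prime and $k$ contains a primitive $l$-th root of unity when $l \neq p$, to the general statement for arbitrary $m$ and arbitrary $k$. First I would reduce to the case $m = l^s$ a prime power, since both $\Gm^{\otimesM n}(k)/m$ and $K_n^M(k)/m$ decompose compatibly over the prime-power divisors of $m$ by the Chinese remainder theorem, and $\pi_{k,m}^n$ respects this decomposition. Next I would remove the root-of-unity hypothesis when $l \neq p$: given $\xi \in \Gm^{\otimesM n}(k)/l^s$ with $\pi_{k,l^s}^n(\xi) = 0$, pass to $k' = k(\zeta_l)$, observe $[k':k]$ divides $l-1$ hence is prime to $l$, so $\res_{k'/k}$ is injective on both sides (split by $\tfrac{1}{[k':k]}\tr_{k'/k}$ after inverting $[k':k]$, which is legitimate since everything is killed by $l^s$), and deduce injectivity over $k$ from injectivity over $k'$; surjectivity of $\pi_{k,m}^n$ is already clear from \autoref{prop:surjm}.

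The remaining core is the prime-power case $m = l^s$ with $s \geq 2$ (and $k \supset \mu_l$ if $l \neq p$). Here I would induct on $s$ using the commutative diagram with exact rows
\[
\xymatrix{
0 \ar[r] & \Gm^{\otimesM n}(k)/l \ar[r] \ar[d]^{\pi_{k,l}^n} & \Gm^{\otimesM n}(k)/l^s \ar[r] \ar[d]^{\pi_{k,l^s}^n} & \Gm^{\otimesM n}(k)/l^{s-1} \ar[r] \ar[d]^{\pi_{k,l^{s-1}}^n} & 0 \\
& K_n^M(k)/l \ar[r] & K_n^M(k)/l^s \ar[r] & K_n^M(k)/l^{s-1} \ar[r] & 0,
}
\]
where the horizontal maps in the top row come from multiplication by $l^{s-1}$ and reduction mod $l^{s-1}$. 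The subtlety is exactness of the top row on the left: for an abelian group $A$ the sequence $A/l \xrightarrow{l^{s-1}} A/l^s \to A/l^{s-1} \to 0$ is right exact but the left map $A/l \to A/l^s$ need not be injective in general; it is injective precisely when the natural surjection $A/l \twoheadrightarrow l^{s-1}A/l^s A$ is an isomorphism, equivalently when $A[l] \to A[l^s]$ hits $l^{s-1}A$-torsion correctly. So I expect the main obstacle to be verifying this exactness for $A = \Gm^{\otimesM n}(k)$, or circumventing it.

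To circumvent it, I would instead argue directly that $\varphi_k \colon K_n^M(k)/m \to \Gm^{\otimesM n}(k)/m$, defined by $\set{a_1,\dots,a_n}_k \mapsto \set{a_1,\dots,a_n}_{k/k}$, is well-defined and surjective for general $m$, mimicking \autoref{lem:SymbK} and \autoref{thm:modl} but working mod $m$ throughout: the Steinberg relation computation in \autoref{lem:SymbK} used only that $a \notin (k^\times)^l$ and the factorization $T^l - a = \prod f_i(T)$, and the analogous computation works mod $m$ using $T^m - a = \prod f_i(T)$ together with $\set{(\alpha_i)^m, \alpha_i, \dots}_{K_i/k} = 0$ mod $m$; in the $p$-part one replaces $\sqrt[p]{a}$ by passing through a tower and reduces to the degree-$p$ purely inseparable case as before. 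For surjectivity of $\varphi_k$ one again reduces, via a Bass--Tate-type generation statement (\autoref{prop:BT}) applied after passing to $k^{(l)}$ for each $l \mid m$, to showing every symbol $\set{\alpha_1,\dots,\alpha_n}_{K/k}$ lies in the subgroup generated by symbols defined over $k$; the projection-formula argument at the end of \autoref{thm:modl} goes through verbatim mod $m$. Combined with $\pi_{k,m}^n \circ \varphi_k = \Id$ and surjectivity of $\pi_{k,m}^n$ from \autoref{prop:surjm}, this forces $\pi_{k,m}^n$ to be an isomorphism. Finally, for the Mackey-functor statement: $\pi_{k,m}^n$ is compatible with $\res$ and $\tr$ by construction (the norm on Milnor $K$-theory is part of its Mackey structure, and $\pi$ is built from norms as in \autoref{prop:surjm}), so the pointwise isomorphisms assemble into an isomorphism of Mackey functors $\Gm^{\otimesM n}/m \simeq K_n^M/m$.
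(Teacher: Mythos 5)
Your opening reductions (Chinese remainder to prime powers, adjoining $\zeta_l$, surjectivity from \autoref{prop:surjm}) agree with the paper's, and you correctly identify the pitfall in the naive Bockstein diagram: for $A=\Gm^{\otimesM n}(k)$ the map $A/l\to A/l^s$ need not be injective, so a bare five-lemma chase fails. From there the two arguments genuinely diverge. The paper keeps the diagram-chase strategy and repairs the exactness failure with outside input: for $l\neq p$ it prepends $\set{\zeta_l,-,\ldots,-}\colon \Gm^{\otimesM (n-1)}(k)/l\to \Gm^{\otimesM n}(k)/l^r$ and compares with the Galois-cohomology long exact sequence via the norm residue isomorphism theorem, while for $l=p$ it invokes the Bloch--Gabber--Kato theorem to obtain left-exactness of $0\to K_n^M(k)/p\to K_n^M(k)/p^{r+1}\to K_n^M(k)/p^r\to 0$; the elementary symbol work (\autoref{thm:modl}) is only ever carried out mod $l$. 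You instead rerun the elementary argument with modulus $l^s$ throughout: the Steinberg relation $\set{a,1-a,a_2,\ldots,a_n}_{k/k}\equiv 0$ holds mod $m$ by factoring $T^m-a$ (the resulting symbol is $m$ times something), and the surjectivity of $\varphi_k$ over $k^{(l)}$ via \autoref{prop:BT}, the transfer and \PF is insensitive to the modulus. This is a correct and genuinely different route; what it buys is independence of \autoref{thm:Kahn} from the norm residue isomorphism theorem and from \autoref{thm:BKG}, at the cost of redoing rather than citing the symbol computations for each $l^s$, and it inherits exactly the same delicate points that \autoref{thm:modl} already has (the tower/Bass--Tate step over $k^{(l)}$, in particular for $l\neq p$ over imperfect fields). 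Once the direct argument is adopted, your $k(\zeta_l)$ reduction becomes redundant, since the Sylow reduction to $k^{(l)}$ does all the work. The closing remark on compatibility with $\res$ and $\tr$, giving the isomorphism of Mackey functors, is fine and matches what the paper leaves implicit.
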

\begin{proof}
	Putting $\Gm^{\otimesM0}(k) = \Z$, 
	for the cases $n=0$ and $n=1$, 
	there is nothing to show so that we assume $n\ge 2$. 
	%Let $p\ge 0$ be the characteristic of $k$. 
	Put $m = m'p^r$ with an integer $m'>0$ prime to $p$.
	The Galois symbol map \eqref{eq:gs} and the differential symbol map (\autoref{thm:BKG}) give an isomorphism 
	\[
	s_{k,m}^n: K_n^M(k)/m \xrightarrow{\simeq} H^n(k,\Z/m'(n))\oplus H^n(k,\Z/p^r(n)). 
	\]
	In the following, we often identify $K_n^M(k)/m$ and $H^n(k,\Z/m(n))$ through this isomorphism. 
	By \autoref{prop:surjm}, $\pi_{k,m}^n$ is surjective 
	which is given by $\pi_{k,m}(\set{a_1,\ldots,a_n}_{K/k}) = N_{K/k}(\set{a_1,\ldots,a_n}_{K})$.
	It is enough to show that $\pi_{k,l^r}^n:\Gm^{\otimesM n}(k)/l^r \surj K_n^M(k)/l^r$ 
	is injective for any prime $l$ and $r\ge 1$. 
	Now, we divide the proof into the two cases (a) $l\neq p$, and (b) $l = p>0$. 
	
	In the case (a): $l \neq p$, 
	by the standard arguments, we will reduce to showing 
	the case 
	that $k$ contains a primitive $l$-th root of unity  
	(e.g., \cite[Sect.~1.2]{Kah97}, \cite[Sect.~1]{Sus99}). 
	Take a primitive $l$-th root of unity $\zeta_l \in k^{\sep}$ and put $k' := k(\zeta_l)$ which is a field extension of $k$ of degree prime to $l$. 
	There is a commutative diagram:
	\[
	\xymatrix{
	\Gm^{\otimesM n}(k')/l^r \ar[r]^-{\pi_{k',l^r}^n} & K_n^M(k')/l^r\\
	\Gm^{\otimesM n}(k)/l^r \ar[u]^{\res_{k'/k}}\ar[r]^-{\pi_{k,l^r}^n} & K_n^M(k)/l^r\ar[u]_{\res_{k'/k}}.
	}
	\]
	Since $[k':k]$ is prime to $l$, 
	the restriction maps are injective. 
	If $\pi_{k',l^r}^n$ is injective, so is $\pi_{k,l^r}^n$. 
	Accordingly, we may assume $\zeta_l\in k$. 
	Next, we 
	consider the commutative diagram below: 
	\[
	\xymatrix@C=5mm{
	\Gm^{\otimesM (n-1)}(k)/l \ar[r]^-{\set{\zeta_l,-,\ldots,-}_{k/k}} \ar[d]^{\pi_{k,l}^{n-1}} & \Gm^{\otimesM n}(k)/l^r \ar[r]\ar[d]^{\pi_{k,l^{r}}^n}&  \Gm^{\otimesM n}(k)/l^{r+1} \ar[r]\ar[d]^{\pi_{k,l^{r+1}}^n} & \Gm^{\otimesM n}(k)/l\ar[d]^{\pi_{k,l}^n}\ar[r] & 0\\
	H^{n-1}(k,\Z/l(n-1)) \ar[r]^-{\zeta_l \cup -} & H^n(k,\Z/l^r(n))\ar[r] & H^n(k,\Z/l^{r+1}(n))\ar[r] & H^n(k,\Z/l(n)).
	}
	\]
	In the above diagram, 
	the lower sequence is a part of the long exact sequence 
	induced from the short exact sequence 
	$0 \to \mu_{l^{r}}^{\otimes n} \to \mu_{l^{r+1}}^{\otimes n} \to \mu_l^{\otimes n} \to 0$. 
	The upper one is a complex and exact except the term $\Gm^{\otimesM n}(k)/l^r$. 
	%cf.~Kahn, La conjecture de Milnor Prop.~1.1 
	By the induction on $r$ and \autoref{thm:modl}, 
	$\pi_{k,l^{r+1}}^n$ is bijective. 
	
	Consider the case (b): $l = p>0$. 
	As in the proof of \cite[Cor.~2.8]{BK86}, we have the following commutative diagram 
	with exact rows: 
	\begin{equation}
	\label{diag:BGK}
		\vcenter{
	\xymatrix{
	& \Gm^{\otimesM n}(k)/p \ar[r]\ar@{->>}[d]^{\pi_{k,p}^n} & \Gm^{\otimesM n}(k)/p^{r+1} \ar[r]\ar@{->>}[d]^{\pi_{k,p^{r+1}}^n} & \Gm^{\otimesM n}(k)/p^{r} \ar[r]\ar@{->>}[d]^{\pi_{k,p^{r}}^n}  & 0\\
	0\ar[r] & K_n^M(k)/p \ar[r] & K_n^M(k)/p^{r+1} \ar[r] & K_n^M(k)/p^{r} \ar[r] & 0.
	}}
	\end{equation}
	In fact, the exactness of the lower horizontal sequence follows from 
	\[
	\xymatrix{
	& K_n^M(k)/p \ar[r]\ar[d]^{s_{k,p}^n} & K_n^M(k)/p^{r+1} \ar[r]\ar[d]^{s_{k,p^{r+1}}^n} & K_n^M(k)/p^{r} \ar[r]\ar[d]^{s_{k,p^{r}}^n}  & 0\\
	0\ar[r] & H^n(k,\Z/p(n)) \ar[r] & H^n(k,\Z/p^{r+1}(n)) \ar[r] & H^n(k,\Z/p^r(n)) &
	}
	\]
	and the Bloch-Gabber-Kato theorem (\autoref{thm:BKG}). 
	By the induction on $r$ and the digram \eqref{diag:BGK}, 
	\autoref{thm:Kahn} follows from \autoref{thm:modl}.
\end{proof}

%%%%%%%%%%%%%%%%%%%%%%%%%%%%%%%%%%%%%%%%%%%%
\section{Mackey products and the de Rham-Witt complex}\label{sec:symbol}
Let $k$ be a field of positive characteristic $p>0$ 
and $k^{\sep}$  the separable closure of $k$ in an algebraic closure of $k$. 
For an extension $K/k$, 
we consider the $p$-th power Frobenius $\phi\colon K\to K; a\mapsto a^p$. 
{\blue The induced homomorphism $W_r(\phi) \colon W_{r+1}(k)\to W_r(k)$ 
coincides with $F\colon W_r(k) \to W_r(k)$ (cf.~\eqref{eq:F}).}
% By the exact sequence $0 \to k \xrightarrow{V^r} \to W_{r+1}(K) \xrightarrow{R} W_r(k) \to 0$ 
%and ,for $\ba  = (a_0,a_1,\ldots, a_r) \in W_{r+1}(K)$, 
%we have  $F(\ba) = (f_{p,0}(\ba),f_{p,1}(\ba),\ldots, f_{p,r-1}(\ba))$ 
%with $f_{p,i}(\ba) = a_i^p$ (Hesselholt, pf of Lem. 1.8), 
%so that $F(V^r(k)) = 0$. 
%The Frobenius $F$ induces $W_r(k) \xleftarrow{R} W_{r+1}(k)/V^r(k) \to W_r(k)$. 
For any $r\ge 1$ and any field extension $K/k$, 
%as we used in the Artin-Schreier-Witt theory 
we consider the homomorphism 
\begin{equation}
	\label{eq:wp}
\wp ={\blue F}-\Id: W_r(K) \to W_r(K).
\end{equation}
The map $\wp$ also 
induces the map $\wp\colon W_r\to W_r$ of Mackey functors {\blue (\cite[Prop.~A.9 (v)]{Rue07})}. 
{\blue 
For any Mackey functor $\M$, we define a map 
\begin{equation}\label{eq:wpM}
\wp := \wp\otimesM \Id \colon W_r \otimesM \M\to W_r \otimesM \M.
\end{equation}
For simplicity, we denote also the induced map 
$\wp(k)\colon (W_r\otimesM \M) (k) \to (W_r\otimesM \M)(k)$ by $\wp$. 

\begin{lem}\label{lem:mod}
	For any Mackey functor $\M$ over $k$, 
	We have isomorphisms 
	\[
	(W_r\otimesM \M)/\wp \simeq W_r/\wp \otimesM \M \simeq W_r/\wp \otimesM \M/p^r.
	\]
\end{lem}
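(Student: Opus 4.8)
The plan is to establish the two isomorphisms separately. For the first isomorphism $(W_r\otimesM \M)/\wp \simeq (W_r/\wp) \otimesM \M$, I would argue that the Mackey product is right exact in each variable. Concretely, since the category of Mackey functors is a Grothendieck abelian category and $- \otimesM \M$ commutes with arbitrary direct sums (the defining formula \eqref{eq:otimesM} is a direct sum of tensor products over finite subextensions, and $\otimesZ$ is right exact), one checks that $- \otimesM \M$ is right exact. Applying this to the exact sequence of Mackey functors
\[
W_r \xrightarrow{\wp} W_r \to W_r/\wp \to 0,
\]
and using that $\wp$ on $W_r \otimesM \M$ is by definition $\wp \otimesM \Id$ (see \eqref{eq:wpM}), we get $\Coker(\wp\otimesM\Id) \simeq (W_r/\wp)\otimesM\M$ as Mackey functors; evaluating at $k$ gives the statement. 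The one point requiring care here is that right exactness of $\otimesM$ in each variable should be justified — either by citing the analogous fact for Somekawa/Mackey products (e.g.\ along the lines of \cite[Prop.~3.5.6]{Akh00} and \autoref{rem:tensor}, which already handle direct sums), or by a direct generators-and-relations argument: a presentation of $W_r(K)$ as a cokernel $\bigoplus_\alpha \Z \to \bigoplus_\beta \Z \to W_r(K)\to 0$ tensors (over $\Z$, levelwise) to a presentation of $(W_r\otimesM\M)(k)$, compatibly with the \PF-relations.

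For the second isomorphism $(W_r/\wp)\otimesM\M \simeq (W_r/\wp)\otimesM(\M/p^r)$, the key observation is that $W_r/\wp$ is killed by $p^r$ as a Mackey functor, hence so is the Mackey product $(W_r/\wp)\otimesM\M$ (multiplication by $p^r$ on a symbol $\set{\bar{x},y}_{K/k}$ can be absorbed into the first slot, where it vanishes). That $W_r(K)/\wp(W_r(K))$ is $p^r$-torsion is classical: it is identified with $H^1(K,\Z/p^r\Z)$ by Artin–Schreier–Witt theory \eqref{eq:ASW}, which is visibly annihilated by $p^r$; alternatively one sees it directly from the Witt-vector structure, since $V^r = 0$ on $W_r$ and $p = FV$ imply $p^r$ acts through a nilpotent ideal modulo the image of $\wp$. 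Granting this, multiplication by $p^r$ is zero on both $(W_r/\wp)\otimesM\M$ and $(W_r/\wp)\otimesM(\M/p^r)$, so the natural surjection $\M \surj \M/p^r$ induces a surjection $(W_r/\wp)\otimesM\M \surj (W_r/\wp)\otimesM(\M/p^r)$ whose kernel is generated by symbols $\set{\bar{x}, p^r z}_{K/k} = p^r\set{\bar{x},z}_{K/k} = 0$; hence it is an isomorphism. More structurally: $(W_r/\wp)\otimesM -$ factors through the localization at the Serre subcategory of $p^r$-torsion-free quotients, i.e.\ through $\M \mapsto \M/p^r$, by right exactness applied to $\M \xrightarrow{p^r} \M \to \M/p^r \to 0$ together with the vanishing of $p^r$ on $W_r/\wp$.

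The main obstacle I anticipate is not any single deep fact but rather pinning down right exactness of the Mackey product $-\otimesM\M$ in a clean citable form: the literature (Kahn, Akhtar, Raskind–Spieß, Ivorra–Rülling) develops these products mostly for the purpose of defining Somekawa $K$-groups, and the precise homological statement "$-\otimesM\M$ sends cokernels to cokernels as Mackey functors" may need to be extracted or re-proved from the generators-and-relations description in \autoref{def:otimesM}. Once right exactness is in hand, both isomorphisms are formal. I would therefore organize the write-up as: (1) a short paragraph establishing right exactness of $\otimesM$ in each slot via the direct-sum-of-tensor-products presentation; (2) apply it to $W_r\xrightarrow{\wp}W_r\to W_r/\wp\to 0$ for the first isomorphism; (3) note $p^r\cdot(W_r/\wp) = 0$ and apply right exactness to $\M\xrightarrow{p^r}\M\to\M/p^r\to0$ for the second.
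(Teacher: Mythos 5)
Your proposal is correct and follows essentially the same route as the paper: the first isomorphism via right exactness of $-\otimesM\M$ applied to $W_r\xrightarrow{\wp}W_r\to W_r/\wp\to 0$, and the second via $p^r$-torsion of the first factor (the paper simply notes that $W_r(K)$ itself is annihilated by $p^r$, which is slightly more direct than your Artin--Schreier--Witt detour, but the argument is the same). No substantive difference.
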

\begin{proof}
	Recalling from \autoref{ex:MFexs} (i), 
	$(W_r\otimesM \M)/\wp$ is 
	the cokernel of $\wp\colon W_r\otimesM \M \to W_r\otimesM \M$ defined in \eqref{eq:wpM}. 
	The isomorphism $(W_r\otimesM \M)/\wp \simeq W_r/\wp\otimesM \M$ follows from the right exactness of the product $-\otimesM \M$.
	Since the Witt ring $W_{r}(K)$ is annihilated by $p^r$,  %(Hesselholt08, Pf of Lem 1.12)
	we obtain 
	$W_r/\wp \otimesM \M \simeq W_r/\wp\otimesM\M/p^r$. 
\end{proof}
}

For any $n\ge 1$, the cokernel of the extended $\wp$ 
with respect to the product $\Gm^{\otimesM (n-1)}$ 
(cf.~\eqref{eq:wpM}) is denoted by 
\begin{equation}\label{def:M}
	M_{p^r}^n := (W_r\otimesM \Gm^{\otimesM (n-1)})/\wp.
\end{equation}

\begin{cor}
	If $k$ is perfect and $n\ge 2$, then 
	$M_{p^r}^n(k) = 0$.
\end{cor}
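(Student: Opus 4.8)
The plan is to prove the vanishing directly, by showing that every generator of $M_{p^r}^n(k)$ is zero; the only input beyond the formalism of Mackey products will be Artin--Schreier--Witt theory. First I would use \autoref{lem:mod} to identify
\[
M_{p^r}^n(k) \;=\; (W_r\otimesM\Gm^{\otimesM (n-1)})(k)/\wp \;\simeq\; \bigl(W_r/\wp \otimesM \Gm^{\otimesM(n-1)}\bigr)(k).
\]
By \autoref{def:otimesM} it then suffices to show $\set{\bar\xi, a_2,\ldots,a_n}_{K/k}=0$ for every finite extension $K/k$, every $\bar\xi\in (W_r/\wp)(K)=W_r(K)/\wp(W_r(K))$, and all $a_2,\ldots,a_n\in K^{\times}$. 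Observe that, since $k$ is perfect and $K/k$ is finite, $K$ is perfect as well; in particular $K/k$ is separable and each element of $K^{\times}$ admits a $p^{s}$-th root in $K^{\times}$ for every $s\ge 0$.

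The key reduction step is the following: \emph{if $L/K$ is a finite separable extension with $\res_{L/K}(\bar\xi)=0$ in $W_r(L)/\wp(W_r(L))$, then $\set{[L:K]\,\bar\xi,\,b_2,\ldots,b_n}_{K/k}=0$ for all $b_2,\ldots,b_n\in K^{\times}$.} To see this, put $z:=\set{\bar\xi,b_2,\ldots,b_n}_{K/K}$. Since $K\otimes_KL=L$, the restriction formula of \autoref{def:otimesM} gives $\res_{L/K}(z)=\set{\res_{L/K}\bar\xi,\res_{L/K}b_2,\ldots,\res_{L/K}b_n}_{L/L}$, which vanishes because its first entry does. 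Applying $\tr_{L/K}$ and then \PF\ (to pull the first entry back from $L$ to $K$), and using that $\tr_{L/K}\circ\res_{L/K}$ is multiplication by $[L:K]$ on $W_r/\wp$ — this holds because $W_r$ is a cohomological Mackey functor, i.e.\ $\Tr_{L/K}(1)=[L:K]$, and the property descends to the cokernel $W_r/\wp$ — one obtains
\[
0=\tr_{L/K}\res_{L/K}(z)=\set{[L:K]\,\bar\xi,\,b_2,\ldots,b_n}_{K/K},
\]
and applying $\tr_{K/k}$ finishes the reduction step.

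Now I would fix a generator $\set{\bar\xi,a_2,\ldots,a_n}_{K/k}$ with $n\ge 2$, lift $\bar\xi$ to $\xi\in W_r(K)$, and take $L/K$ to be the Artin--Schreier--Witt extension attached to $\xi$, so that $\res_{L/K}(\bar\xi)=0$ and $[L:K]=p^{s}$ for some $s\ge 0$. Since $K$ is perfect, choose $c_2\in K^{\times}$ with $c_2^{p^{s}}=a_2$. Using $\Z$-bilinearity of the tensor products defining the Mackey product one has $\set{\bar\xi,c_2^{p^{s}},a_3,\ldots,a_n}_{K/k}=\set{p^{s}\bar\xi,c_2,a_3,\ldots,a_n}_{K/k}$, so the reduction step (with $b_2=c_2$) gives
\[
\set{\bar\xi,a_2,a_3,\ldots,a_n}_{K/k}=\set{p^{s}\bar\xi,c_2,a_3,\ldots,a_n}_{K/k}=0.
\]
The hypothesis $n\ge 2$ enters exactly here: one needs a free $\Gm$-slot to absorb the factor $p^{s}=[L:K]$, and indeed the statement is false for $n=1$ (already $M_{p^r}^1(\Fp)=W_r(\Fp)/\wp\cong\Z/p^{r}\Z$). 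Since such symbols generate $M_{p^r}^n(k)$, this proves $M_{p^r}^n(k)=0$.

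The one genuinely arithmetic ingredient is Artin--Schreier--Witt theory, used to trivialize $\bar\xi$ over a finite $p$-extension; everything else is bookkeeping with the Mackey product, so I expect no serious obstacle. (As an alternative, one could instead invoke \autoref{thm:main} to identify $M_{p^r}^n(k)$ with $H^{1}_{\et}(\Spec k,W_r\Omega_{\log}^{n-1})$ and then use that $W_r\Omega^{n-1}_{F}=0$ for any perfect field $F$ — applied to $F=k^{\sep}$, which is perfect — to conclude; but the argument sketched above is self-contained.)
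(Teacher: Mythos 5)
Your proof is correct, but it takes a genuinely different and considerably more roundabout route than the paper. The paper's entire proof is: by \autoref{lem:mod}, $M_{p^r}^n(k)\simeq (W_r/\wp\otimesM\Gm^{\otimesM(n-1)}/p^r)(k)$, and since every finite extension $K/k$ of a perfect field is perfect, $K^{\times}$ is $p^r$-divisible, so $(\Gm/p^r)(K)=0$ for all such $K$ and the whole product vanishes; this is where $n\ge 2$ enters (one needs at least one $\Gm$-factor to reduce modulo $p^r$). You instead trivialize the Witt-vector slot over a finite Artin--Schreier--Witt extension $L/K$ of degree $p^s$ and then absorb $p^s=[L:K]$ into a $\Gm$-slot via a $p^s$-th root, using $\tr_{L/K}\circ\res_{L/K}=[L:K]$ on the Mackey product (a fact the paper does use elsewhere, e.g.\ in the proofs of \autoref{thm:modl} and \autoref{thm:modp}, so it is fair game). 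The steps all check out, but the Artin--Schreier--Witt input is unnecessary: since $p^rW_r(K)=0$, you could have skipped $L$ entirely and written $\set{\bar\xi,a_2,\ldots}_{K/k}=\set{\bar\xi,c_2^{p^r},\ldots}_{K/k}=\set{p^r\bar\xi,c_2,\ldots}_{K/k}=0$ directly, which is exactly the paper's observation $(\Gm/p^r)(k)=0$ unwound on generators. In short: your argument is valid and self-contained, but it spends the one genuinely arithmetic ingredient (ASW theory) on a step that pure $p$-divisibility of $K^{\times}$ together with $p^r$-torsion of $W_r(K)$ already handles.
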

\begin{proof}
This follows from \autoref{lem:mod} and $(\Gm/p^r)(k) = 0$.
\end{proof}

We put 
\begin{equation}\label{def:Hpr}
	H^n_{p^r}(k) := H^n(k,\Z/p^r(n-1)) = H^1(k,W_r\Omega_{\log}^{n-1}).
\end{equation}
{\blue 
%By \cite[Prop.~3.3]{Ill79} (see also \cite[Lem.~2.7]{Shi07}), 
%the map $F\colon W_{r+1}\Omega_k^{n-1}\to W_r\Omega_k^{n-1}$ 
The map $F\colon W_r\Omega_k^{n-1}\to  W_r\Omega_k^{n-1}/\mathrm{d}V^{r-1}\Omega_k^{n-2}$ 
(cf.~\eqref{eq:F})}
%
%By \cite[Prop.~2.8]{Shi07} %\cite[Sect.~1, Lem.~2]{Shi07}}, 
%there is a short exact sequence 
%\[
%0 \to W_r\Omega^{n-1}_{\log} \to W_r\Omega^{n-1}\to W_r\Omega^{n-1}/dV^{r-1}\Omega^{n-2}\to 0
%\]
%of \'etale sheaves on $\Spec(k)$. 
%From the same arguments as in \cite[Sect.~A.1.3]{Izh00},
induces the following 
short exact sequence: 
\begin{equation}\label{eq:strH^n}
	0\to H^{n-1}(k,\Z/p^r(n-1))\to W_r\Omega_k^{n-1}\xrightarrow{\wp} W_r\Omega_k^{n-1}/{\blue \mathrm{d}V^{r-1}\Omega_k^{n-2}}\to H^n_{p^r}(k)\to 0, 
\end{equation}
where $\wp:= F-\Id$
(cf.~\cite[Prop.~2.8]{Shi07}, \cite[Sect.~A.1.3]{Izh00}).
{\blue Since $F$ is a ring homomorphism of $W_r\Omega_k^{\bullet}$ 
and satisfies $F(\mathrm{d}[b]) = [b]^{p-1}\mathrm{d}[b]$,} 
the map $\wp$ is given by $\wp(\mathbf{a}\dlog[b_1]\cdots \dlog [b_{n-1}]) = \wp(\mathbf{a}) \dlog[b_1]\cdots \dlog[b_{n-1}]$. 
%\cite[Sect.~1, Lem.~2]{Shi07}}, 
Moreover, {\blue we have} the following explicit description 
of $H^n_{p^r}(k)$. 

\begin{thm}[{\cite[Sect.~2, Cor.~4 to Prop.~2]{Kat80}, see also \cite[Sect.~A.1.3]{Izh00}}] %\cite[Sect.~3, Proof of Prop.~2]{Kat82c}
\label{thm:Kato}
	The natural homomorphism 
	$\ba\otimes b_1\otimes \cdots \otimes b_{n-1} \mapsto \AS{\ba,b_1,\ldots ,b_{n-1}}_k := \ba \dlog[b_1]\cdots \dlog[b_{n-1}]$ 
	induces an isomorphism 
	\[
	\left(W_r(k)\otimesZ (k^{\times})^{\otimesZ (n-1)}\right)/J \isomto H^{n}_{p^r}(k),
	\]
	where 
	$J$ is the subgroup generated by all elements of the form:
	\begin{enumerate}[label=$(\mathrm{\alph*})$]
		\item $\ba\otimes b_1\otimes \cdots \otimes b_{n-1}$ with $b_i = b_j$ for some $i\neq j$.
		\item $(0,\ldots, 0, a, 0,\ldots ,0)\otimes a\otimes b_2\otimes \cdots \otimes b_{n-1}$ 
		for some $a \in k$.
		\item $\wp(\mathbf{a})\otimes b_1\otimes \cdots  \otimes b_{n-1}$.
	\end{enumerate}
\end{thm}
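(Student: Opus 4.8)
The plan is to read $H^n_{p^r}(k)$ off from the exact sequence \eqref{eq:strH^n} and then match generators with relations. Let $\psi\colon W_r(k)\otimesZ(k^\times)^{\otimesZ(n-1)}\to H^n_{p^r}(k)$ be the map carrying $\ba\otimes b_1\otimes\cdots\otimes b_{n-1}$ to the class of $\ba\dlog[b_1]\cdots\dlog[b_{n-1}]\in W_r\Omega_k^{n-1}$; since $\dlog[bb']=\dlog[b]+\dlog[b']$, this assignment is $\Z$-multilinear, so $\psi$ is a well-defined homomorphism, and by \eqref{eq:strH^n} its target equals $\Coker\big(\wp\colon W_r\Omega_k^{n-1}\to W_r\Omega_k^{n-1}/\mathrm dV^{r-1}\Omega_k^{n-2}\big)$. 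The theorem asserts that $\psi$ is surjective with kernel exactly $J$. I would first record a lemma used throughout: \emph{every exact form $\mathrm d\eta$ with $\eta\in W_r\Omega_k^{n-2}$ dies in $H^n_{p^r}(k)$.} Indeed, unwinding the definition of the induced Frobenius in \eqref{eq:F} together with the relation $F\mathrm dV=\mathrm d$ gives $\mathrm d\eta\equiv\mathrm dV(R\eta)\pmod{\mathrm dV^{r-1}\Omega_k^{n-2}+\Im\wp}$; iterating $r-1$ times (using that $R$ commutes with $V$ and $\mathrm d$) yields $\mathrm d\eta\equiv\mathrm dV^{r-1}(R^{r-1}\eta)$, which lies in $\mathrm dV^{r-1}\Omega_k^{n-2}$ since $R^{r-1}\eta\in W_1\Omega_k^{n-2}=\Omega_k^{n-2}$.

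Next I would verify $\psi(J)=0$. Relation (a) already maps to $0$ in $W_r\Omega_k^{n-1}$, because $\mathrm d[b]\wedge\mathrm d[b]=0$ in the exterior algebra. Relation (c) maps to the class of $\wp(\ba)\dlog[b_1]\cdots\dlog[b_{n-1}]=\wp\big(\ba\dlog[b_1]\cdots\dlog[b_{n-1}]\big)$ (the formula for $\wp$ recalled just above the statement of the theorem), which is $0$ in the cokernel. For relation (b), with $a$ in the $j$-th slot so that the Witt vector is $V^j[a]$, the image is $V^j[a]\cdot\dlog[a]\dlog[b_2]\cdots\dlog[b_{n-1}]$; using $[a]\dlog[a]=\mathrm d[a]$, the closedness of $\dlog$-forms, the projection formula $V^j(x)\cdot\omega=V^j(x\cdot F^j\omega)$, and $F\dlog[b]=\dlog[b]$, this equals $V^j(\mathrm d\nu)$ with $\nu=[a]\dlog[b_2]\cdots\dlog[b_{n-1}]\in W_{r-j}\Omega_k^{n-2}$, and by the de Rham--Witt relation $V\mathrm d=p\,\mathrm dV$ it equals $\mathrm d\big(V^j(p^j\nu)\big)\in\mathrm dW_r\Omega_k^{n-2}$; so it dies by the lemma.

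For surjectivity, the structure theory of the de Rham--Witt complex (\cite{Ill79}) shows that $W_r\Omega_k^{n-1}$ is generated as an abelian group by the $\dlog$-Teichm\"uller forms $V^i\big([x]\dlog[y_1]\cdots\dlog[y_{n-1}]\big)$ (for $0\le i\le r-1$) together with elements of $\mathrm dW_r\Omega_k^{n-2}$. By the projection formula and the $F$-invariance of $\dlog$, $V^i\big([x]\dlog[y_1]\cdots\dlog[y_{n-1}]\big)=V^i([x])\dlog[y_1]\cdots\dlog[y_{n-1}]$ lies in $\Im\psi$, while every element of $\mathrm dW_r\Omega_k^{n-2}$ dies in $H^n_{p^r}(k)$ by the lemma. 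Hence $\psi$ is onto.

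The remaining, and hardest, point is injectivity: producing a left inverse $\bar\theta\colon H^n_{p^r}(k)\to\big(W_r(k)\otimesZ(k^\times)^{\otimesZ(n-1)}\big)/J$ of $\bar\psi$. The natural attempt is to define $\theta$ on $W_r\Omega_k^{n-1}$ by $\ba\dlog[b_1]\cdots\dlog[b_{n-1}]\mapsto[\ba\otimes b_1\otimes\cdots\otimes b_{n-1}]$ and by $0$ on the remaining de Rham--Witt generators, and then to check that $\theta$ respects all the defining relations of the module $W_r\Omega_k^{n-1}$ — this is where one needs the explicit presentation of the de Rham--Witt complex (\cite{Ill79}) — and annihilates $\mathrm dV^{r-1}\Omega_k^{n-2}+\Im\wp$, so that it descends and satisfies $\bar\theta\circ\bar\psi=\Id$. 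The delicate part, which is the genuine content of the theorem, is precisely that the relations built into $W_r\Omega_k^{n-1}$ and into the subgroup $\mathrm dV^{r-1}\Omega_k^{n-2}+\Im\wp$ are all consequences of (a), (b), (c). One can equivalently proceed by d\'evissage on $r$: the exact sequence of \'etale sheaves $0\to\Omega_{\log}^{n-1}\to W_r\Omega_{\log}^{n-1}\xrightarrow{R}W_{r-1}\Omega_{\log}^{n-1}\to0$ (restriction of levels) together with the vanishing $H^{\ge 2}(k,W_s\Omega_{\log}^{n-1})=0$ (immediate from \eqref{eq:strH^n} and additive Hilbert~90) yields an exact sequence $\cdots\to H^n_p(k)\to H^n_{p^r}(k)\to H^n_{p^{r-1}}(k)\to0$; matching it with the analogous sequence for the quotients $\big(W_s(k)\otimesZ(k^\times)^{\otimesZ(n-1)}\big)/J$ reduces to the case $r=1$, where the presentation of $H^n_p(k)=H^1(k,\Omega_{\log}^{n-1})$ follows from the Cartier exact sequence $0\to\Omega_{\log}^{n-1}\to\Omega^{n-1}\xrightarrow{C^{-1}-1}\Omega^{n-1}/\mathrm d\Omega^{n-2}\to0$ and the differential symbol isomorphism (\autoref{thm:BKG}). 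As this is precisely Kato's argument, for the present note one may alternatively just cite \cite{Kat80} and \cite{Izh00}.
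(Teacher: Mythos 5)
The paper does not prove this statement at all: \autoref{thm:Kato} is imported verbatim from Kato \cite{Kat80} (see also Izhboldin \cite{Izh00}) and used as a black box, so the only ``proof'' to compare against is the citation. Your sketch is therefore doing more than the paper does, and the parts you actually carry out are correct: the multilinearity of $\ba\otimes b_1\otimes\cdots\otimes b_{n-1}\mapsto \ba\dlog[b_1]\cdots\dlog[b_{n-1}]$, the lemma that exact forms die in $\Coker(\wp)$ (your computation $\mathrm{d}\eta\equiv \mathrm{d}V(R\eta)\bmod \Im\wp$, iterated into $\mathrm{d}V^{r-1}\Omega_k^{n-2}$, is right and is a nice self-contained observation), and the verification that the relations (a), (b), (c) map to zero, including the reduction of (b) to an exact form via $V^j([a])\dlog[a]=V^j(\mathrm{d}[a])$ and $V\mathrm{d}=p\,\mathrm{d}V$. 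The surjectivity step leans on a generation statement for $W_r\Omega_k^{n-1}$ (Teichm\"uller--$\dlog$ forms plus exact forms) that is true for fields but is itself a filtration argument in \cite{Ill79}, not a one-line quotation; as written it is a citation in disguise.

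The genuine gap is exactly where you say it is: injectivity. Neither of your two strategies is executed. The left-inverse $\theta$ is only described, and checking that it kills all de Rham--Witt relations is the entire content of Kato's theorem, not a routine verification. The d\'evissage on $r$ has the same problem in its base case: the Cartier sequence and \autoref{thm:BKG} identify $H^n_p(k)$ with $\Coker\bigl(\Omega_k^{n-1}\to \Omega_k^{n-1}/\mathrm{d}\Omega_k^{n-2}\bigr)$, but one still has to show that the kernel of $k\otimesZ(k^{\times})^{\otimesZ(n-1)}\to H^n_p(k)$ is generated by (a), (b), (c) --- that is, that these three relations account for all of $\mathrm{d}\Omega_k^{n-2}$ and all the relations defining $\Omega_k^{n-1}$ --- and the inductive step requires exactness of the comparison ladder on the presentation side, which is not checked. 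Since you end by conceding that one should simply cite \cite{Kat80} and \cite{Izh00}, your proposal ultimately lands on the same approach as the paper; the honest summary is that you have verified the easy inclusion $J\subset\ker$ and surjectivity, and deferred the converse inclusion to the same references the paper relies on.
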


From the sequence \eqref{eq:strH^n}, we regard $H^n_{p^r}(k)$ 
as a quotient of $W_r\Omega_k^{n-1}$. 
{\blue For any finite field extension $K/k$, the trace map $\Tr_{K/k}:W_r\Omega_K^{\bullet}\to W_r\Omega_k^{\bullet}$ 
is a map of differential graded modules 
commutes with $V$ and $F$ (\autoref{thm:Tr}). 
The trace map  
$\Tr_{K/k}:W_r\Omega_K^{n-1}\to W_r\Omega_k^{n-1}$ 
induces a map 
$H^n_{p^r}(K)\to H^n_{p^r}(k)$ 
which we also write $\Tr_{K/k}$.
}
%Moreover, this map coincides with 
%the corestriction $\Cor_{K/k}:H^n(K) \to H^n(k)$ 
%defined in \cite[Sect.~1.2, Def.~3]{Kat80}. %see also \cite[Rem.~5.1]{Shi07}
%In fact, when $K/k$ is separable, 
%both of $\Tr_{K/k}$ and $\Cor_{K/k}$ are the classical corestriction *****. 
%In the case where $K/k$ is purely inseparable of degree $p$, 
%the proof of \autoref{lem:NTr} indicates that 
%$\Tr_{K/k}$ is the multiplication by $p$ 
%which is the very definition of $\Cor_{K/k}$.}
%{\blue 
%We recall that, for any finite field extension $K/k$, 
%the corestriction map 
%\[
%\Cor_{K/k}\colon H^{n}_{p^r}(K) \to H_{p^r}^{n}(k)
%\]
%is defined 
%to be $[K:K_0]\circ \Cor_{K_0/k}\circ i^{-1}$, 
%where $K_0$ is the separable closure of $K/k$, 
%$\Cor_{K_0/k}$ is the usual corestriction map for the separable field extension 
%$K_0/k$ and $i$ is the isomorphism 
%$H^{n}_{p^r}(K_0) \isomto H^{n}_{p^r}(K)$  (\cite[Sect.~1.2, Def.~3]{Kat80}). %see also \cite[Rem.~5.1]{Shi07}
%}
%
\begin{prop}\label{prop:surj}
	There is a surjective homomorphism 
	$t_{k,p^r}^n:M_{p^r}^n(k) \to H^n_{p^r}(k)$ 
	which is given by 
	\begin{equation}
	\label{def:tnpr}
		\set{\ba,b_1,\ldots,b_{n-1}}_{K/k} \mapsto \Tr_{K/k}(\AS{\mathbf{a},b_1,\ldots,b_{n-1}}_K).
	\end{equation}
\end{prop}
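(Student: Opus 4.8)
The plan is to produce the map first on the free generators of the Mackey product $(W_r\otimesM\Gm^{\otimesM(n-1)})(k)$, then pass to the quotient by the projection formula \PF\ and subsequently to the quotient by $\wp$, and finally read off surjectivity from \autoref{thm:Kato}. For a finite extension $K/k$, set
\[
\Phi_K\colon W_r(K)\otimesZ (K^{\times})^{\otimesZ(n-1)}\longrightarrow H^n_{p^r}(k),\qquad
\ba\otimes b_1\otimes\cdots\otimes b_{n-1}\longmapsto \Tr_{K/k}\bigl(\AS{\ba,b_1,\ldots,b_{n-1}}_K\bigr),
\]
where $\AS{\ba,b_1,\ldots,b_{n-1}}_K=\ba\,\dlog[b_1]\cdots\dlog[b_{n-1}]\in W_r\Omega_K^{n-1}$ is read in $H^n_{p^r}(K)$ through \eqref{eq:strH^n}, and $\Tr_{K/k}$ is the induced trace $H^n_{p^r}(K)\to H^n_{p^r}(k)$. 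Since $\dlog$ is additive ($\dlog[bb']=\dlog[b]+\dlog[b']$) and both $\Tr_{K/k}$ and the multiplication in $W_r\Omega_K^{\bullet}$ are additive, each $\Phi_K$ is additive in each of its tensor factors, so the $\Phi_K$ assemble into a homomorphism $\Phi\colon\bigoplus_{K/k\;\mathrm{finite}}W_r(K)\otimesZ(K^{\times})^{\otimesZ(n-1)}\to H^n_{p^r}(k)$.

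Next I verify that $\Phi$ kills the relations \PF\ attached to finite extensions $K\subseteq L$ of $k$. The ingredients are: $\res_{L/K}\colon W_r\Omega_K^{\bullet}\to W_r\Omega_L^{\bullet}$ is a homomorphism of differential graded algebras sending $[b]\mapsto[b]$ and $\dlog[b]\mapsto\dlog[b]$; the transitivity $\Tr_{L/k}=\Tr_{K/k}\circ\Tr_{L/K}$ of \autoref{thm:Tr}(c); and the fact that $\Tr_{L/K}$ is a map of graded $W_r\Omega_K^{\bullet}$-modules, that is, $\Tr_{L/K}(\res_{L/K}(\omega)\,\eta)=\omega\,\Tr_{L/K}(\eta)$ (\autoref{thm:Tr}). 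If the transferred entry lies in the Witt slot, the generator is $\Tr_{L/K}(\xi)\otimes b_1\otimes\cdots\otimes b_{n-1}$ over $K$ against $\xi\otimes b_1\otimes\cdots\otimes b_{n-1}$ over $L$ (with $\xi\in W_r(L)$, $b_i\in K^{\times}$); applying $\Phi_L$ to the $L$-term, then the projection formula with $\omega=\dlog[b_1]\cdots\dlog[b_{n-1}]$ and $\eta=\xi$, and transitivity, collapses it to $\Phi_K$ of the $K$-term. If the transferred entry lies in the $j$-th $\Gm$-slot, the generator pits $\ba\otimes b_1\otimes\cdots\otimes N_{L/K}(\beta)\otimes\cdots\otimes b_{n-1}$ over $K$ against $\res_{L/K}(\ba)\otimes b_1\otimes\cdots\otimes\beta\otimes\cdots\otimes b_{n-1}$ over $L$, with $\beta\in L^{\times}$; there one moves $\dlog[\beta]$ to the far end of the degree-$(n-1)$ product by graded-commutativity (sign $(-1)^{n-1-j}$), extracts the restricted factor $\res_{L/K}\bigl(\ba\,\dlog[b_1]\cdots\widehat{\dlog[b_j]}\cdots\dlog[b_{n-1}]\bigr)$ by the projection formula, rewrites $\Tr_{L/K}(\dlog[\beta])$ as $\dlog[N_{L/K}\beta]$ by \autoref{lem:NTr}, and shuttles $\dlog[N_{L/K}\beta]$ back into the $j$-th place (the same sign again, which cancels), arriving at $\Phi_K$ of the $K$-term. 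Hence $\Phi$ descends to $\widetilde\Phi\colon(W_r\otimesM\Gm^{\otimesM(n-1)})(k)\to H^n_{p^r}(k)$, $\{\ba,b_1,\ldots,b_{n-1}\}_{K/k}\mapsto\Tr_{K/k}(\AS{\ba,b_1,\ldots,b_{n-1}}_K)$.

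It remains to descend through $\wp$ and to check surjectivity. By functoriality of $-\otimesM-$, the map $\wp=\wp\otimesM\Id$ of \eqref{eq:wpM} sends $\{\ba,b_1,\ldots,b_{n-1}\}_{K/k}$ to $\{\wp(\ba),b_1,\ldots,b_{n-1}\}_{K/k}$, so $\widetilde\Phi(\wp(-))=\Tr_{K/k}(\AS{\wp(\ba),b_1,\ldots,b_{n-1}}_K)$. Because $F$ fixes each $\dlog[b_i]$, we have $\AS{\wp(\ba),b_1,\ldots,b_{n-1}}_K=\wp\bigl(\ba\,\dlog[b_1]\cdots\dlog[b_{n-1}]\bigr)$ with $\wp=F-\Id$ acting on $W_r\Omega_K^{n-1}$; by the exactness of \eqref{eq:strH^n} this class already vanishes in $H^n_{p^r}(K)$, so $\widetilde\Phi(\wp(-))=0$. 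Thus $\widetilde\Phi$ factors through $M_{p^r}^n(k)=(W_r\otimesM\Gm^{\otimesM(n-1)})(k)/\wp$ and yields $t_{k,p^r}^n$ with the formula \eqref{def:tnpr}. For surjectivity, take $K=k$: then $\Tr_{k/k}=\Id$, so $t_{k,p^r}^n(\{\ba,b_1,\ldots,b_{n-1}\}_{k/k})=\AS{\ba,b_1,\ldots,b_{n-1}}_k$; since by \autoref{thm:Kato} these symbols generate $H^n_{p^r}(k)$, the map $t_{k,p^r}^n$ is onto.

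The step I expect to be the real obstacle is the $\Gm$-slot case of the \PF-verification: it is the one place where \autoref{lem:NTr} and the $W_r\Omega_K^{\bullet}$-module structure of the trace must be combined, and where one has to keep careful track of the signs coming from graded-commutativity when moving $\dlog[\beta]$ and $\dlog[N_{L/K}\beta]$ past the remaining $\dlog$-factors (the two sign contributions cancel, and are in any case trivial when $p=2$).
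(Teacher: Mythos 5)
Your proposal is correct and follows essentially the same route as the paper: define the map on generators via the trace, verify the two \PF\ cases using the $W_r\Omega_K^{\bullet}$-module property of $\Tr_{L/K}$, its transitivity, and \autoref{lem:NTr}, observe that the image of $\wp$ dies in $H^n_{p^r}$ by \eqref{eq:strH^n}, and read surjectivity off \autoref{thm:Kato}. Your explicit sign bookkeeping in the $\Gm$-slot case is a detail the paper leaves implicit, but it is the same argument.
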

\begin{proof}
	We define 
	$t_{k,p^r}^n\colon(W_r \otimesM \Gm^{\otimesM (n-1)})(k)\to H^n_{p^r}(k)$  
	by the corresponding given in \eqref{def:tnpr}. 
	To show that $t_{k, p^r}^n$ is well-defined,
	take finite field extensions $k\subset K\subset L$. 

	\smallskip
	\noindent 
	Case (a):
	For $\beta_{i_0} \in \Gm(L)$,  $b_i\in \Gm(K)$ with $i\neq i_0$, and $\ba\in W_r(K)$,  
	the relation \PF implies the equality:
	\[
	\set{\ba,b_1,\ldots,N_{L/K}(\beta_{i_0}),\ldots,b_{n-1}}_{K/k} = \set{\ba, b_1,\ldots, \beta_{i_0},\ldots, b_{n-1}}_{L/k}.
	\]
	Here, we omit the restriction maps $\res_{L/K}$ in the right. % symbol.
	By the transitivity of the trace map $\Tr_{L/k} = \Tr_{K/k} \circ \Tr_{L/K}$ (\autoref{thm:Tr} (c)), 
	it is sufficient to show the equality
	\begin{equation}
		\label{eq:PFTr1}		
	\AS{\ba,b_1,\ldots,N_{L/K}(\beta_{i_0}),\ldots,b_{n-1}}_K = \Tr_{L/K}(\AS{\ba,b_1,\ldots,\beta_{i_0},\ldots,b_{n-1}}_L).
	\end{equation}
	The $W_r\Omega_K^{n-1}$-linearity of the trace map $\Tr_{L/K}$, 
	and \autoref{lem:NTr} imply the equality \eqref{eq:PFTr1}.
	
\smallskip
\noindent 
Case (b): 	
	For $\balpha\in W_r(L)$ and $b_{i} \in \Gm(K)$ for $1\le i\le n-1$, 
	the relation \PF implies the equality:
	\[
	\set{\Tr_{L/K}(\balpha),b_1,\ldots,b_{n-1}}_{K/k} = \set{\balpha, b_1,\ldots , b_{n-1}}_{L/k}.
	\]
{\blue 
	By the $W_r\Omega_K^{n-1}$-linearity of the trace map $\Tr_{L/K}$, we have}
	\begin{equation}
		\label{eq:PFTr2}		
	\AS{\Tr_{L/K}(\balpha),b_1,\ldots,b_{n-1}}_K = \Tr_{L/K}(\AS{\balpha,b_1,\ldots,b_{n-1}}_L).
	\end{equation}
	Hence, $t^n_{k,p^r}$ is well-defined.
	
%	If the extension $L/K$ is separable, 
%	\autoref{thm:Tr} (a) 
%	gives the equality \eqref{eq:PFTr2}.
%	As in the case (a), it is left to show the 
%	equality \eqref{eq:PFTr2} when $L/K$ is purely inseparable of degree $p$. 
%	For the purely inseparable extension $L/K$ of degree $p$, 
%	the Witt vector $\balpha \in W_r(L)$ can be written 
%	$\balpha = \sum_{i=0}^{r-1} V^i([\alpha_i]_{r-i})$, 
%	where $V^i:W_{r-i}(L) \to W_r(L)$ is the Verschiebung 
%	and $[-]_{r-i}\colon L \to W_{r-i}(L)$ is the Teichm\"uller map. 
%	Since the each term satisfies 
%	\begin{align*}
%		\Tr_{L/K}(V^i([\alpha_i]_{r-i})) &= V^{i+1}([\alpha_i^p]_{r-i+1}) \\
%		&= \Tr_{L/K}(V^i([\alpha_i]_{r-i})\\
%		&=pV^i([\alpha_i]_{r-i})
%	\end{align*} 
%	by the very construction of the trace map (\cite[Ex.~A.10]{Rue07}). 
%	We have $\Tr_{L/K}(\balpha) = p\balpha$. 
%	By \autoref{thm:Tr} (b) (as in the case (a)), 
%	for $\omega := \balpha\dlog[b_1]\cdots \dlog[b_{n-1}]$, 
%	put 
%	\begin{align*}
%		\wt{\omega} &:= \wt{\balpha}\dlog[b_1]\cdots \dlog[b_{n-1}] \in W_{r+1}\Omega_L^{n-1},\ \mbox{and}\\
%		\eta &:= \Tr_{L/K}(\wt{\balpha}) \dlog[b_1]\cdots \dlog[b_{n-1}] 
%		\in W_{r+1}\Omega_{K}^{n-1}. 
%	\end{align*}
%	We have 
%	\begin{align*}
%		\underline{p}(\omega) &= p\wt{\balpha}\dlog[b_1]\cdots \dlog[b_{n-1}]\\
%		&= \Tr_{L/K}(\wt{\balpha})\dlog[b_1]\cdots  \dlog[b_{n-1}]\\
%		&= \res_{L/K}(\eta) \quad \mbox{in $W_{r+1}\Omega_L^{n-1}$}.
%	\end{align*}
%	Thus, we obtain 
%	\[
%	\Tr_{L/K}(\omega) = R_r(\eta) = \Tr_{L/K}(\balpha)\dlog[b_1]\cdots \dlog[b_{n-1}].
%	\]
%	The equality \eqref{eq:PFTr2} follows from this. 
	
	By \autoref{thm:Kato}, the map $t_{k,p^r}^n$ is surjective 
	and factors through 
	the quotient $M_{p^r}^n(k)$. 
	The induced homomorphism $M_{p^r}^n(k)\to H^n_{p^r}(k)$ 
	is the required one. 
\end{proof}

%
%\begin{proof}
%{\blue
%	By the projection formula in $H^{n}_{p^r}(k)$ (\cite[Sect.~3.2, Lem.~1]{Kat80}), 
%	 the corresponding in \eqref{def:tnpr} gives $t_{k,p^r}^n$. 
%	In fact, by \cite[Ex.~A.10]{Rue07}, the diagram
%	\[
%	\xymatrix{
%	W_r(L)\ar[d]_{\Tr_{L/K}}\ar[r] & H^1_{p^r}(L)\ar[d]^{\Cor_{L/K}} \\
%	W_r(K)\ar[r] & H^1_{p^r}(K) 
%	}
%	\]
%	is commutative for any finite extension $L/K$ over $k$. 
%	$H^n_{p^r}(k) \hookrightarrow \ilim_{r\ge 1} H^n_{p^r}(k) =: H^n(k)$. 
%	By \cite[Sect.~3.2, Lem.~1]{Kat80}, we have 
%	\[
%	\Cor_{L/k}(\AS{\balpha,b_1,\ldots,b_{n-1}}_{L}) = \Cor_{K/k}(\AS{\Tr_{L/K}(\balpha),b_1,\ldots,b_{n-1}}_{K})
%	\]
%	in $H^n(k)$ 
%	for $\alpha \in W_r(L)$ and $b_i\in K^{\times}$. 
%	On the other hand, 
%	we have 
%	\begin{align*}
%	&\Cor_{L/k}(\AS{\ba,b_1,\ldots, \beta_{i_0},\ldots,b_{n-1}}_{L}) \\
%	&=\Cor_{K/k}\circ \Cor_{L/K}(\AS{\ba,b_1,\ldots, \beta_{i_0},\ldots,b_{n-1}}_{L})\\
%	&= \Cor_{K/k} (\AS{\ba, N_{K/k}(\set{b_1, \ldots, \beta_{i_0},\ldots,b_{n-1}}_K)}_{K}),
%	\end{align*}
%	where the norm in the last term is the norm map on the Milnor $K$-group. 
%	The projection formula of the Milnor $K$-groups gives the assertion. 
%}	 
%\end{proof}
%
%
Let $\Symb_p(k)$ be the subgroup of $M_{p}^n(k) = (\Ga \otimesM\Gm^{\otimesM(n-1)})(k)/\wp$ 
	generated by the symbols of the form $\set{a,b_1,\ldots ,b_{n-1}}_{k/k}$. 
First we show that the map $t_{k,p}^n$ is bijective 
on the subgroup $\Symb_p(k)$.

\begin{lem}\label{lem:Symb}
	The map $t_{k,p}^n\colon M_p^n(k)\to H_p^n(k)$ induces an isomorphism 
	$\Symb_p(k) \isomto H^n_{p}(k)$.  
\end{lem}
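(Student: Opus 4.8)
The plan is to exhibit an explicit inverse to the restriction of $t_{k,p}^n$ to $\Symb_p(k)$, mimicking the strategy already used for $\Gm^{\otimesM n}$ in \autoref{lem:SymbK}. By \autoref{thm:Kato} (with $r=1$), the group $H^n_p(k)$ is the quotient of $k \otimesZ (k^\times)^{\otimesZ(n-1)}$ by the subgroup $J$ generated by relations (a), (b), (c); and by definition $\Symb_p(k)$ is the image of the map $k \otimesZ (k^\times)^{\otimesZ(n-1)} \to M_p^n(k)$ sending $a \otimes b_1 \otimes \cdots \otimes b_{n-1}$ to $\set{a,b_1,\ldots,b_{n-1}}_{k/k}$. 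So it suffices to check that this latter map kills each of the generators of $J$, for then it descends to a homomorphism $\psi_k \colon H^n_p(k) \to \Symb_p(k)$, and the composites $t_{k,p}^n \circ \psi_k$ and $\psi_k \circ t_{k,p}^n$ (on $\Symb_p(k)$) are both visibly the identity on generators, since $t_{k,p}^n(\set{a,b_1,\ldots,b_{n-1}}_{k/k}) = \Tr_{k/k}(\AS{a,b_1,\ldots,b_{n-1}}_k) = \AS{a,b_1,\ldots,b_{n-1}}_k$.

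So the work is to verify $\set{a,b_1,\ldots,b_{n-1}}_{k/k} = 0$ in $M_p^n(k)$ for each type of relation. Relation (a), where $b_i = b_j$ for some $i \neq j$: this vanishes already in $\Gm^{\otimesM(n-1)}(k)$ (hence in the Mackey product $(\Ga \otimesM \Gm^{\otimesM(n-1)})(k)$ and its quotient $M_p^n(k)$) by the standard argument that $\set{\ldots,b,\ldots,b,\ldots}$ is $2$-torsion and equals $\set{\ldots,b,\ldots,-b,\ldots} = \set{\ldots,b,\ldots,-1,\ldots}$-type manipulations — more directly, one can reduce to the two-slot case and use $\set{b,b} = \set{b,-1}$ together with $\set{b,-1} = \set{-1,-1}$-style identities, but it is cleanest to note that these are exactly the defining Steinberg-type relations that hold in $K_{n-1}^M$ and to transport them via the projection formula as in \autoref{lem:SymbK}. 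Relation (c), $\wp(\ba) \otimes b_1 \otimes \cdots \otimes b_{n-1}$: this is immediate, since $\set{\wp(a),b_1,\ldots,b_{n-1}}_{k/k}$ is by definition in the image of the map $\wp = \wp \otimesM \Id$ on $(\Ga \otimesM \Gm^{\otimesM(n-1)})(k)$, hence zero in the cokernel $M_p^n(k)$.

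The main obstacle is relation (b): showing $\set{a,a,b_2,\ldots,b_{n-1}}_{k/k} = 0$ in $M_p^n(k)$ for every $a \in k$ (here the first slot is the Witt-vector $a \in \Ga(k) = W_1(k)$ and the second is $a \in \Gm(k)$ when $a \neq 0$; the case $a = 0$ or $a = 1$ being trivial). This is the genuine Artin–Schreier input and has no analogue in the purely multiplicative setting. The plan is to imitate the purely inseparable trick from \autoref{lem:SymbK}: if $a \notin \wp(k)$, consider the Artin–Schreier extension $K := k(\theta)/k$ with $\wp(\theta) = \theta^p - \theta = a$, which has degree $p$. Over $K$ one has $a = \wp(\theta)$, so $\set{a,a,b_2,\ldots,b_{n-1}}_{k/k}$ should be pushed up to $K$ and rewritten using $a = \theta^p - \theta$ on the multiplicative slot together with $a = \wp(\theta)$ — but more to the point, the element $\AS{a, a, b_2,\ldots,b_{n-1}}_k \in H^n_p(k)$ is precisely the class whose vanishing is Kato's relation, i.e. it is the symbol attached to the cyclic algebra that splits over $K$, and the identity we need is the reciprocity $\set{\wp(\theta), N_{K/k}(u), \ldots} = \set{\wp(\theta)|_K, u, \ldots}_{K/k}$ combined with the fact that over $K$ the first two slots become $\set{a|_K, a|_K,\ldots}$ where now $a|_K \in \wp(W_1(K))$, making the symbol lie in the image of $\wp$. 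Concretely: by \PF, $\set{a, N_{K/k}(c), b_2,\ldots}_{k/k} = \set{a, c, b_2,\ldots}_{K/k}$ for suitable $c \in K^\times$; one arranges that the relevant multiplicative argument over $k$ is a norm from $K$ (this is where one uses that $a$, viewed additively, is $\wp(\theta)$, and exploits a multiplicative identity expressing the second-slot $a$ in terms of data over $K$), after which the symbol over $K$ has its Witt-slot equal to $\res_{K/k}(a) = \wp(\theta)$, hence is killed by $\wp \otimesM \Id$ over $K$ and its transfer down is $0$. If $a \in \wp(k)$ the symbol is already in the image of $\wp$ and vanishes directly. Assembling these three verifications gives the well-defined $\psi_k$, and the lemma follows.
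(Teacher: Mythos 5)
Your overall strategy --- inverting $t_{k,p}^n$ on $\Symb_p(k)$ by checking that $a\otimes b_1\otimes\cdots\otimes b_{n-1}\mapsto \set{a,b_1,\ldots,b_{n-1}}_{k/k}$ kills the three families of generators of $J$ from \autoref{thm:Kato} --- is exactly the paper's, and your handling of relation (c) and of the two composites being the identity on generators is fine. The problem is that the one verification you yourself single out as ``the main obstacle,'' relation (b), is not actually carried out: you stop at ``one arranges that the relevant multiplicative argument over $k$ is a norm from $K$.'' The entire content of the step is the explicit identity that does this arranging. If $\theta$ is a root of $f(T)=T^p-T-a$ and $a\notin\wp(k)$, one must first check that $f$ is irreducible (so that $K=k(\theta)$ really has degree $p$) and then compute $-a=f(0)=(-1)^pN_{K/k}(\theta)$, whence $a=N_{K/k}(\theta)$ in every characteristic. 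Only with this in hand can one write
\[
\set{a,a,b_2,\ldots,b_{n-1}}_{k/k}=\set{a,N_{K/k}(\theta),b_2,\ldots,b_{n-1}}_{k/k}
=\set{\wp(\theta),\theta,b_2,\ldots,b_{n-1}}_{K/k}=0\quad\text{in }M_p^n(k),
\]
using \PF and $\res_{K/k}(a)=\wp(\theta)$. Asserting that a suitable $c\in K^\times$ exists without producing $c=\theta$ leaves the lemma unproved at its crux.

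For relation (a), your opening claim that the symbol with $b_i=b_j$ ``vanishes already in $\Gm^{\otimesM(n-1)}(k)$'' is false: the bare Mackey product carries only the projection formula and no Steinberg-type relation, so $\set{b,b}$ is neither zero nor $2$-torsion there. The vanishing is only available after reducing mod $p$ (via \autoref{lem:SymbK} and the identities $\set{b,b}=\set{-1,b}$, $2\set{-1,b}=0$ in $K^M_{n-1}/p$ in characteristic $p$, transported into $M_p^n(k)$ by \autoref{lem:mod}); that route can be completed but you do not complete it. The paper instead uses a self-contained trick worth adopting: pass to $K=k(\sqrt[p]{b})$, apply \PF to replace one slot $b=N_{K/k}(\sqrt[p]{b})$ by $\sqrt[p]{b}$ over $K$, and observe that the remaining slot $b=(\sqrt[p]{b})^p$ makes the symbol divisible by $p$, hence zero in the $p$-torsion group $M_p^n(k)$.
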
	
\begin{proof}
	By using \autoref{thm:Kato}, 
	we define a map $\varphi_k:H^n_p(k) \to \Symb_p(k)$ by 
	$\varphi_k(\AS{a,b_1,\ldots, b_{n-1}}_k) := \set{a,b_1,\ldots, b_{n-1}}_{k/k}$. 
	To show that $\varphi$ is well-defined, 
	it is left to prove 
	$\set{a,b_1,\ldots,b_{n-1}}_{k/k} = 0$ if $b_i = b_j$ for some $i\neq j$, 
	and  
	$\set{a,a,b_2,\ldots, b_{n-1}}_{k/k} = 0$ in $M_p^n(k)$ for some $a\neq 0$. 
	For the first equality, put $b := b_i = b_j$. We may assume $b\not \in (k^{\times})^p$. 
	Consider the purely inseparable extension $K := k(\sqrt[p]{b})$ 
	of $k$. 
	Since we have $N_{K/k}(\sqrt[p]{b}) = (\sqrt[p]{b})^p = b_i$, 
	the projection formula \PF gives 
	\begin{align*}
		\set{a,b_1,\ldots, b_i,\ldots,b_j,\ldots, b_{n-1}}_{k/k}  
		&= \set{a,b_1,\ldots, N_{K/k}(\sqrt[p]{b}),\ldots, b_j,\ldots, b_{n-1}}_{k/k}  \\
		&= \set{a,b_1,\ldots, \sqrt[p]{b},\ldots, b_j,\ldots, b_{n-1}}_{K/k}.
	\end{align*}
	By $b_j = (\sqrt[p]{b})^p$ in $K$, we obtain $\set{a,b_1,\ldots,b_{n-1}}_{k/k} = 0$ 
	in $M_p^n(k)$.

%	Let $T^p - T - a = \prod_{i}f_i(T)$ 
%	be the decomposition with monic and irreducible polynomials $f_i(T)$ in $k[T]$. 
%	(In fact, $f(T) = T^p- T- a$ is irredcuble or decomposed into degree 1 polynomials.)
%	For each $i$, let $\alpha_i \in k^{\sep}$ be a root of $f_i(T)$ 
%	and put $K_i = k(\alpha_i)$. 
%	Then, 
%	\[
%	-a = \prod_{i}f_i(0) %= \prod_{i}(-1)^{\deg(f_i)}N_{K_i/k}(\alpha_i) 
%	= (-1)^{p}\prod_i N_{K_i/k}(\alpha_i).
%	\]
%	This implies $a = \prod_i N_{K_i/k}(\alpha_i)$. 
%	We have
%	\begin{align*}
%	&\set{a,a,b_2,\ldots, b_{n-1}}_{k/k}\\
%	 &= \sum_i \set{a,N_{K_i/k}(\alpha_i),b_2,\ldots, b_{n-1}}_{k/k} \\
%	&= 	\sum_i \set{a,\alpha_i,b_2,\ldots, b_{n-1}}_{K_i/k}\quad (\mbox{by \PF})\\
%	&= \sum_i \set{\wp(\alpha_i),\alpha_i,b_2,\ldots, b_{n-1} }_{K_i/k}\quad (\mbox{by $\alpha_i^p - \alpha_i -a = \prod_j f_j(\alpha_i) = 0$})\\
%	&= 0\quad \mbox{in $M_p^n(k)$}.
%	\end{align*} 
%	Clearly, we have $\varphi_k\circ t_{k,p}^n = \Id$ on $\Symb_p(k)$. 
%	The map $t_{k,p}^n$ induces the bijection $\Symb_p(k)\isomto H^n_p(k)$. 

	{\blue 	To show the equality $\set{a,a,b_2,\ldots,b_{n-1}}_{k/k}= 0$. 
	Consider the polynomial $f(T) = T^p - T - a$ in $k[T]$ 
	and take a root $\alpha\in k^{\sep}$ of $f(T)$.  
	In the case $\alpha\in k$, 
	we derive $\wp(\alpha) = \alpha^p - \alpha = a$, and hence 
	$\set{a,a,b_2,\ldots,b_{n-1}}_{k/k} = 0$ in $M_{p}^n(k)$. 
	When $\alpha \not\in k$, 
	the polynomial $f(T)$ is irreducible. 
	In fact,  
	the remaining roots of $f(T)$ can be expressed as $\alpha + i$ for $i\in \Fp^{\times}$.
	If $g(T)$ is an irreducible factor of degree $r\le p$, 
	then the coefficient of $T^{r-1}$ in $g(T)$ is 
	$-\sum_{j=1}^r(\alpha + i_j)$ for some $i_1,\ldots,i_r\in \Fp$. 
	Since this coefficient belongs to $k$, we conclude $r = p$ implying $f(T) = g(T)$.}
	Now, let us put $K = k(\alpha)$. 
	Then, 
	\[
	-a = f(0) = (-1)^{p}N_{K/k}(\alpha).
	\]
	This yields $a = N_{K/k}(\alpha)$. 
	Thus, we have
	\begin{align*}
	\set{a,a,b_2,\ldots, b_{n-1}}_{k/k}
	 &= \set{a,N_{K/k}(\alpha),b_2,\ldots, b_{n-1}}_{k/k} \\
	&= 	\set{a,\alpha,b_2,\ldots, b_{n-1}}_{K/k}\quad (\mbox{by \PF})\\
	&= \set{\wp(\alpha),\alpha,b_2,\ldots, b_{n-1} }_{K/k}\quad (\mbox{by $\alpha^p - \alpha -a = f(\alpha) = 0$})\\
	&= 0\quad \mbox{in $M_p^n(k)$}.
	\end{align*} 
	Clearly, we have $\varphi_k\circ t_{k,p}^n = \Id$ on $\Symb_p(k)$. 
	The map $t_{k,p}^n$ induces a bijection $\Symb_p(k)\isomto H^n_p(k)$. 
\end{proof}

\begin{prop}\label{prop:BTa}
	Let $k$ be a field of characteristic $p>0$. 
	Suppose that every finite extension of $k$ is of degree $p^r$ for some $r\ge 0$. 
	Then, for $n\ge 2$ and for an extension $K/k$ of degree $p$, 
	{\blue the quotient group $\Omega_K^{n-1}/\mathrm{d}\Omega_K^{n-2}$} is generated by differentials of the forms 
	\begin{align*}
	\xi\dlog y_1\cdots\dlog y_{n-1},\ \mbox{where 
	$\xi \in K$ and $y_1,\ldots, y_{n-1}\in k^{\times}$, and}\\
	x\dlog  \eta \dlog y_2 \cdots  \dlog y_{n-1},\ 
	\mbox{where $x \in k, \eta \in K^{\times}$ and $y_2,\ldots, y_n \in k^{\times}$}. 
	\end{align*}
\end{prop}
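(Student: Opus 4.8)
The plan is to establish this additive analogue of \autoref{prop:BT} by analyzing the $K$-module structure of $\Omega_K^1 = \Omega_{K/\Fp}^1$ directly, then passing to $(n-1)$st exterior powers and reducing coefficients by means of two elementary facts: $\Omega_k^{n-1}$ is generated as a $k$-module by the forms $\dlog y_1\cdots\dlog y_{n-1}$ with $y_i\in k^{\times}$ (because $\mathrm{d}y = y\,\dlog y$ for $y\in k^{\times}$), and $\mathrm{d}(\dlog y)=0$ for all $y\in k^{\times}$. Since $[K:k]=p$ is prime, $K/k$ is either separable or purely inseparable; in the latter case $K=k(\theta)$ with $\theta^p=b$ for some $b\in k\ssm k^p$. (The hypothesis that every finite extension of $k$ has $p$-power degree only excludes intermediate possibilities and is in fact automatic here because $p$ is prime; I keep it to parallel \autoref{prop:BT}.)

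In the separable case $K/k$ is \'etale, so $\Omega_{K/k}^1 = 0$ and the base-change map $K\otimes_k\Omega_k^1\to\Omega_K^1$ is an isomorphism; taking exterior powers gives $\Omega_K^{n-1}\simeq K\otimes_k\Omega_k^{n-1}$, which by the first fact is generated by elements $\xi\,\dlog y_1\cdots\dlog y_{n-1}$ with $\xi\in K$, $y_i\in k^{\times}$ --- already of the first listed shape, so one does not even need to pass to $\Omega_K^{n-1}/\mathrm{d}\Omega_K^{n-2}$ here.

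For the purely inseparable case I would feed $f=\theta^p-b$ into the conormal exact sequence for $K=k[\theta]/(f)$; since $\mathrm{d}f=-\,\mathrm{d}b$, this yields a $K$-module decomposition $\Omega_K^1\simeq\bigl((K\otimes_k\Omega_k^1)/K\,\mathrm{d}b\bigr)\oplus K\,\mathrm{d}\theta$. Writing $N := (K\otimes_k\Omega_k^1)/K\,\mathrm{d}b$, the $(n-1)$st exterior power splits as $\bigwedge^{n-1}_K N \oplus (\bigwedge^{n-2}_K N)\wedge\mathrm{d}\theta$; the first summand, being a quotient of $K\otimes_k\Omega_k^{n-1}$, is generated by first-type elements, and the second is generated by elements $\xi\,\mathrm{d}\theta\,\dlog y_2\cdots\dlog y_{n-1}$ with $\xi\in K$, $y_i\in k^{\times}$. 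For such an element write $\xi = \sum_{i=0}^{p-1}x_i\theta^i$ with $x_i\in k$: for $0\le i\le p-2$ we have $\theta^i\,\mathrm{d}\theta = \frac{1}{i+1}\,\mathrm{d}(\theta^{i+1})$, so using $\mathrm{d}(\dlog y_j)=0$ the term $x_i\theta^i\,\mathrm{d}\theta\,\dlog y_2\cdots\dlog y_{n-1}$ equals $\frac{x_i}{i+1}\,\mathrm{d}\bigl(\theta^{i+1}\dlog y_2\cdots\dlog y_{n-1}\bigr)\in\mathrm{d}\Omega_K^{n-2}$; and for $i=p-1$ we have $\theta^{p-1}\,\mathrm{d}\theta = \theta^p\,\dlog\theta = b\,\dlog\theta$, so the remaining term is $(x_{p-1}b)\,\dlog\theta\,\dlog y_2\cdots\dlog y_{n-1}$, of the second listed shape with $x = x_{p-1}b\in k$ and $\eta = \theta\in K^{\times}$. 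Hence modulo $\mathrm{d}\Omega_K^{n-2}$ every generator has one of the two required forms.

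The only step needing genuine care is the $K$-module computation of $\Omega_K^1$ in the purely inseparable case --- pinning down the image of the conormal map precisely and matching the two summands with the two declared families of generators. Everything after the decomposition is just the Leibniz rule and $\mathrm{d}\,\dlog=0$, so I expect the remainder to be routine and short.
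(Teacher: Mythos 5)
Your proposal is correct in substance but takes a genuinely different route from the paper's. The paper first reduces to $n=2$ by pushing the problem through the differential symbol into Milnor $K$-theory and invoking the Bass--Tate generation result (\autoref{prop:BT}) --- which is where the hypothesis on $k$ enters --- and then, for $n=2$, factors the polynomial representing $\beta\in K^{\times}$ into linear factors over $k$ (using the hypothesis a second time: every irreducible polynomial of degree $<p$ over $k$ is linear) to decompose $\alpha\dlog\beta$ explicitly. You instead compute the $K$-module $\Omega_K^1$ directly from the cotangent and conormal sequences, split along the separable/purely inseparable dichotomy, and pass to exterior powers. This is more elementary (no Milnor $K$-theory, no Bass--Tate) and in fact proves the statement without the hypothesis that every finite extension of $k$ has $p$-power degree --- that hypothesis is genuinely load-bearing in the paper's argument but not in yours. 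What the paper's route buys is a uniform treatment (no case split) and a deliberate parallel with the multiplicative argument of \autoref{thm:modl}; what yours buys is a self-contained, slightly stronger result.

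One small slip to repair: you assert that $\frac{x_i}{i+1}\,\mathrm{d}\bigl(\theta^{i+1}\dlog y_2\cdots\dlog y_{n-1}\bigr)$ lies in $\mathrm{d}\Omega_K^{n-2}$, but $\mathrm{d}\Omega_K^{n-2}$ is only an additive subgroup, not a $K$-submodule, and $x_i\in k$ need not be a constant. The correct bookkeeping is
\[
\mathrm{d}\bigl(x_i\theta^{i+1}\dlog y_2\cdots\dlog y_{n-1}\bigr) = (i+1)\,x_i\theta^{i}\,\mathrm{d}\theta\,\dlog y_2\cdots\dlog y_{n-1} + \theta^{i+1}x_i\dlog x_i\,\dlog y_2\cdots\dlog y_{n-1},
\]
so for $0\le i\le p-2$ (and $x_i\neq 0$) the term $x_i\theta^{i}\,\mathrm{d}\theta\,\dlog y_2\cdots\dlog y_{n-1}$ is congruent modulo $\mathrm{d}\Omega_K^{n-2}$ to $-\tfrac{1}{i+1}\theta^{i+1}x_i\dlog x_i\,\dlog y_2\cdots\dlog y_{n-1}$, which is a generator of the first listed type rather than zero. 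The conclusion of your argument is unaffected.
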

\begin{proof}
	Take any 
	$\omega = \alpha\dlog \beta_1 \cdots \dlog \beta_{n-1}$ in $\Omega_K^{n-1}$ 
	for some $\alpha \in K, \beta_1,\ldots, \beta_{n-1}\in K^{\times}$ 
	{\blue with $\alpha\neq 0$.}
	The differential form $\alpha^{-1}\omega = \dlog \beta_1 \cdots \dlog \beta_{n-1}$ 
	is  in the image of the differential symbol map \eqref{eq:ds}
	\[
	\dlog \colon K_{n-1}^M(K)\to \Omega_K^{n-1}; \set{\xi_1,\ldots,\xi_{n-1}}_K \mapsto \dlog \xi_1\cdots \dlog \xi_{n-1}.
	\]
	By \autoref{prop:BT},
%	\[
%	\set{\beta_1,\ldots,\beta_{n-1}}_K = \sum_i \set{\beta^{(i)},y_2^{(i)},\ldots,y_{n-1}^{(i)}}_K
%	\]
%	for some $\beta^{(i)}\in K^{\times}$ 
%	and $y_2^{(i)},\ldots,y_{n-1}^{(i)}\in k^{\times}$.
%	\[
%	\omega = \alpha \alpha^{-1}\omega =  \alpha \dlog(\set{\beta_1,\ldots,\beta_{n-1}}_K) 
%	= \sum_i \alpha\dlog \beta^{(i)}\dlog y_2^{(i)} \cdots \dlog y_{n-1}^{(i)}.
%	\]
%	}
	the given $\omega$ is a sum of differentials of the form 
	$\alpha \dlog \beta \dlog y_2 \cdots \dlog y_{n-1}$ for some $\beta\in K^{\times}$ 
	and $y_2,\ldots,y_{n-1}\in k^{\times}$.
	Therefore, the assertion is reduced to the case $n=2$. 
	Namely, it is enough to show that any non-trivial differential form
	$\omega = \alpha \dlog \beta \in {\blue \Omega_K^1/\mathrm{d}K}$ with 
	$\alpha \in K, \beta \in K^{\times}$ is 
	a finite sum of the differentials of the forms 
	\begin{equation}
	\label{eq:O1gen}	
	\xi \dlog y \quad (\xi \in K, y\in k^{\times})\quad  \mbox{and}\quad  
	x\dlog\eta\quad (x\in k^{\times}, \eta \in K^{\times}).
	\end{equation} 
	Write $K = k(\theta) = k[T]/(\pi(T))$ using an irreducible monic polynomial $\pi(T) \in k[T]$ of degree $p$. 
	%	Write $\beta = \sum_{i=0}^{p-1}b_i\theta^i$ for some $b_i$. 
	If $\beta$ is in $k^{\times}$, there is nothing to show. 
	We assume $\beta \not\in k^{\times}$. 
	The element $\beta \in K^{\times}$ is the image of a polynomial $B(T)\in k[T]$ of degree $<p$ 
	by the evaluation map $k[T]\surj k(\theta)= k[T]/(\pi(T))$. 
	Consider the irreducible decomposition $B(T) = y\prod_{i} B_i(T)$ 
	for some $y\in k^{\times}$ and irreducible monic polynomials $B_i(T) \in k[T]$. 
	From the assumption on $k$, 
	the polynomial $B_i(T)$ which has degree $<p$ should be of the form $B_i(T) = T- b_i$ for some $b_i \in k$. 
	Therefore, we have $\beta = y\prod_i(\theta -b_i)$. 
	Write $\alpha = \sum_{j=0}^{p-1}a_j\theta^j$ with $a_j\in k$. 
	The differential form $\omega$ is further decomposed as a sum of differentials of the forms  
	\[
	\alpha \dlog y,\quad \mbox{and}\quad 
	\omega_j := a\theta^j\dlog(\theta-b)
	\]
	with $a,b\in k$.
	The former differential is of the required form \eqref{eq:O1gen}. 
	For the later $\omega_j$, 
	if $j = 0$ or $a = 0$, the assertion holds. 
	For $0 < j<p$ with $a\neq 0$, 
	put $\tau:= \theta -b$. 
	Expanding $\theta^j = (\tau+b)^j$, 
	the differential form $\omega_j$ is a sum of   
	$\omega' := y\tau^i\dlog\tau$ 
	for some $y \in k^{\times}$ and $0\le i <p$. 
	The assertion follows in the case $i=0$. 
	For $i>0$, 
	we have 
	{\blue $i\omega' = y\tau^i\dlog(\tau^i) =  - y\tau^i\dlog y$ modulo $\mathrm{d} K$, 
	because $0 = \mathrm{d}(y\tau^i) = y\tau^i \dlog (y\tau^i) = y\tau^i \dlog y + y\tau^i \dlog(\tau^i)$ in $\Omega_K^{1}/\mathrm{d}K$}.
	%and hence we obtain $\omega' = 0$. 
	For this reason, any differential $\omega = \alpha \dlog \beta$ modulo $dK$ 
	is written by the differentials of the forms given in \eqref{eq:O1gen}.
\end{proof}

\begin{thm}\label{thm:modp}
	For any $n\ge 1$, the map 
	$t_{k,p}^n\colon M_p^n(k)\isomto H_p^n(k)$ is bijective. 
\end{thm}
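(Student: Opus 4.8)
The plan is to follow the template already established for the Milnor $K$-group case in \autoref{thm:modl}, replacing the role of \autoref{prop:BT} by \autoref{prop:BTa}. Since $t_{k,p}^n$ is surjective by \autoref{prop:surj}, it suffices to produce an inverse on all of $M_p^n(k)$, and by \autoref{lem:Symb} we already have the inverse $\varphi_k$ on the subgroup $\Symb_p(k)$ with $\varphi_k\circ t_{k,p}^n=\Id$ there. So the whole content is to show that $\Symb_p(k)=M_p^n(k)$, i.e.\ every symbol $\set{\balpha,\beta_1,\ldots,\beta_{n-1}}_{K/k}$ with $K/k$ finite lies in $\Symb_p(k)$.

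First I would reduce to the case where $k=k^{(p)}$, the fixed field of a $p$-Sylow subgroup of $\Gal(k^{\sep}/k)$. The argument is identical to the one in \autoref{thm:modl}: form the commutative square relating $t_{k,p}^n$ and $t_{k^{(p)},p}^n$ via $\res_{k^{(p)}/k}$, write $(W_r\otimesM\Gm^{\otimesM(n-1)})/\wp$ at $k^{(p)}$ as the filtered colimit of its values over finite subextensions $K/k$ in $k^{(p)}$ (using \cite[Lem.~3.6.4]{Akh00}, noting $M_p^n=(\Ga\otimesM\Gm^{\otimesM(n-1)})/\wp$ is a quotient of a Mackey functor), and for any $\xi$ in the kernel of $t_{k,p}^n$ find a finite subextension $K/k$ with $\res_{K/k}(\xi)=0$; then $[K:k]\,\xi=\tr_{K/k}\res_{K/k}(\xi)=0$, and since $[K:k]$ is prime to $p$ while $M_p^n(k)$ is $p^r$-torsion (\autoref{lem:mod}), $\xi=0$. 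Hence we may assume every finite extension of $k$ has $p$-power degree, and it remains to show $\varphi_k$ is surjective, i.e.\ $\Symb_p(k)=M_p^n(k)$.

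Now take a symbol $\set{\balpha,\beta_1,\ldots,\beta_{n-1}}_{K/k}$. By a tower argument, decomposing $K/k$ into a chain of degree-$p$ extensions and inducting, I reduce to $[K:k]=p$. Here $\balpha\in W_r(K)$; by the exact sequence \eqref{eq:strH^n} we may work modulo $\wp$, so the relevant group is a quotient of $W_r\Omega_K^{n-1}$, but for the symbol-generation statement it is cleanest to run the $r=1$ (i.e.\ $\Ga$) case first, where \autoref{prop:BTa} literally applies: it says $\Omega_K^{n-1}/\mathrm{d}\Omega_K^{n-2}$ is generated by $\xi\dlog y_1\cdots\dlog y_{n-1}$ ($\xi\in K$, $y_i\in k^\times$) and by $x\dlog\eta\dlog y_2\cdots\dlog y_{n-1}$ ($x\in k$, $\eta\in K^\times$, $y_i\in k^\times$). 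For a generator of the first type, \autoref{lem:Symb}'s inverse (using \autoref{thm:Kato}) writes the corresponding class in $H_p^n(K)$, hence in $\Symb_p(K)$ after pushing forward, exactly as in the computation at the end of \autoref{thm:modl}: apply $\varphi_K$, then $\tr_{K/k}$, and use \PF\ together with the norm $N_{K/k}$ to move the $K$-entry down to $k$; for a generator of the second type one uses the projection formula in the second slot, writing $\eta$ via $N_{K/k}$. This shows $\Symb_p(k)=M_p^n(k)$ when $r=1$. For general $r$ one bootstraps: the short exact sequence $0\to W_1\to W_r\xrightarrow{R} W_{r-1}\to 0$ (via $V$) of Mackey functors, tensored with $\Gm^{\otimesM(n-1)}$ and passed to cokernels of $\wp$, gives a right-exact sequence $M_p^n\to M_{p^r}^n\to M_{p^{r-1}}^n\to 0$ compatible via $t^n_{k,p^\bullet}$ with the analogous sequence $0\to H_p^n\to H_{p^r}^n\to H_{p^{r-1}}^n\to 0$ coming from \eqref{eq:strH^n}; inducting on $r$ and using the five lemma / diagram chase (as in diagram \eqref{diag:BGK}) promotes bijectivity from $p$ to $p^r$. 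Actually, since the statement of \autoref{thm:modp} is only the $r=1$ case ($M_p^n$, $H_p^n$), the induction on $r$ is not needed here — it belongs to the proof of the main theorem — so the proof ends once $\Symb_p(k)=M_p^n(k)$ is established.

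The main obstacle, as in the $K$-theory case, is the purely inseparable aspect: \autoref{prop:BTa} is the engine, and the delicate point is verifying that each of its two families of generators, after applying $\varphi_K$ and $\tr_{K/k}$, genuinely lands in $\Symb_p(k)$ — this requires the compatibility of $\dlog$ with norms and traces (\autoref{lem:NTr}) and careful bookkeeping with \PF\ when the distinguished Witt-vector/additive slot is itself being moved, together with checking that the relations (b) and (c) of \autoref{thm:Kato} are respected throughout. The separable part of the Sylow reduction is routine; the genuinely new work over the Milnor case is that the additive slot $W_r$ (or $\Ga$) must be carried along correctly under trace, which is exactly what \autoref{prop:surj}'s well-definedness computation and \autoref{lem:Symb} have set up.
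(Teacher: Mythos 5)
Your proposal matches the paper's proof essentially step for step: the $p$-Sylow reduction to $k=k^{(p)}$ (using $\tr_{K/k}\circ\res_{K/k}=[K:k]$ prime to $p$ on the $p$-torsion group $M_p^n(k)$), the tower argument reducing to $[K:k]=p$, and then \autoref{prop:BTa} combined with \autoref{lem:Symb} over $K$, followed by $\tr_{K/k}$ and the projection formula (trace in the additive slot for the first family of generators, norm in the multiplicative slot for the second) to conclude $\Symb_p(k)=M_p^n(k)$. The brief detour through $\balpha\in W_r(K)$ and the sketched induction on $r$ is indeed superfluous here, as you yourself note --- that induction is the content of \autoref{thm:bij}, not of this theorem.
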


\begin{proof}%[Proof of \autoref{thm:modp}]
	By \autoref{lem:Symb}, 
	it is enough to show that 
	the map $t_k = t_{k,p}^n\colon M_p^n(k)\to H^n_p(k)$ is injective or 
	$\varphi_k\colon H^n_p(k) \to M_p^n(k)$ 
	which is given by $\varphi_k(\AS{a,b_1,\ldots, b_{n-1}}_k) = \set{a,b_1,\ldots, b_{n-1}}_{k/k}$ 
	is surjective. 
	First, 
	let $k^{(p)}/k$ be the field extension 
	corresponding to the $p$-Sylow subgroup of $\Gal(k^{\sep}/k)$. 
	The following diagram is commutative:
	\[
	\xymatrix{
	M_p^n(k^{(p)}) \ar[r]^-{t_{k^{(p)}}} & H^n_p(k^{(p)}) \\
	M_p^n(k)\ar[u]^{\res_{k^{(p)}/k}} \ar[r]^-{t_k} & H^n_p(k)\ar[u]_{\res_{k^{(p)}/k}}. 
	}
	\]
	Recall that 
	$H^n_p(k)$ is a quotient of $\Omega_k^{n-1}$. 
	Since the correspondence 
	$\Omega_{-}^{n-1}\colon K \mapsto \Omega_K^{n-1}$ 
	forms a Mackey functor, % (cf.~\cite[Sect.~2.6]{IR17}), 
	the commutativity of the above diagram comes from the definition of the restriction map 
	of the Mackey product $\Ga\otimesM\Gm^{\otimesM (n-1)}$.
	Furthermore, 
	for any subextension $k \subset K \subset k^{(p)}$ with $[K:k]<\infty$, 
	the extension degree $[K:k]$ is prime to $p$. 
	Therefore,  $\tr_{K/k}\circ \res_{K/k} = [K:k]$ implies that  
	$\res_{K/k}\colon M_p^n(k) \to M_p^n(K)$ is injective. 
	By $M_p^n(k^{(p)}) \simeq \varinjlim_{K}M_p^n(K)$ (for the proof, see \cite[Lem.~3.6.4]{Akh00}), 
	the left vertical map $\res_{k^{(p)}/k}$ in the above diagram is injective. 
	To show that $t_k$ is injective, take any  $\xi\in M_p^n(k)$ 
	with $t_k(\xi) = 0$. 
	If we assume that $t_{k^{(p)}}$ is bijective, 
	we have $\res_{k^{(p)}/k}(\xi) = 0$ in $M_p^n(k)$ 
	and hence $\xi = 0$. 
	%It is enough to show that $t_{k^{(p)}}$ is injective. 
	Thus, we may assume $k = k^{(p)}$. 
	In particular,  
	every finite extension of $k$ is of degree $p^r$ for some $r\ge 0$. 
	
	We show that the map $\varphi_k$ is surjective 
	by proving the equality 
	$M_p^n(k) = \Symb_p(k)$. 
	Take any non-zero symbol $\set{\alpha,\beta_1,\ldots,\beta_{n-1}}_{K/k}$ 
	in $M_p^n(k)$ 
	with $[K:k]= p^r$. 
	There exists a tower of fields 
	$k = k_0 \subset k_1 \subset k_2 \subset \cdots \subset k_r= K$
	such that $[k_i:k_{i-1}] = p$. %(cf. Akhtar, thesis, Prop.~2.2.5)
	By induction on $r$, 
	we may assume $[K:k] = p$. 
	By \autoref{prop:BTa}, the symbol  $\AS{\alpha, \beta_1, \ldots ,\beta_{n-1}}_K 
	= \alpha\dlog \beta_1\cdots \dlog \beta_{n-1}$ 
	is written as a sum of elements of the forms  
	\begin{align*}	
	&\AS{\xi, y_1,\ldots,y_{n-1}}_K\quad \mbox{for some $\xi \in K,$ $y_1,\ldots, y_{n-1}\in k^{\times}$}, and \\
	&\AS{x, \eta,  y_2,\ldots,  y_{n-1}}_K\quad \mbox{for some $x\in k, y_2,\ldots,y_{n-1} \in k^{\times}$, 
	$\eta\in K^{\times}$}. 
	\end{align*}
	
	From \autoref{lem:Symb}, 
	the symbol 
	$\set{\alpha,\beta_1,\ldots,\beta_{n-1}}_{K/K}$ 
	is a sum of 
	symbols of the forms 
	$\set{\xi,  y_1,\ldots,  y_{n-1}}_{K/K}$  
	and $\set{ x, \eta,  y_2,\ldots,  y_{n-1}}_{K/K}$.
	Hence, 
	\[
	\set{\alpha,\beta_1,\ldots,\beta_{n-1}}_{K/k} 
	= \tr_{K/k}(\set{\alpha,\beta_1,\ldots,\beta_{n-1}}_{K/K})
	\] 
	is a sum of symbols of the form
	$\set{\xi,  y_1,\ldots,  y_{n-1}}_{K/k}$ and 
	$\set{ x, \eta,  y_2,\ldots,  y_{n-1}}_{K/k}$. 
	By the projection formula \PF, we have 
	\begin{align*}
	\set{\xi,  y_1,\ldots,  y_{n-1}}_{K/k} &= 
	\set{\Tr_{K/k}(\xi), y_1,\ldots, y_{n-1}}_{k/k}, \\
	\set{x, \eta,  y_2,\ldots,  y_{n-1}}_{K/k} &= 
	\set{x, N_{K/k}(\eta), y_2,\ldots, y_{n-1}}_{k/k}.
	\end{align*}
	Because of this, 
	$\set{\alpha,\beta_1,\ldots,\beta_{n-1}}_{K/k}$ 
	is in $\Symb_p(k)$ 
	and the map $\varphi_k$ is surjective. 
\end{proof}

\begin{thm}
\label{thm:bij}
	Let $k$ be an arbitrary field of characteristic $p>0$. 
	For $r\ge 1$ and $n\ge 1$, 
	the map 
	$t_{k,p^r}^n\colon M_{p^r}^n(k) \isomto H^n_{p^r}(k)$
	is bijective. 
\end{thm}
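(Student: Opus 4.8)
The plan is to reduce \autoref{thm:bij} to the already-established case $r=1$ (\autoref{thm:modp}) by a d\'evissage on $r$, exactly mirroring the structure of the proof of \autoref{thm:Kahn} in case (b). First I would observe that by \autoref{prop:surj} the map $t_{k,p^r}^n$ is surjective for every $r$, so the issue is injectivity. The key input is a commutative diagram with exact rows
\[
\xymatrix{
M_p^n(k) \ar[r]\ar@{->>}[d]^{t_{k,p}^n} & M_{p^{r+1}}^n(k) \ar[r]\ar@{->>}[d]^{t_{k,p^{r+1}}^n} & M_{p^r}^n(k) \ar[r]\ar@{->>}[d]^{t_{k,p^r}^n} & 0\\
0\ar[r] & H^n_p(k) \ar[r] & H^n_{p^{r+1}}(k) \ar[r] & H^n_{p^r}(k) \ar[r] & 0,
}
\]
after which a diagram chase (the snake/five-lemma argument used in \eqref{diag:BGK}) shows that if $t_{k,p}^n$ is bijective and $t_{k,p^r}^n$ is bijective, then $t_{k,p^{r+1}}^n$ is bijective; induction on $r$ then finishes, the base case $r=1$ being \autoref{thm:modp}.

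The main work is establishing exactness of the two rows. For the bottom row, the relevant short exact sequence $0\to W_r\Omega^{n-1}_{\log}\xrightarrow{\underline{p}\text{ or }V} W_{r+1}\Omega^{n-1}_{\log}\xrightarrow{R^{r}} W_r\Omega^{n-1}_{\log}\to 0$ of \'etale sheaves (Illusie, or \cite[Sect.~A.1.3]{Izh00}) yields, upon taking $H^1(k,-)$ and using $H^0(k,W_r\Omega^{n-1}_{\log})=W_r\Omega^{n-1}_{k,\log}\simeq K^M_{n-1}(k)/p^r$ together with the differential symbol map (\autoref{thm:BKG}), the exact sequence
\[
K^M_{n-1}(k)/p^r \to H^n_p(k)\to H^n_{p^{r+1}}(k)\to H^n_{p^r}(k)\to 0,
\]
and one checks the first map is zero (or incorporate its image via a symbol map as in the diagram in \eqref{diag:BGK}) so that in fact $0\to H^n_p(k)\to H^n_{p^{r+1}}(k)\to H^n_{p^r}(k)\to 0$ is exact. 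For the top row, I would use \autoref{lem:mod}: $M_{p^r}^n(k)=(W_r/\wp\otimesM\Gm^{\otimesM(n-1)})(k)$, and the exact sequence of Mackey functors $W_1\xrightarrow{V} W_{r+1}\xrightarrow{R} W_r\to 0$ passes to the $\wp$-cokernels and, since $-\otimesM\Gm^{\otimesM(n-1)}$ is right exact (\autoref{rem:tensor} and the right exactness used in \autoref{lem:mod}), gives right-exactness $M_p^n(k)\to M_{p^{r+1}}^n(k)\to M_{p^r}^n(k)\to 0$; that is all the top row needs to be since the diagram chase only uses surjectivity of $t_{k,p}^n$ and $t_{k,p^r}^n$ together with exactness of the bottom row.

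Concretely, the five-lemma-style chase runs as follows: let $\xi\in M_{p^{r+1}}^n(k)$ with $t_{k,p^{r+1}}^n(\xi)=0$. Its image $\bar\xi\in M_{p^r}^n(k)$ satisfies $t_{k,p^r}^n(\bar\xi)=0$, so $\bar\xi=0$ by the inductive hypothesis; hence $\xi$ comes from some $\zeta\in M_p^n(k)$. Then $t_{k,p}^n(\zeta)$ maps to $t_{k,p^{r+1}}^n(\xi)=0$ in $H^n_{p^{r+1}}(k)$, and since $H^n_p(k)\to H^n_{p^{r+1}}(k)$ is injective, $t_{k,p}^n(\zeta)=0$, whence $\zeta=0$ by \autoref{thm:modp}, so $\xi=0$. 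The main obstacle I anticipate is the compatibility bookkeeping: verifying that the transition maps $R,V$ (equivalently $\underline{p}$) on $W_r$ induce the displayed horizontal maps on both $M_{p^r}^n$ and $H^n_{p^r}$ \emph{compatibly} with $t^n_{k,p^r}$ — i.e.\ that the squares in the big diagram genuinely commute — and handling the possibly nonzero connecting map $K^M_{n-1}(k)/p^r\to H^n_p(k)$ in the bottom row cleanly (either by showing it vanishes, using that it factors through multiplication by $p$, or by enlarging the diagram with a matching top-left term built from $\Gm^{\otimesM(n-1)}(k)/p$ as was done in case (a) of \autoref{thm:Kahn}).
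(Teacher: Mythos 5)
Your proposal is correct and follows essentially the same route as the paper: induction on $r$ with base case \autoref{thm:modp}, a d\'evissage diagram obtained by tensoring an exact sequence of Witt-vector Mackey functors with $\Gm^{\otimesM(n-1)}$ and comparing it with the long exact cohomology sequence of logarithmic de Rham--Witt sheaves, the connecting map being killed via the Bloch--Gabber--Kato identification of the relevant $H^{n-1}(k,\Z/p^s(n-1))$ with $K_{n-1}^M(k)/p^s$ so that the reduction map is surjective. The only (immaterial) difference is that you peel off $W_1$ as the subobject via $0\to W_1\xrightarrow{V^r}W_{r+1}\xrightarrow{R}W_r\to 0$, whereas the paper uses $0\to W_r\xrightarrow{V}W_{r+1}\to \Ga\to 0$, so the roles of the inductive hypothesis and of the case $r=1$ are interchanged in the diagram chase.
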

\begin{proof}
	We show the assertion by induction on $r$. 	
	The case $r=1$ follows from \autoref{thm:modp}. 
	For $r\ge 1$, we assume that $t_{k,p^r}^n$ is bijective. 
	The short exact sequence $0 \to W_r \to W_{r+1} \to  \Ga \to 0$ 
	% $0 \to W_r \xrightarrow{V} W_{r+1} \xrightarrow{R} \Ga \to 0$ $V(x_0,x_1,\ldots ,x_r) = (0,x_0,x_1,\ldots), R(x_0,x_1,\ldots ) = x_0$. 
	induces a commutative diagram with exact rows: 
	\[
	\xymatrix{
	(W_r\otimesM\Gm^{\otimesM (n-1)})(k) \ar[r]\ar[d]^{\wp} & (W_{r+1}\otimesM\Gm^{\otimesM (n-1)})(k)\ar[r]\ar[d]^{\wp} &  (\Ga \otimesM\Gm^{\otimesM (n-1)})(k)\ar[r]\ar[d]^{\wp} & 0\,\\
	(W_r\otimesM\Gm^{\otimesM (n-1)})(k) \ar[r] & (W_{r+1}\otimesM\Gm^{\otimesM (n-1)})(k)\ar[r] &  (\Ga \otimesM\Gm^{\otimesM (n-1)})(k)\ar[r] & 0.
	}
	\]
	%To simplify the notation, we put 
	%$t_{p^r} := t_{k,p^r}^n\colon M_{p^r}^n(k) \to H_{p^r}^n(k)$. 
	The diagram above induces  the following commutative diagram:
	\begin{equation}\label{eq:Mnpr}
		\vcenter{
		\xymatrix{
		  & M_{p^r}^n(k)\ar[d]^{t_{p^r}^n}_{\simeq} \ar[r] & M_{p^{r+1}}^n(k) \ar[r]\ar@{->>}[d]^{t_{p^{r+1}}^n} & M_{p}^n(k)\,\ar[d]^{t_{p}^n}_{\simeq}  \\
		 H^{n-1}(k,\Z/p(n-1))\ar[r]^-{\delta} & H_{p^r}^n(k) \ar[r] & H_{p^{r+1}}^n(k) \ar[r] & H^n_{p}(k).
		}}
	\end{equation}
	Here, 
	the bottom sequence is a part of the long exact sequence induced from 
	the short exact sequence 
	\[
	0 \to W_r\Omega_{k^{\sep},\log}^{n-1} \to W_{r+1}\Omega_{k^{\sep},\log}^{n-1} \to \Omega_{k^{\sep},\log}^{n-1} \to 0
	\]
	(\cite[Chap.~I, Thm.~5.7.2]{Ill79}, {\blue \cite[Prop.~2.12]{Shi07}}). % Kato-Saito, UCFT of arith surf p. 250
	By the inductive hypothesis, the vertical maps $t_{k,p^r}^n$ and $t_{k,p}^n$ are bijective.
	The lower horizontal sequence in the above diagram \eqref{eq:Mnpr} 
	continues further to the left as 
%	\[
%	H^{n-1}(k,\Z/p(n-1))\xrightarrow{\delta}  H_{p^r}^n(k) \to H_{p^{r+1}}^n(k) \to  H^n_{p}(k)
%	\]
%	Now, we consider the commutative diagram
	\[
	\xymatrix{
	K_{n-1}^M(k)/p^{r+1} \ar[d]^{s_{k,p^{r+1}}^{n-1}}_{\simeq}\ar@{->>}[r]^{\bmod p} & K_{n-1}^M(k)/p\ar[d]^{s_{k,p}^{n-1}}_{\simeq}  \\
	H^{n-1}(k,\Z/p^{r+1}(n-1)) \ar[r] & H^{n-1}(k,\Z/p(n-1))\ar[r]^-{\delta} & H^{n}_{p^r}(k).
	}
	\]
	By \autoref{thm:BKG}, the vertical maps in the above diagram are bijective 
	and hence $\delta$ is the $0$ map. 
	Thus, $t_{k,p^{r+1}}^n$ is injective by the diagram chase in \eqref{eq:Mnpr}. 
\end{proof}

For a truncation set $S$, 
we have a homomorphism $\wp = \WS(\phi) -\Id\colon \WS(K)\to \WS(K)$  
and this induces a map 
{\blue $\wp \colon \W_S\otimesM \Gm^{\otimesM (n-1)}\to \WS \otimesM \Gm^{\otimesM (n-1)}$ 
of Mackey functors} as in \eqref{eq:wpM}. 
We denote by 
$M_S^n := (\W_S\otimesM \Gm^{\otimesM (n-1)})/\wp$ the cokernel of $\wp$. 
When $S$ is finite,  
{\blue we have $\WS(K) \simeq \prod_{(m,p)=1}W_{r_m}(K)$ 
	for any field extension $K/k$  
	as in \eqref{eq:dec}. 
	This gives the decomposition $\WS = \bigoplus_{(m,p)=1}W_{r_m}$ 
	as Mackey functors.}
Since the Mackey product commutes with the direct sum, 
we have 
\begin{equation}\label{eq:decM}
	M_S^n(k) \simeq \prod_{(m,p)=1} M_{p^{r_m}}^n(k).
\end{equation}
According to \eqref{eq:strH^n}, we define 
\[
H^n_S(k) := \Coker\left(\WSOmega_k^{n-1}\stackrel{\eqref{eq:dec}}{\simeq} \prod_{(m,p) = 1} W_{r_m}\Omega_k^n \xrightarrow{\prod \wp}  \prod_{(m,p) = 1}W_{r_m}\Omega_k^{n-1}/ \mathrm{d}V^{r_m-1} \Omega_k^{n-2}\right).
\]
\autoref{thm:bij} extends to any finite truncation set as follows:

\begin{cor}
\label{cor:S0}
	Let $k$ be a field of characteristic $p>0$. 
	For any finite truncation set $S$ and $n\ge 1$, we have an isomorphism 
	$M_S^n(k) \simeq H^n_S(k)$.
\end{cor}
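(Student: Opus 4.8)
The plan is to deduce the corollary from \autoref{thm:bij} by combining the two product decompositions already in place. First I would record that, for a finite truncation set $S$, the canonical decomposition $\WS(K) \simeq \prod_{(m,p)=1} W_{r_m}(K)$ of \eqref{eq:dec} is compatible with the Frobenius $\phi$ (equivalently with $F$), so that $\wp = \WS(\phi)-\Id$ acts diagonally on the product; together with the fact that $-\otimesM\Gm^{\otimesM(n-1)}$ commutes with direct sums (\autoref{rem:tensor}) and is right exact, this is exactly what gives the isomorphism \eqref{eq:decM}, namely $M_S^n(k)\simeq \prod_{(m,p)=1}M_{p^{r_m}}^n(k)$.

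Next I would unwind the definition of $H^n_S(k)$. By construction it is the cokernel of $\prod_{(m,p)=1}\wp$ on $\prod_{(m,p)=1}W_{r_m}\Omega_k^{n-1}/\mathrm{d}V^{r_m-1}\Omega_k^{n-2}$; since the decomposition \eqref{eq:dec} of $\WSOmega_k^{\bullet}$ is one of differential graded algebras preserved by $F$, the operator $\wp$ is again diagonal, and because cokernels commute with finite products, comparing with the four-term exact sequence \eqref{eq:strH^n} for each length $r_m$ yields $H^n_S(k)\simeq \prod_{(m,p)=1}H^n_{p^{r_m}}(k)$, indexed by the same set.

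Finally I would invoke \autoref{thm:bij}: for each $m$ prime to $p$ the map $t^n_{k,p^{r_m}}\colon M^n_{p^{r_m}}(k)\isomto H^n_{p^{r_m}}(k)$ is an isomorphism. Taking the product of these maps over all such $m$ and conjugating by the two identifications above produces the asserted isomorphism $M_S^n(k)\simeq H^n_S(k)$; it is induced, componentwise, by $\ba\otimes b_1\otimes\cdots\otimes b_{n-1}\mapsto \Tr_{K/k}(\ba\,\dlog[b_1]\cdots\dlog[b_{n-1}])$, i.e.\ by the evident truncation-set analogue of $t^n_{k,p^r}$.

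The one point that genuinely requires care — which I expect to be the main obstacle — is the compatibility of the decomposition \eqref{eq:dec} with the operator $\wp$: one must check that the $p$-typical splitting of the Mackey functor $\WS$ used in \eqref{eq:decM} is the same as the splitting of $\WSOmega_k^{n-1}$ (and of the quotient $\WSOmega_k^{n-1}/\mathrm{d}V^{r_m-1}\Omega_k^{n-2}$) entering the definition of $H^n_S(k)$, and that $F$, hence $\wp$, preserves each summand. This is essentially \cite[Thm.~1.11]{Rue07} applied to the relevant structure maps; once it is in place, the corollary is a formal consequence of \autoref{thm:bij}.
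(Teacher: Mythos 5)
Your argument is correct and is essentially the paper's own: the paper proves \autoref{cor:S0} implicitly via the decomposition \eqref{eq:decM} of $M_S^n(k)$ into $p$-typical pieces, the corresponding product decomposition built into the very definition of $H^n_S(k)$ via \eqref{eq:dec}, and a componentwise application of \autoref{thm:bij}. Your additional remark about checking that $F$, hence $\wp$, respects the splitting of \cite[Thm.~1.11]{Rue07} is exactly the compatibility the paper relies on when it writes the middle map as $\prod\wp$.
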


\begin{cor}
	Let $k$ be a field of characteristic $p>0$. 
	For a truncation set $S$ and $n\ge 1$, we have an isomorphism 
	\[
	M_S^n(k) \simeq \varprojlim_{S_0\subset S} H^n_{S_0}(k),
	\] 	
	where $S_0$ runs the set of all finite truncation set contained in $S$.
\end{cor}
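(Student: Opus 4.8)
The plan is to deduce the statement from the finite case, \autoref{cor:S0}, by a Mittag--Leffler argument. For each finite truncation set $S_0\subseteq S$ the restriction on Witt vectors is a morphism of Mackey functors $R^S_{S_0}\colon\W_S\to\W_{S_0}$ (cf.~\eqref{eq:RST}), inducing $M^n_S\to M^n_{S_0}$; composing with the isomorphism $M^n_{S_0}(k)\simeq H^n_{S_0}(k)$ of \autoref{cor:S0} yields a compatible system and hence a canonical homomorphism $M^n_S(k)\to\varprojlim_{S_0}H^n_{S_0}(k)$. By \autoref{cor:S0} it is enough to prove that the canonical map $M^n_S(k)\to\varprojlim_{S_0}M^n_{S_0}(k)$ is bijective. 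I would also note at the outset that the poset of finite truncation subsets of $S$ admits a cofinal tower (enumerate $S\subseteq\mathbb N$ and take the truncation set generated by the first $N$ elements), so that all the $\varprojlim/\varprojlim^1$ manipulations below take place over an ordinary tower.

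Next I would reduce to a statement purely about $\W_S$. By the right exactness of the Mackey product (cf.~\autoref{lem:mod}) we have $M^n_S=(\W_S/\wp)\otimesM\Gm^{\otimesM(n-1)}$ and $M^n_{S_0}=(\W_{S_0}/\wp)\otimesM\Gm^{\otimesM(n-1)}$. The transition maps of $(\W_{S_0})_{S_0\subseteq S}$ are surjective at every field, so $\W_S=\varprojlim_{S_0}\W_{S_0}$ and $\varprojlim^1_{S_0}\W_{S_0}=0$ at each field. Applying $\varprojlim_{S_0}$ to the four term exact sequence $0\to\Ker\wp_{S_0}\to\W_{S_0}\xrightarrow{\wp}\W_{S_0}\to\W_{S_0}/\wp\to0$, and using that by the Artin--Schreier--Witt sequence \eqref{eq:ASW} together with the decomposition \eqref{eq:dec} the group $\Ker(\wp\colon\W_{S_0}(K)\to\W_{S_0}(K))$ is a finite product of cyclic groups $\Z/p^{r_m}$ whose transition maps are surjective (hence $\varprojlim^1=0$), one concludes $\W_S/\wp=\varprojlim_{S_0}\W_{S_0}/\wp$ as Mackey functors.

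The remaining, and I expect hardest, step is to interchange $\varprojlim_{S_0}$ with the Mackey product, that is, to show that $\big(\varprojlim_{S_0}\W_{S_0}/\wp\big)\otimesM\Gm^{\otimesM(n-1)}\to\varprojlim_{S_0}\big((\W_{S_0}/\wp)\otimesM\Gm^{\otimesM(n-1)}\big)$ becomes an isomorphism after evaluation at $k$. This is not formal: the Mackey product is assembled from a direct sum over finite subfields, tensor products over $\Z$, and a quotient, none of which commutes with inverse limits in general. I would attack it through the explicit presentations $M^n_{S_0}(k)=F_{S_0}/R_{S_0}$, where $F_{S_0}=\bigoplus_{K/k\ \mathrm{finite}}(\W_{S_0}/\wp)(K)\otimesZ(K^\times)^{\otimesZ(n-1)}$ and $R_{S_0}$ is the subgroup of projection formula relations $(\mathbf{PF})$; surjectivity of the Witt coordinate restrictions shows that the towers $(F_{S_0})$ and $(R_{S_0})$ have surjective transition maps (a $(\mathbf{PF})$-relation over $S_0$ lifts to one over any larger $S_0'$), whence $\varprojlim^1_{S_0}F_{S_0}=\varprojlim^1_{S_0}R_{S_0}=0$ and $\varprojlim_{S_0}M^n_{S_0}(k)=\varprojlim_{S_0}F_{S_0}\big/\varprojlim_{S_0}R_{S_0}$. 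One then checks that the natural map from $M^n_S(k)$ is injective, because a family of relations compatible under restriction comes from a single relation over $\W_S=\varprojlim_{S_0}\W_{S_0}$, and surjective, by lifting a compatible family of symbols one level at a time, the obstruction at each stage being killed by $\varprojlim^1_{S_0}R_{S_0}=0$; to make the lifting concrete I would first pass, exactly as in the proof of \autoref{thm:modp}, to $k=k^{(p)}$ (using $M^n_S(k^{(p)})=\varinjlim_K M^n_S(K)$ and the injectivity of $\res_{K/k}$ when $[K:k]$ is prime to $p$), so that every symbol may be taken over the base field and the ring $\W_S(k)=\varprojlim_{S_0}\W_{S_0}(k)$ supplies the required compatible Witt vectors. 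Alternatively, one can work on the de Rham--Witt side: since $\WSOmega_k^{n-1}=\varprojlim_{S_0}\W_{S_0}\Omega_k^{n-1}$ by \eqref{eq:WSO}, the same Mittag--Leffler bookkeeping applied to \eqref{eq:strH^n} (all restrictions $R$ are surjective, and the $H^{n-1}$-terms transition surjectively by Bloch--Gabber--Kato, \autoref{thm:BKG}) identifies $H^n_S(k)$ with $\varprojlim_{S_0}H^n_{S_0}(k)$ directly, reducing the corollary to the isomorphism $M^n_S(k)\simeq H^n_S(k)$, i.e.\ to \autoref{thm:bij} for an arbitrary truncation set.
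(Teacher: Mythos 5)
Your proposal is correct and follows essentially the same route as the paper: reduce to the finite case (\autoref{cor:S0}) and pass to the limit using the surjectivity of the restriction maps $R^{S_0}_{S_0'}$ on Witt vectors and the resulting Mittag--Leffler/$\varprojlim^1$ vanishing. The paper's own proof simply asserts the isomorphism $(\W_S\otimesM\Gm^{\otimesM(n-1)})(k)/\wp\simeq\varprojlim_{S_0}(\W_{S_0}\otimesM\Gm^{\otimesM(n-1)})(k)/\wp$ from the Mittag--Leffler condition, so the extra care you take in justifying the interchange of $\varprojlim_{S_0}$ with the Mackey product goes beyond, but is consistent with, what the paper records.
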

\begin{proof}
	By the definition, we have $\W_S(K) = \varprojlim_{S_0}\W_{S_0}(K)$. 
	Since the restriction map $R_{S_0'}^{S_0}\colon \W_{S_0}(K)\to \W_{S_0'}(K)$ 
	is surjective, for any finite truncation sets $S_0'\subset S_0$, 
	the Mittag-Leffler condition holds. 
	We obtain 
	$(\W_S\otimesM \Gm^{\otimesM (n-1)})(k)/\wp \simeq  \varprojlim_{S_0}(\W_{S_0}\otimesM \Gm^{\otimesM (n-1)})(k)/\wp$. 
	The assertion follows from 
	\autoref{cor:S0}.
\end{proof}

\begin{cor}
	Let $k$ be a field of characteristic $p>0$. 
	For any finite truncation set $S$ and $n\ge 1$, we have an isomorphism
	\[
	(\WS\otimesM K_{n-1}^M)(k)/\wp \simeq H^n_S(k),
	\]
	where 
	$\wp\colon (\WS\otimesM K_{n-1}^M)(k) \to (\WS\otimesM K_{n-1}^M)(k)$ is
	given by 
	$\wp(\set{\mathbf{a},\mathbf{b}}_{K/k}) = \set{\wp(\mathbf{a}),\mathbf{b}}_{K/k}$. 
\end{cor}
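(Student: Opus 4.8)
The plan is to reduce to \autoref{cor:S0}. That corollary already identifies $M_S^n(k)=(\WS\otimesM\Gm^{\otimesM(n-1)})(k)/\wp$ with $H^n_S(k)$, so it is enough to produce a natural isomorphism $(\WS\otimesM K_{n-1}^M)(k)/\wp\simeq M_S^n(k)$ that is compatible with the two copies of $\wp$ (both are $\wp\otimesM\Id$). The engine is Kahn's theorem (\autoref{thm:Kahn}) in its Mackey-functor form: after reducing modulo a fixed $p$-power the Mackey functors $\Gm^{\otimesM(n-1)}$ and $K_{n-1}^M$ become isomorphic, and tensoring against $\WS/\wp$ cannot see such a reduction because $\WS$ is uniformly $p$-power torsion when $S$ is finite.

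Concretely I would proceed in three steps. First, the right exactness of $-\otimesM\M$ applied to $\WS\xrightarrow{\wp}\WS\to\WS/\wp\to 0$ gives, for any Mackey functor $\M$ over $k$, a natural isomorphism $(\WS\otimesM\M)/\wp\simeq(\WS/\wp)\otimesM\M$, exactly as in the proof of \autoref{lem:mod}. Second, as $S$ is finite there is an integer $N$ with $p^N\WS(K)=0$ for all $K\in\Ext_k$ (e.g.\ $N=\max_m r_m$ in the notation of \eqref{eq:dec}), hence $\WS/\wp$ is $p^N$-torsion and the argument of \autoref{lem:mod} yields $(\WS/\wp)\otimesM\M\simeq(\WS/\wp)\otimesM(\M/p^N)$ for every $\M$; alternatively one splits $\WS=\bigoplus_{(m,p)=1}W_{r_m}$, uses \eqref{eq:decM}, and quotes \autoref{lem:mod} in each $p$-typical factor. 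Third, I invoke the isomorphism of Mackey functors $\Gm^{\otimesM(n-1)}/p^N\simeq K_{n-1}^M/p^N$ from \autoref{thm:Kahn} and tensor it with $\WS/\wp$.

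Combining the three steps produces the chain
\[
(\WS\otimesM K_{n-1}^M)(k)/\wp\simeq\big((\WS/\wp)\otimesM K_{n-1}^M/p^N\big)(k)\simeq\big((\WS/\wp)\otimesM\Gm^{\otimesM(n-1)}/p^N\big)(k)\simeq(\WS\otimesM\Gm^{\otimesM(n-1)})(k)/\wp=M_S^n(k),
\]
and composing with the isomorphism of \autoref{cor:S0} gives the assertion; tracking $\set{\mathbf a,\mathbf b}_{K/k}$ through the identifications shows the resulting map is given by the trace formula \eqref{def:tnpr}, as for $t_{k,p^r}^n$. I do not anticipate a real obstacle: the substance lies in \autoref{thm:Kahn} and \autoref{cor:S0}, and the only point needing a little care is the passage from the $p$-typical \autoref{lem:mod} to a general finite truncation set, i.e.\ the uniform $p$-power torsion of $\WS$ (equivalently, the reduction to the $p$-typical case via \eqref{eq:decM}).
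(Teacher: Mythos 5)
Your proposal is correct and follows essentially the same route as the paper: the paper's proof decomposes $\WS=\bigoplus_{(m,p)=1}W_{r_m}$ as Mackey functors, applies \autoref{lem:mod} in each $p$-typical factor to replace $K_{n-1}^M$ by $K_{n-1}^M/p^{r_m}$, swaps this for $\Gm^{\otimesM(n-1)}/p^{r_m}$ via the Mackey-functor form of Kahn's theorem, reverses \autoref{lem:mod}, and concludes with \autoref{cor:S0} --- which is exactly your ``alternative'' second step and your third step. Your uniform $p^N$-torsion variant is an equivalent minor repackaging, so there is nothing substantive to flag.
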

\begin{proof}
	By the decomposition $\WS = \bigoplus_{(m,p)=1}W_{r_m}$ 
	as Mackey functors, we have 
	\begin{align*}
	(\WS\otimesM K_{n-1}^M)(k)/\wp  &\simeq \bigoplus_{(p,m)=1} (W_{r_m} \otimesM K_{n-1}^M)(k)/\wp\\
	&\simeq \bigoplus_{(m,p)=1} (W_{r_m}/\wp \otimesM K_{n-1}^M/p^{r_m})(k)\quad (\mbox{by \autoref{lem:mod}})\\
	&\simeq \bigoplus_{(m,p)=1} (W_{r_m}/\wp \otimesM \Gm^{\otimesM (n-1)}/p^{r_m})(k)\quad (\mbox{by \autoref{thm:bij}})\\
	&\simeq \bigoplus_{(m,p)=1} (W_{r_m} \otimesM \Gm^{\otimesM (n-1)})(k)/\wp\quad (\mbox{by \autoref{lem:mod}})\\
	&= M^n_S(k).
	\end{align*}
	\autoref{cor:S0} gives the assertion.	
\end{proof}

%\bibliographystyle{amsalpha}
%\bibliography{bound}

\def\cprime{$'$}
\providecommand{\bysame}{\leavevmode\hbox to3em{\hrulefill}\thinspace}
\providecommand{\href}[2]{#2}

\end{document}